\newcommand{\COLORON}{0}
\newcommand{\NOTESON}{0}
\newcommand{\Debug}{0}
\newcommand{\comment}[1]{}
\newcommand{\COMMENT}[1]{}
\definecolor{darkgray}{rgb}{0.3,0.3,0.3}
\newcommand{\defi}[1]{{\color{darkgray}\emph{#1}}}
\newcommand{\acknowledgement}{\section*{Acknowledgement}}
\newtheorem{proposition}{Proposition}[section]
\newtheorem{definition}[proposition]{Definition}
\newtheorem{theorem}[proposition]{Theorem}
\newtheorem{corollary}[proposition]{Corollary}
\newtheorem{lemma}[proposition]{Lemma}
\newtheorem{examp}[proposition]{Example}
\newcommand{\FIG}{0}
\newcommand{\note}[1]{ 

	\ 

	{\color{blue} \hspace*{-60pt} NOTE: \color{Turquoise}{\small  \tt \begin{minipage}[c]{1.1\textwidth}  #1 \end{minipage} \ignorespacesafterend }} 
	
	\ 
	
	}
\else \newcommand{\note}[1]{} \fi
\newcommand{\afsubm}[1]{ \ifnum \Debug = 1 {\mymargin{#1}}
\fi} 
\newcommand{\sss}{\ensuremath{\color{red} \bowtie \bowtie \bowtie\ }}
\else \newcommand{\sss}{} \fi
\newcommand{\fig}[1]{Figure ``{#1}''}
\else \newcommand{\fig}[1]{Figure~\ref{#1}} \fi
\renewcommand{\color}[1]{}
\newcommand{\showFig}[2]{
   \begin{figure}[htbp]
   \centering
   \noindent
   \epsfbox{#1.eps}
   \caption{\small #2}
   \label{#1}
   \end{figure}
}
\newcommand{\N}{\ensuremath{\mathds N}}
\newcommand{\R}{\ensuremath{\mathds R}}
\newcommand{\cc}{\ensuremath{\mathcal C}}
\newcommand{\cf}{\ensuremath{\mathcal F}}
\newcommand{\ch}{\ensuremath{\mathcal H}}
\newcommand{\ck}{\ensuremath{\mathcal K}}
\newcommand{\cm}{\ensuremath{\mathcal M}}
\newcommand{\cp}{\ensuremath{\mathcal P}}
\newcommand{\oo}{\ensuremath{\omega}}
\newcommand{\bet}{\ensuremath{\beta}}
\newcommand{\del}{\ensuremath{\delta}}
\newcommand{\eps}{\ensuremath{\epsilon}}
\newcommand{\fcg}{\ensuremath{|G|}}
\newcommand{\sm}{\backslash}
\newcommand{\sgl}[1]{\ensuremath{\{#1\}}}
\newcommand{\pth}[2]{\ensuremath{#1}\text{--}\ensuremath{#2}~path}
\newcommand{\arc}[2]{\ensuremath{#1}\text{--}\ensuremath{#2}~arc}
\newcommand{\seq}[1]{\ensuremath{(#1_n)_{n\in\N}}} 
\newcommand{\flo}[2]{\ensuremath{#1}\text{--}\ensuremath{#2}~flow} 
\newcommand{\g}{\ensuremath{G\ }}
\newcommand{\G}{\ensuremath{G}}
\newcommand{\ceil}[1]{\ensuremath{\lceil #1 \rceil}}
\newcommand{\ltp}{\ensuremath{|G|_\ell}}
\newcommand{\lER}{\ensuremath{\ell: E(G) \to \R_{>0}}}
\newcommand{\ksl}{Kirchhoff's cycle law}
\newcommand{\cutr}{non-elusive} 
\newcommand{\Lr}[1]{Lemma~\ref{#1}}
\newcommand{\Tr}[1]{Theorem~\ref{#1}}
\newcommand{\Sr}[1]{Section~\ref{#1}}
\newcommand{\Prr}[1]{Pro\-position~\ref{#1}}
\newcommand{\Cr}[1]{Corollary~\ref{#1}}
\newcommand{\Dr}[1]{De\-fi\-nition~\ref{#1}}
\newcommand{\lf}{locally finite}
\newcommand{\bos}{basic open set}
\renewcommand{\iff}{if and only if}
\newcommand{\fe}{for every}
\newcommand{\Fe}{For every}
\newcommand{\st}{such that}
\newcommand{\ti}{there is}
\newcommand{\obda}{without loss of generality}
\newcommand{\wrt}{with respect to}
\newcommand{\labtequ}[2]{ \begin{equation} \label{#1} 	\begin{minipage}[c]{0.9\textwidth}  #2 \end{minipage} \ignorespacesafterend \end{equation} }
\newcommand{\mymargin}[1]{
  \marginpar{%
    \begin{minipage}{\marginparwidth}\small%
      \begin{flushleft}%
        {\color{blue}#1}%
      \end{flushleft}%
   \end{minipage}%
  }%
}%
\newcommand{\mySection}[2]{}
\newcommand{\gn}{\ensuremath{G_n}}
\newcommand{\gseq}{graph approximation}
\newcommand{\pe}{pseudo-edge}
\newcommand{\gl}{graph-like}
\newcommand{\Gl}{Graph-like}
\newcommand{\des}{disconnecting edge-set}
\newcommand{\Hm}{\ensuremath{\ch}}
\newcommand{\HM}{Hausdorff measure}
\newcommand{\finhm}{\ensuremath{\Hm(X)< \infty}}
\newcommand{\cls}[1]{\overline{#1}}
\newcommand{\cov}{\tau}
\newcommand{\e}[2]{\mathds{E}_{#1}\left[#2\right]}
\newcommand{\en}[2]{\mathds{E}^n_{#1}\left[#2\right]}
\newcommand{\p}[2]{\mathds{P}_{#1}\left[#2\right]}
\renewcommand{\Pr}{\mathds{P}}
\newcommand{\8}{\infty}
\newcommand{\1}[1]{{\mathds1}_{\left[#1\right]}}
\newcommand{\BM}{Brownian motion}
\newcommand{\leth}{large enough that}
\newcommand{\seth}{small enough that}
\newcommand{\smp}{strong Markov property}
\newcommand{\Ex}{\ensuremath{\mathds{E}}}
\newcommand{\Exp}[1]{\ensuremath{\mathds{E}}[#1]}
\newcommand{\readi}[1]{\medskip {\it #1} \medskip}
\title{Brownian Motion on graph-like spaces}
\author{Agelos Georgakopoulos\thanks{Supported by FWF grant P-24028-N18 and EPSRC grant EP/L002787/1.}\\ 
  {Mathematics Institute}\\
 {University of Warwick}\\
  {CV4 7AL, UK}
\and Konrad Kolesko\thanks{Supported by NCN grant DEC-2012/05/B/ST1/00692}\\Instytut Matematyczny\\ Uniwersytet Wroc\l{}awski\\ pl. Grunwaldzki 2/4\\ 50-384 Wroc\l{}aw, Poland\\
}
\begin{document}
\maketitle

\begin{abstract}
We construct \BM\ on a wide class of metric spaces similar to graphs, and show that its cover time admits an upper bound depending only on the  length of the space.
\end{abstract}

\section{Introduction}

The aim of this paper is to construct the analog of \BM\ on metric spaces that are similar to graphs in a sense made precise below, and study some of its basic properties. It turns out that, under mild conditions, \ti\ a unique stochastic process qualifying for this. 

\fig{examples} shows some example spaces on which our process can live; the numbers indicate the lengths of the corresponding arcs. 

The first one is the Hawaian earring: an infinite sequence of circles attached to a common point $p$ to which they converge. 
It might at first sight seem impossible to have a \BM\ on this space started at $p$, unless we impose some ad-hoc bias as to the probability with which each circle is chosen first. However, there need not be a `first' circle visited by a continuous path from $p$, and indeed our process will traverse infinitely many of them before moving to any distance $r>0$ from $p$. Still, each of the finitely many points at distance exactly $r$ from $p$ has the same probability to be reached first. The second example is an \R-tree of finite total length. Our \BM\ will reach the `boundary' at the top after some finite time $\tau$, and will continue its continuous path after this, almost surely visiting infinitely many boundary points in any inteval $[\tau, \tau+\eps]$. The third example is obtained from the Sierpinski gasket by replacing articulation points with arcs. This space contains a homeomorphic copy of the second example, and a subspace homotopy equivalent to the first example; our process on it is more complex, combining features of both the above.
\epsfxsize=\hsize
\showFig{examples}{Examples of \gl\ spaces.}

In all these examples, and in much greater generality indeed, our process behaves locally like standard \BM\ on a real interval $I$ on each open arc of our space isometric to $I$, its sample paths are continuous, it has the \smp, and it almost surely covers the whole space after finite time.

\medskip
We call a topological space $X$ \defi{\gl}, if it contains a set $E$ of pairwise disjoint copies of \R, called \defi{edges}, each of which is open in $X$, 
 \st\ the subspace $X\sm \bigcup E$ is  totally disconnected. This notion was introduced by Thomassen and Vella \cite{ThomassenVellaContinua}, and was motivated by recent developments in graph theory; see also \cite{gl}.

Recall that a \defi{continuum} is a compact, connected, non-empty metrizable space (some authors replace `metric' by Hausdorff). We will use $\Hm(X)$ to denote the 1-dimensional \HM\ of $X$. Although our processes can be constructed on any \gl\ continuum, for its uniqueness it is necessary to have $\Hm(X)< \infty$.

In order to construct our process, we use a result from \cite{gl} stating, roughly speaking, that every \gl\ space $X$ can be approximated by a sequence of finite graphs (i.e.\ 1-complexes) contained in $X$. Such a sequence of graphs is called a \gseq\ of $X$; see \Sr{secCon} for the precise definition. For example, any sequence \seq{G} where $G_n$ consists of finitely many of the cicles of the Hawaian earring and each circle appears in almost every $G_n$ is a \gseq. The main goal of this paper is to show that if $B_n$ denotes \BM\ on the $n$th member of any \gseq\ of $X$, then the $B_n$ converge weakly ---in the space of measures on continuous paths on $X$, see \Sr{measures}--- to a stochastic process $B$ on $X$ with all the desired properties, and this $B$ does not depend on the choice of the \gseq:

\begin{theorem} \label{thmain}
Let \g be a \gl\ continuum with $\Hm(X)< \infty$, and $o$ a point of $X$. 
Then \ti\ a stochastic process $B$ on $X$ with continuous sample paths starting at $o$, the \smp, and a stationary distribution  proportional to $\Hm$. 

Moreover, \fe\ \gseq\  \seq{G} of $X$, and every choice of points $o_n\in G_n$ \st\ $\lim o_n = o$, if $B_n$ is the standard \BM\ on $G_n$ from $o_n$, then $B_n$ converges weakly to $B$, and $B$ is unique with this property.

\end{theorem}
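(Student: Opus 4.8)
The plan is to establish existence by exhibiting $B$ as the weak limit of the processes $B_n$, and to establish uniqueness by showing that any process with the three stated properties is forced to agree with this limit on a generating family of events. First I would fix a \gseq\ \seq{G} of $X$ and basepoints $o_n \to o$. The key preliminary fact is \emph{tightness} of the family of laws $\{\mu_n\}$ of the $B_n$ in the space of probability measures on $C([0,\infty), X)$. Since $X$ is a compact metric space, tightness reduces to an equicontinuity estimate on sample paths; here one uses that $\Hm(X) < \infty$ together with the comparison of $B_n$ to standard one-dimensional \BM\ on each edge. Concretely, the expected time for $B_n$ to travel a given distance is controlled, uniformly in $n$, by a quantity depending only on the lengths appearing in $X$ (this is exactly the cover-time type bound advertised in the abstract, in local form). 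Given tightness, Prohorov's theorem yields a subsequential weak limit $\mu$; let $B$ be a process with law $\mu$. Continuity of sample paths passes to the limit because the modulus-of-continuity bound is uniform.

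Next I would verify that $B$ has the \smp\ and the correct stationary distribution. The stationary distribution is the easiest point: each $B_n$ has stationary measure proportional to $\Hm$ restricted to $G_n$ (this is the standard fact that \BM\ on a finite metric graph is reversible with respect to Lebesgue/length measure), and these measures converge weakly to $\Hm/\Hm(X)$ on $X$ because \seq{G} exhausts $X$ in the Hausdorff sense; weak convergence of the one-dimensional marginals at each fixed time then transfers stationarity to $B$. For the \smp, the natural route is to show that the finite-dimensional distributions of $B$ satisfy the Chapman--Kolmogorov consistency of a Markov family, and that the transition mechanism is spatially local; one argues that for each edge $e$ and each pair of arcs the hitting distribution and hitting-time law of $B$, obtained as limits of those of $B_n$, depend only on the geometry near the relevant portion of $X$. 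This is where one invokes the \gl\ structure: outside $\bigcup E$ the space is totally disconnected, so the motion is, in a precise sense, determined by its behaviour on the edges plus the branching rule at the (measure-zero) complement.

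For \textbf{uniqueness}, suppose $B'$ is any process on $X$ with continuous sample paths from $o$, the \smp, and stationary law proportional to $\Hm$. I would argue that these hypotheses pin down all finite-dimensional distributions. The idea is that, restricted to any finite subgraph $H \subseteq X$ that is one of the $G_n$, the trace of $B'$ on $H$ (time-changed by the local time spent off $H$) must be standard \BM\ on $H$: continuity plus the \smp\ plus the $\Hm$-stationarity force the generator of $B'$ to be the usual second-derivative operator on the interior of each edge with the Kirchhoff/Neumann-type vertex conditions that characterize metric-graph \BM, since these are exactly the conditions making length measure stationary and reversible. Letting $H$ range over the \gseq\ and using that $\bigcup_n G_n$ is dense, with $X \sm \bigcup E$ totally disconnected and $\Hm$-null, one recovers the full law of $B'$; it must coincide with $\mu$, which in particular shows $\mu$ does not depend on the subsequence chosen above, so the whole sequence $B_n$ converges to $B$, and finally that the limit is independent of the \gseq\ (any two \gseqs\ have a common refinement, or interleave to a single \gseq).

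The main obstacle I expect is the \smp\ for the limit process, and more precisely the claim that the branching behaviour at points of $X \sm \bigcup E$ is well-defined and consistent in the limit --- the Hawaian-earring phenomenon described in the introduction, where there is no ``first'' edge traversed, means one cannot simply read off transition probabilities from a single excursion but must control the joint limit of the excursion laws of $B_n$ near such a point. Handling this requires the uniform estimates from \cite{gl} on how well \gseqs\ approximate $X$ (so that the ``mass'' of edges near an accumulation point is uniformly controlled) together with a careful argument that the hitting distribution of the sphere of radius $r$ about such a point has a limit that is genuinely a probability measure (no mass escapes to ``infinitely many tiny circles''), which is again where $\Hm(X) < \infty$ is essential and where its failure would break uniqueness.
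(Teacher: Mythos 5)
There is a genuine gap, and it sits exactly where the theorem's real content lies: the uniqueness/convergence claim. You read ``unique'' as meaning that \emph{any} process with continuous paths, the \smp\ and a stationary law proportional to \Hm\ must coincide with $B$, and you propose to prove this by arguing that these hypotheses force the generator to be the second-derivative operator with Kirchhoff-type vertex conditions, and then to deduce convergence of the whole sequence $B_n$ as a corollary. But the theorem only asserts uniqueness of $B$ \emph{as a weak limit of \BM s on \gseq s} (the paper explicitly says the stronger characterisation is only suspected), and your proposed route to the stronger statement is not substantiated: on a \gl\ continuum such as the Hawaiian earring the ``vertices'' include wild points where no Walsh-spider/Kirchhoff condition is available, the claim that the time-changed trace of an arbitrary such process on a finite subgraph $G_n$ is standard \BM\ on $G_n$ is asserted rather than proved, and reversibility (which your generator argument implicitly uses) is not among the hypotheses. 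Because you route everything through this unproved characterisation, your proposal contains no actual argument for the statement that must be proved: that the laws of $B_n$ converge (not merely subsequentially) and that the limit is independent of the choice of \gseq\ and of the approximating basepoints. That is the hard analytic core of the paper, carried out by showing convergence of the probabilities $\Pr[B_n(T)\in A]$ via a chain of couplings: a pseudo-edge decomposition of $X$ with small total discrepancy, replacement of pseudo-edges by edges of length equal to effective resistance, a jump process living on a fixed finite graph $G^*$, convergence of effective resistances (which determine exit distributions), and an occupation-time bound showing the discarded excursions are short. None of these ingredients, nor any substitute for them, appears in your plan; ``any two \gseq s interleave'' is used in the paper too, but only \emph{after} convergence along a single \gseq\ has been established.

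The existence half of your plan (tightness in $C$ plus Prohorov, stationarity by passing limits through the invariant measures $\Hm\restriction G_n$) is essentially the paper's route, though the equicontinuity estimate is done there not by a cover-time bound but by the geometric fact that moving distance $\eps$ forces traversal of one of finitely many edges of length bounded below, combined with the reflection principle for one-dimensional \BM. Note also that the paper's proof of the \smp\ (Feller property of $P_t$) itself consumes the convergence $\Pr_{x_n}[B_n(t)\in A]\to\Pr_x[B(t)\in A]$ for varying basepoints $x_n\to x$, i.e.\ the output of the uniqueness section; so even your Markov-property step cannot be completed without first supplying the missing convergence argument.
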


\Tr{thmain} states that our process is unique with the property of being a weak limit of \BM s on \gseq s of $X$, but we suspect that it is unique in a stronger sense\sss.

\medskip
It was shown in \cite{edgecov} that the expected time for \BM\ on a finite, connected 1-complex \g to cover all of \g is bounded from above by a value depending only on the total length of $G$ and not on its structure. Applying this to each member of our \gseq s, we prove the corresponding result for our \BM\ on an arbitrary \gl\ continuum: 

\begin{theorem} \label{ct}
The expected cover time of the process $B$ of \Tr{thmain} is at most $20 \Hm(X)^2$.
\end{theorem}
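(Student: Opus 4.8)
The plan is to transfer the finite bound of \cite{edgecov} through the weak convergence supplied by \Tr{thmain}. Recall that \cite{edgecov} bounds the expected cover time of standard \BM\ on a finite connected $1$-complex $G$ by $20\,\ell(G)^2$, where $\ell(G)$ denotes the total length of $G$. So first I would fix a \gseq\ \seq{G} of $X$ and points $o_n\in G_n$ with $\lim o_n=o$, let $B_n$ be standard \BM\ on $G_n$ from $o_n$, and write $\cov_n:=\inf\{t\ge 0:\,B_n([0,t])\supseteq G_n\}$ for its cover time. Since $G_n\subseteq X$ and the $1$-dimensional \HM\ is monotone and assigns to a finite $1$-complex its total length, $\ell(G_n)\le\Hm(X)$, and hence $\Ex\bigl[\cov_n\bigr]\le 20\,\Hm(X)^2$ for every $n$.

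Next I would address the two difficulties: the process $B$ must cover \emph{all} of $X$, including the ($\Hm$-null, but possibly non-empty) set of points lying in no $G_n$, whereas $B_n$ covers only $G_n$; and the cover time is not a continuous functional of the path. Both are handled by approximating ``$X$ is covered'' by ``$X$ is $\eps$-covered''. For $\eps>0$ and a continuous path $w\colon[0,\infty)\to X$ set
\[
c_\eps(w):=\inf\Bigl\{t\ge 0:\ \sup_{x\in X}d\bigl(x,\,w([0,t])\bigr)\le\eps\Bigr\}.
\]
Since $w([0,t])$ is compact, hence closed, it equals $X$ precisely when it is $\eps$-dense for every $\eps>0$; consequently the cover time of $w$ equals $\sup_{\eps>0}c_\eps(w)$, an increasing supremum as $\eps\downarrow 0$ which is attained. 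The key point to verify is that $w\mapsto c_\eps(w)$ is \emph{lower semicontinuous} for the topology of uniform convergence on compact intervals: if $w_k\to w$ then for each $t$ the images $w_k([0,t])$ converge to $w([0,t])$ in Hausdorff distance, and since $K\mapsto\sup_{x\in X}d(x,K)$ is $1$-Lipschitz in the Hausdorff distance, an $\eps$-dense set remains $\eps$-dense in such a limit; a short argument then gives $c_\eps(w)\le\liminf_k c_\eps(w_k)$.

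Finally I would combine these ingredients. By the definition of a \gseq\ (see \Sr{secCon}), for each $\eps>0$ there is $N(\eps)$ with $G_n$ $\eps$-dense in $X$ whenever $n\ge N(\eps)$; therefore $c_\eps(B_n)\le\cov_n$ for such $n$, so $\Ex\bigl[c_\eps(B_n)\bigr]\le 20\,\Hm(X)^2$. By \Tr{thmain} the $B_n$ converge weakly to $B$ in the (Polish) space of continuous paths on $X$ (see \Sr{measures}); applying the Skorokhod representation theorem and then Fatou's lemma to the lower semicontinuous functional $c_\eps$ gives $\Ex\bigl[c_\eps(B)\bigr]\le\liminf_{n}\Ex\bigl[c_\eps(B_n)\bigr]\le 20\,\Hm(X)^2$. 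Letting $\eps\downarrow 0$ along a sequence, monotone convergence together with $\cov(B)=\sup_{\eps>0}c_\eps(B)$ yields $\Ex\bigl[\cov(B)\bigr]\le 20\,\Hm(X)^2$, as required. I expect the main obstacle to be the reduction carried out in the middle paragraph — making precise the passage from the cover time of $X$ to the family of $\eps$-covering times and establishing their lower semicontinuity, so that the uniform bound on the $B_n$ genuinely survives the weak limit; the remaining steps are routine.
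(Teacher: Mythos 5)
Your argument is essentially correct, but it takes a genuinely different route from the paper's. The paper does not transfer the first moment directly: it introduces the continuous functional $h_r(t)$ (the normalised length of the part of the space not yet approached to within $r$ by time $t$), pushes $\bigl(\int_0^T h_r(t)^{1/M}\,dt\bigr)^2$ through the weak limit, and bounds it by the \emph{second} moment of the finite-graph cover time (\Lr{lem:second.moment}); this yields the tail estimate $\Pr[\tau>T]\le 24Q^2/T^2$, which is then integrated in \Lr{lem:covere} at the cost of a factor $2\sqrt{24}<10$ --- that factor is exactly where the $20$ in the statement comes from, because the finite-graph input (\Tr{GeWi}, from \cite{edgecov}) is $CT(G)\le 2\ell(G)^2$, not $20\,\ell(G)^2$ as you state. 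Your route instead observes that the $\eps$-cover time $c_\eps$ is lower semicontinuous on path space, so the portmanteau theorem (equivalently, Skorokhod representation plus Fatou) transfers the first moment directly; combined with the correct constant $2\ell(G_n)^2\le 2\Hm(X)^2$ this in fact gives the sharper bound $CT(X)\le 2\Hm(X)^2$ and makes Lemmas~\ref{lem:second.moment} and~\ref{lem:covere} unnecessary, whereas the paper's detour only ever uses convergence of expectations of bounded continuous functionals. Two points you should tighten: first, the eventual $\eps$-density of $G_n$ in $X$ is not literally part of the definition of a \gseq; it follows from \Lr{arc} together with the facts from \cite{gl} that almost every $G_n$ contains any prescribed finite edge-set and meets every component of its complement (or simply from compactness of $X$ if the $G_n$ are taken nested with dense union, as the paper allows). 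Second, the weak convergence of \Tr{thmain} is formulated on the space $C_T(X)$ of paths on a fixed finite horizon, so the semicontinuity argument should be applied to the truncations $\min(c_\eps,T)$, letting $T\to\infty$ by monotone convergence before sending $\eps\to0$; with these routine adjustments your proof goes through.
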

A related result of Krebs \cite{Krebs} shows that the hitting times for \BM\ on nested fractals are bounded.

\medskip

There are many  constructions of \BM\ on spaces similar to the ones considered in this paper: on finite graphs \cite{BaChaEqu}, on trees and their boundaries \cite{AlEvDir,BaxMar,BGPW,KigDir} on the Sierpinski gasket \cite{BaPeBro,goldstein_random_1987,KumBro} and many other fractals \cite{Hattori,hambly_brownian_1997,kusuoka_lecture_1993}. Brownian motion especially on fractals has attracted a lot of interest, with motivation coming both from pure mathematics and mathematical physics (see \cite{KumBro} and references therein), and has many  connections to other analytic properties of fractals which also attract a lot of research \cite{KigamiAnalysis,strichartz1999analysis}.

The first author had asked for a construction of \BM\ on a special type of \gl\ spaces, namely metric completions of infinite graphs \cite[Section 8]{AgCurrents}, and this paper gives a very satisfactory answer to that question.

\bigskip
This paper is structured as follows. After reviewing some definitions and basic facts \Sr{prel}, we prove the existence part of \Tr{thmain} in \Sr{secCon}. The uniqueness part is then proved in \Sr{SeqUniq}. Then we prove that our process has the \smp\ (\Tr{smp}), and the bound on the cover time is given in \Sr{cover}. Finally, we prove that \Hm\ is a stationary distribution and that our process behaves locally like standard \BM\ inside any edge in \Sr{further}.

\section{Preliminaries} \label{prel}

\subsection{\Gl\ spaces} \label{sgl}
 
An \defi{edge} of a topological space $X$ is an open subspace $I\subseteq X$ homeomorhpic to the real interval $(0,1)$ \st\ the closure of $I$ in $X$ is homeomorphic to $[0,1]$. (We could allow the closure of $I$ to be a circle; it is only for convenience in certain situations that we disallow this.)
Note that the frontier of an edge consists of two points, which we call its \defi{endvertices}. An edge-set of a topological space $X$ is a subspace consisting of finitely many, pairwise disjoint, edges of $X$.

A topological space $X$ is \defi{\gl} if \ti\ an edge-set $E$ of $X$ such that $G\sm E$ is totally disconnected. In that case, we call $E$ a \defi{\des}. 

The following fact provides an equivalent definition of a \gl\ con\-tinuum.

\begin{lemma}[\cite{gl}] \label{arc}
A continuum $X$ is \gl\ \iff\ \fe\ \eps\ \ti\ a finite set of edges $S_\eps$ of $X$ \st\ the diameter of every component of $X \sm  S_\eps$ is less than \eps.
\end{lemma}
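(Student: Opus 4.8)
The plan is to prove the two implications separately; both rest on the compactness of $X$ together with the fact that a zero-dimensional compact metric space admits, for every $\eps>0$, a partition into finitely many clopen sets of diameter $<\eps$.

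For the ``if'' direction, suppose that for each $n\in\N$ there is a finite edge-set $S_{1/n}$ of $X$ every component of whose complement has diameter $<1/n$; the cases where $X$ is a point or a circle being immediate, assume otherwise. First I would set $M:=\bigcup_{n\in\N}\bigcup_{e\in S_{1/n}}e$, an open subset of $X$. Since each point of $M$ lies in an edge, that is, in an open set homeomorphic to $\R$, the subspace $M$ is a second countable boundaryless $1$-manifold, hence a disjoint union of open copies of $\R$ and $S^1$; a circle component would be compact, hence clopen in $X$, forcing $X$ to be a circle, which we excluded. So one takes $E$ to be the set of components of $M$, a collection of pairwise disjoint open copies of $\R$, and it remains to see that $Z:=X\sm M$ is totally disconnected. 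Given distinct $z,z'\in Z$, pick $n$ with $1/n<d(z,z')$; then $z$ and $z'$ both lie in the compact space $Y:=X\sm\bigcup_{e\in S_{1/n}}e$, but in different components of $Y$ since each component of $Y$ is smaller than $1/n$. As components and quasicomponents coincide in a compact Hausdorff space, some set clopen in $Y$ separates $z$ from $z'$, and its trace on $Z$ separates them in $Z$. Hence $Z$ is totally disconnected and $X$ is \gl.

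For the ``only if'' direction, let $X$ be \gl\ with disconnecting edge-set $E$, so $Z:=X\sm\bigcup E$ is compact and totally disconnected and $M:=X\sm Z=\bigcup E$ is a $1$-manifold whose components are the edges; again assume $X$ is neither a point nor a circle, so all these components are copies of $\R$. Fix $\eps>0$. Using zero-dimensionality of $Z$, partition it into finitely many sets $Z_1,\dots,Z_m$, each clopen in $Z$ and of diameter $<\eps/3$, put $\rho:=\min_{i\ne j}d(Z_i,Z_j)>0$, and let $U_i:=\{x:d(x,Z_i)<\rho/4\}$, so that the $U_i$ are pairwise disjoint open sets covering $Z$, each of diameter $<\eps$. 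Then $K:=X\sm\bigcup_iU_i$ is compact and disjoint from $Z$, hence a compact subset of the $1$-manifold $M$, so it meets only finitely many edges $e_1,\dots,e_k$; consequently $K\subseteq e_1\cup\dots\cup e_k$, and therefore every component of $X\sm(e_1\cup\dots\cup e_k)$, being connected and contained in the disjoint union $\bigcup_iU_i$, lies in a single $U_i$ and has diameter $<\eps$. It remains to cut $e_1,\dots,e_k$ into small pieces: one first shows that $\cls{e_j}$ is an arc — each of the two ends of $e_j\cong\R$ has a compact connected limit set contained in the totally disconnected $Z$, hence consisting of a single point — and then uniform continuity of a homeomorphism $[0,1]\to\cls{e_j}$ lets us write $\cls{e_j}$ as a union of finitely many sub-arcs of diameter $<\eps/3$. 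Taking $S_\eps$ to be the (finitely many) interiors of all these sub-arcs, every component of $X\sm S_\eps$ is either one of the now-isolated subdivision points or a component of $X\sm(e_1\cup\dots\cup e_k)$, and in either case has diameter $<\eps$.

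The bulk of the work, and the step I expect to be the main obstacle, lies in the ``only if'' direction: extracting the finite list $e_1,\dots,e_k$ of ``essential'' edges from compactness of $K$, and proving that the closure of each edge is an arc — the latter being precisely where total disconnectedness of $Z$ is used, to collapse the limit set of each end of a copy of $\R$ to a point. Everything else is routine bookkeeping: passing between clopen pieces of $Z$ and suitable neighbourhoods in $X$, and checking that deleting open sub-arcs of the $e_j$ leaves behind only isolated points together with the small pieces already controlled.
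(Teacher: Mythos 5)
The paper gives no proof of this lemma at all --- it is quoted from \cite{gl} --- so your argument can only be judged on its own terms. Your ``if'' direction is correct and complete: the union $M$ of all the edges in the sets $S_{1/n}$ is an open $1$-manifold whose components are open in $X$ and must be copies of $\R$ once the circle case is excluded, and the passage from ``components of the compact set $Y=X\sm\bigcup S_{1/n}$ are small'' to total disconnectedness of $X\sm M$ via the equality of components and quasicomponents in compact Hausdorff spaces is exactly right.

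In the ``only if'' direction the skeleton is sound, but the step you yourself flag as the crux is stated incorrectly: it is \emph{not} true that $\cls{e_j}$ is an arc. Your limit-set argument shows only that each end of $e_j\cong\R$ converges to a single point of $Z$; nothing prevents the two ends from converging to the \emph{same} point, in which case $\cls{e_j}$ is a circle, not an arc --- this happens for every edge of the Hawaiian earring, the paper's very first example (and is why the paper's definition of ``edge'' explicitly excludes circle closures). So ``uniform continuity of a homeomorphism $[0,1]\to\cls{e_j}$'' is unavailable in general. The repair is cheap and stays inside your argument: the nested-compact-sets argument shows that the parametrization $h:\R\to e_j$ extends continuously to $[-\infty,+\infty]$, and subdividing this compact interval into at least two pieces and applying uniform continuity of the extension yields finitely many small open sub-arcs of $e_j$ whose closures \emph{are} arcs, hence genuine edges of $X$; but as written the step fails. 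A second, smaller slip: with $U_i=\{x:d(x,Z_i)<\rho/4\}$ you cannot conclude $\operatorname{diam}U_i<\eps$, since $\rho=\min_{i\ne j}d(Z_i,Z_j)$ may be huge (e.g.\ $Z$ consisting of two distant points); you need to cap the radius, say by $\min(\rho/4,\eps/4)$, after which the rest of your bookkeeping (compactness of $K$ forcing it into finitely many edges, isolated subdivision points, components of $X\sm\bigcup S_\eps$ lying in a single $U_i$) is fine.
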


The following property of \gl\ spaces is very useful to us, as it implies that \BM\ on such a space cannot travel a long distance without traversing a long edge.
\begin{proposition} \label{mineps}
If $X$ is a \gl\ continuum, then \fe\ $\rho>0$ \ti\ a finite edge-set $R_\rho$ of $X$ 
\st\ \fe\ topological path $p:[0,1]\to X$ in $X$, if $d(p(0),p(1))> \rho$ then $p$ traverses an edge in $R_\rho$.
\end{proposition}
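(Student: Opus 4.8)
The plan is to read the statement off from \Lr{arc} by contraposition, taking $R_\rho$ to be exactly the edge-set that lemma supplies for $\eps=\rho$. Since $X$ is a \gl\ continuum, \Lr{arc} applied with $\eps:=\rho$ yields a finite edge-set $S_\rho$ of $X$ \st\ every component of $X\sm S_\rho$ has diameter less than $\rho$; I would simply set $R_\rho:=S_\rho$.

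Now let $p:[0,1]\to X$ be a topological path that traverses no edge of $R_\rho$, meaning that its image $p([0,1])$ is disjoint from every (open) edge in $R_\rho$. Then $p([0,1])\subseteq X\sm R_\rho$, and, being a continuous image of the connected set $[0,1]$, it is a connected subset of $X\sm R_\rho$; hence it lies inside a single component $C$ of $X\sm R_\rho$. Therefore $d(p(0),p(1))\le\operatorname{diam}(C)<\rho$. Contrapositively, every topological path $p$ with $d(p(0),p(1))>\rho$ must traverse some edge of $R_\rho$, which is exactly the assertion.

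So the argument is a one-line deduction from \Lr{arc}, and I do not expect any genuine obstacle: the substantive input is \Lr{arc} itself, which is quoted from \cite{gl}. The one point worth making explicit is the reading of ``$p$ traverses $e$'' as ``$p([0,1])$ meets the open edge $e$''; this is what makes the inclusion $p([0,1])\subseteq X\sm R_\rho$ available, and it is the natural notion for the use to which this proposition is put, namely ruling out paths that move a distance more than $\rho$ while staying clear of the finitely many edges in $R_\rho$. (Were one to insist that ``traverses'' mean that $p$ runs through all of $e$, a small amount of extra care would be needed for paths beginning or ending in the interior of a long edge, which could be handled by also putting into $R_\rho$ the finitely many edges whose closures have diameter at least $\rho$; but I would keep the cleaner reading above.)
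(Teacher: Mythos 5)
There is a genuine gap, and it lies exactly in the point you flag as a mere convention: the reading of ``traverses''. The proposition is needed in the proof of \Lr{exist} in the strong sense that the path must cross an entire edge of $R_\rho$: there the set $R=R_\eps$ enters only through $\eps_1=\min\{\ell(e)\mid e\in R\}$, traversal of an edge of $R$ is converted into traversal of a half-edge in $R'$ and then a quarter-edge in $R''$, and the final bound is $\ceil{T/\del}\,\Pr[\max_{t\in[0,\del]}|W(t)|>\eps_1/4]$ via the reflection principle. That argument requires the particle to cover a distance of at least $\eps_1/4$ inside a single edge in a short time, which is what full traversal gives; under your reading (``$p([0,1])$ meets the open edge $e$'') the event ``$B_n$ traverses an edge of $R$ during $[t,t+\del]$'' is not unlikely at all (a particle currently sitting inside an edge of $R$ meets it with probability one), so the proposition as you prove it cannot support the tightness argument. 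This is also why the sentence preceding the proposition speaks of being unable to ``travel a long distance without traversing a long edge''.

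Under the intended reading your choice $R_\rho:=S_\rho$ with $\eps:=\rho$ is false, and the parenthetical repair does not mend it. The failure is not about which long edges are put into $R_\rho$, but about paths that begin or end at an interior point of a long edge of $S_\rho$: such a path can move a distance greater than $\rho$ without ever reaching an endvertex, hence without fully traversing any element of $R_\rho$, and adding edges of large diameter to $R_\rho$ changes nothing since the very edge being used is already there and still is not fully traversed. (Concretely, let $X$ be a circle of circumference $100\rho$ and let $S_\rho$ consist of two long open arcs; a path running a distance $2\rho$ inside one arc contradicts your statement. Note also that ``the finitely many edges whose closures have diameter at least $\rho$'' is not a finite collection: every open subarc of a long edge is again an edge of $X$.) The paper's proof is shaped precisely by this issue: it applies \Lr{arc} with $\eps=\rho/3$ rather than $\rho$, and then subdivides each edge of $S$ into pieces of length at most $\rho/6$ to obtain $R_\rho$. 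Then a path that fully traverses no piece stays, apart from an initial and a final incursion of diameter at most $\rho/6$ into the pieces containing $p(0)$ and $p(1)$, attached to a single component of the complement, of diameter less than $\rho/3$; so its displacement is at most $\rho/3+2\cdot\rho/6<\rho$, and the contrapositive is the assertion. The factor $3$ and the subdivision step are thus not cosmetic, and your one-line deduction from \Lr{arc} proves only the weak variant that the rest of the paper cannot use.
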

\begin{proof}
Applying \Lr{arc} for $\eps=\rho/3$, we obtain a finite set of edges $S$ \st\ the diameter of every path-component of $X \sm  S$ is less than $\rho/3$. Subdivide each edge $e\in S$ into a finite set of edges each of length at most $\rho/6$, and let $R$ be the set of edges resulting from $S$ after all these subdivisions. Now note that any  topological path $p$  as in the assertion has to traverse an element of $R$; to see this, contract each path-component of $X \sm  S$ into a point to obtain a new metric space $X'$, and note that $X'$ is isometric to a finite graph whose edgeset can be identified with $R$. Moreover, after the  contractions we have $d(p(0),p(1))>\rho - 2\rho/3= \rho/3$, and as each  edge of our graph has length at least $\rho/6$, the assertion easily follows by geometric arguments. Thus we can choose  $R_\rho=R$.
\end{proof}

\Gl\ spaces have nice bases:
\begin{lemma}[\cite{gl}] \label{basis}
Let $X$ be a \gl\ metric continuum. Then the topology of $X$ has a basis consisting of connected open sets $O$ \st\ the frontier of $O$ is a finite set of points each contained in an edge.
\end{lemma}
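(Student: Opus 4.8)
The plan is to build the desired basis from the one given implicitly by Lemma~\ref{arc}, using the connectedness structure of $X \sm S_\eps$ together with the observation that removing finitely many edge-points from a continuum leaves only finitely many components. First I would fix a point $x \in X$ and an open neighbourhood $U$ of $x$; I want to produce a connected open $O$ with $x \in O \subseteq U$ whose frontier is finite and contained in edges. Choose $\eps>0$ with the open ball $B(x,\eps) \subseteq U$, and apply Lemma~\ref{arc} to get a finite edge-set $S_{\eps/3}$ such that every component of $X \sm S_{\eps/3}$ has diameter less than $\eps/3$. Here it is convenient to first subdivide the edges of $S_{\eps/3}$ so that, in addition, the two half-open ``stubs'' near the endvertices of each edge are short; this lets us later pick frontier points comfortably inside edges rather than at endvertices.

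Next I would take $C$ to be the component of $X \sm S_{\eps/3}$ containing $x$ (if $x$ lies on an edge of $S_{\eps/3}$, shrink $\eps$ first or treat that case separately by working inside the edge, where the claim is immediate from the interval topology). The set $C$ has diameter $<\eps/3$, so $C \subseteq B(x,\eps) \subseteq U$. The key structural point is that $X \sm S_{\eps/3}$ has only finitely many components — this is because each removed edge contributes at most two ``sides,'' so the number of components is at most $2|S_{\eps/3}|$; more carefully, $S_{\eps/3}$ together with the endvertices is compact, its complement is open, and in a continuum the complement of such a finite edge-set has finitely many (open) components, each of which is open and closed in $X \sm S_{\eps/3}$. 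Consequently $C$ is open in $X \sm S_{\eps/3}$, hence open in $X \sm (\text{endvertices of } S_{\eps/3})$, and its frontier in $X$ is contained in the finitely many edge-points of $S_{\eps/3}$ that are limits of points of $C$. To get connectedness with points genuinely interior to edges, I would then enlarge $C$ slightly: for each edge $e \in S_{\eps/3}$ having an endvertex in $\overline{C}$, adjoin a short half-open initial segment of $e$ (short enough to keep everything inside $B(x,\eps)$, which is possible after the initial subdivision). Call the result $O$. Adjoining arcs attached at points of $\overline C$ keeps $O$ connected, $O$ is open since we used half-open segments meeting $C$ at its frontier points, $O \subseteq U$, and the frontier of $O$ now consists of the finitely many ``cut points'' in the interiors of those edges — a finite set of points each contained in an edge, as required.

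The main obstacle I anticipate is the bookkeeping around endvertices: Lemma~\ref{arc} gives components of small diameter but says nothing about whether their frontiers land in the interiors of edges rather than at endvertices or, worse, at points of $X \sm \bigcup E$ — and a priori a component of $X \sm S_{\eps/3}$ could ``wrap around'' an endvertex of a removed edge. The subdivision trick handles the first issue, but I would need to argue carefully that after removing an edge-set the frontier of each component really is a finite set lying in $\bigcup S_{\eps/3}$, i.e.\ that no frontier point is a point of $X \sm \bigcup E$; this follows because $X \sm S_{\eps/3}$ is an open subset of $X$ whose components are open, so the frontier of $C$ in $X$ is disjoint from $X \sm S_{\eps/3}$ and therefore contained in the compact set consisting of $S_{\eps/3}$ together with its endvertices. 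A final clean-up subdivision pushes these finitely many frontier points off the endvertices and into edge interiors. Everything else is routine point-set topology.
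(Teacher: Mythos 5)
The paper itself offers no proof of this lemma: it is imported verbatim from \cite{gl}, so I can only judge your argument on its own terms. Your overall construction --- apply \Lr{arc}, take the component $C$ of $X\sm S_{\eps/3}$ containing $x$, then fatten $C$ by short half-open initial segments of the incident edges of $S_{\eps/3}$ so that the frontier becomes finitely many interior points of edges --- is the right one and can be completed. But there is a genuine gap in the justification, and it sits exactly at the load-bearing point. You assert that ``$X\sm S_{\eps/3}$ is an open subset of $X$ whose components are open.'' This is false: the edges in $S_{\eps/3}$ are \emph{open} in $X$, so $X\sm S_{\eps/3}$ is closed, and its components are closed and in general not open (that is precisely why you need to fatten $C$ at all --- think of $X=[0,1]$ with the single edge $(0,1)$ removed). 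Everything downstream leans on this: the claim that $\partial C$ is finite and lies in $\overline{S_{\eps/3}}$, and the openness of the fattened set $O$, both require that $C$ be \emph{clopen in} $X\sm S_{\eps/3}$, equivalently that $X\sm S_{\eps/3}$ have only finitely many components. Your ``each edge contributes at most two sides, so at most $2|S_{\eps/3}|$ components'' is the right intuition, but as written it is justified only by the incorrect openness claim; finiteness of the components after deleting a finite edge-set is a nontrivial fact (it is the analogue of what the present paper cites as \cite[Lemma 2.9]{gl} when it needs the same statement for pseudo-edges).

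The fix is short but must be supplied: in a compact Hausdorff space components coincide with quasicomponents, so if a component $K$ of the closed set $X\sm S_{\eps/3}$ avoided the finite set $\Theta$ of endvertices of $S_{\eps/3}$, one could find a set $K'$ with $K\subseteq K'$, clopen in $X\sm S_{\eps/3}$ and disjoint from $\Theta$; such a $K'$ is closed in $X$ and, being disjoint from $\overline{\bigcup S_{\eps/3}}$, also open in $X$, contradicting connectedness of the continuum $X$. Hence every component meets $\Theta$, there are at most $2|S_{\eps/3}|$ of them, and each (in particular $C$) is clopen in $X\sm S_{\eps/3}$ with $\partial C\subseteq\Theta$. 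With that in hand your fattening step goes through cleanly: for each edge $e\in S_{\eps/3}$ the set $\overline{e}\sm O$ is closed in the arc $\overline{e}$, hence compact, so $X\sm O$ is a finite union of closed sets and $O$ is open; connectedness and $\partial O\subseteq\bigcup S_{\eps/3}$ (interior points of edges) follow as you describe, and the case of $x$ inside an edge is indeed immediate. So: right strategy, but the finiteness/clopenness of the component structure is asserted via a wrong topological claim rather than proved, and that is the one step the lemma actually turns on.
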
 

\subsection{Measures on the space of sample paths and weak convergence} \label{measures}
Given a \gl\ space $(X,d_X)$, we denote by $C=C_T(X)$ the set of continuous functions from the real interval $[0,T]$ to $X$. We call $C$ the \defi{space of sample paths}; our process will be formally defined as a probability measure on $C$. We endow $C$ with the $L^\infty$ metric 
$d_C(b,d) := \sup_{t\in [0,T]} d_X(b(t),c(t))$.

Let $\cm= \cm(C)$ denote the space of all borel probability measures on $C$. The \defi{weak topology} on $\cm$ is the topology generated by the open sets of the form 
$$O_\mu(f_1,\ldots, f_k; \eps_1,\ldots, \eps_k) = \left\{ \nu\in \cm : \lvert \int f_i d\nu - \int f_i d\mu \rvert < \eps_i, 1\leq i \leq k \right\},$$
where $\mu$ ranges over all elements of $\cm$, the $f_i$ range over all bounded continuous functions $f_i: C \to \R$, and the $\eps_i$ range over $\R_{>0}$. An immediate consequence of this definition is that a sequence of measures $\mu_i\in \cm$ converges in this topology to $\mu\in \cm$ \iff\ $\int f d\mu_i$ converges to $\int f d\mu$ \fe\ bounded continuous function $f: C \to \R$. If such a sequence converges, then the limit is unique \cite[Chapter II, Theorem~5.9]{Parth}.

Our main tool in obtaining limits of stochastic processes is the following standard fact, see e.g.\ \cite[Chapter VII, Lemma~2.2]{Parth}.\footnote{Condition (i) in \cite{Parth}[Chapter VII, Lemma~2.2] is void in our case because our spaces have finite diameter.}
\begin{lemma} \label{Parth2.2}
Let $\Gamma$ be a set of probability measures on $C$. Then $\overline{\Gamma}$ is compact \iff\ \fe\ $\eps,\rho>0$ \ti\ $\eta=\eta(\eps,\rho)>0$ \st\ 
$$\mu(\{p \mid \omega_p(\eta)> \rho\}) < \eps \text{ \fe\ } \mu\in\Gamma,$$
where $\omega_p(\eta):= \sup_{|t-t'|\leq \eta} |p(t) - p(t')|$. \note{Konrad, does this \oo\ coincide with the optimal modulus of continuity of $p$?}
\end{lemma}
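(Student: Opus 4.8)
The plan is to derive this as a combination of two classical facts: Prokhorov's theorem, characterising the relatively compact families of Borel probability measures on a Polish space as the tight ones, and the Arzel\`a--Ascoli theorem, characterising the relatively compact subsets of $C$. First I would note that $C=C_T(X)$ is indeed Polish: it is complete because a uniform limit of continuous maps into the complete space $X$ (recall $X$ is a continuum, hence compact metric) is again continuous, and it is separable because $X$ is separable, by a routine argument—e.g.\ embedding $X$ into the Hilbert cube and using separability of $C([0,T],[0,1])$. Hence Prokhorov applies, and $\overline{\Gamma}$ is compact in $\cm$ \iff\ $\Gamma$ is \emph{tight}, i.e.\ \fe\ $\eps>0$ \ti\ a compact $K\subseteq C$ with $\mu(C\sm K)<\eps$ \fe\ $\mu\in\Gamma$. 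It then remains to show that tightness of $\Gamma$ is equivalent to the stated modulus-of-continuity condition.

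The link is the Arzel\`a--Ascoli theorem in the form: a subset $A\subseteq C$ is relatively compact \iff\ $A$ is uniformly equicontinuous, meaning $\sup_{p\in A}\omega_p(\eta)\to 0$ as $\eta\to 0$; the usual second requirement, that $\{p(t):p\in A\}$ be relatively compact in $X$ for each $t$, holds automatically here because $X$ is compact. I would also use that $p\mapsto\omega_p(\eta)$ is $2$-Lipschitz on $C$ (if $d_C(p,q)\le\delta$ then $d_X(p(s),p(t))\le d_X(q(s),q(t))+2\delta$ for all $s,t$, so $|\omega_p(\eta)-\omega_q(\eta)|\le 2\delta$), so that each set $\{p:\omega_p(\eta)\le\rho\}$ is closed.

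For the forward implication, fix $\eps,\rho>0$ and let $K$ be a compact set witnessing tightness for $\eps$; since $K$ is equicontinuous there is $\eta>0$ with $\omega_p(\eta)\le\rho/2<\rho$ for all $p\in K$, whence $\{p:\omega_p(\eta)>\rho\}\subseteq C\sm K$ and $\mu(\{p:\omega_p(\eta)>\rho\})<\eps$ for every $\mu\in\Gamma$; this $\eta$ serves as $\eta(\eps,\rho)$. Conversely, assume the modulus condition, fix $\eps>0$, and for each $k\in\N$ pick $\eta_k:=\eta(\eps 2^{-k},1/k)$, so $\mu(\{p:\omega_p(\eta_k)>1/k\})<\eps 2^{-k}$ for all $\mu\in\Gamma$. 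Set $K:=\bigcap_{k\in\N}\{p:\omega_p(\eta_k)\le 1/k\}$. Then $K$ is closed, and it is equicontinuous—given $\rho>0$, choose $k$ with $1/k\le\rho$, so $\omega_p(\eta_k)\le\rho$ for all $p\in K$—hence compact by Arzel\`a--Ascoli; and $\mu(C\sm K)\le\sum_{k\in\N}\eps 2^{-k}=\eps$ for every $\mu\in\Gamma$, so $\Gamma$ is tight.

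I do not expect a genuine obstacle here: the content is in assembling the right inputs rather than in any single estimate. The one point worth care is that compactness of $X$ makes the ``pointwise'' halves of both Prokhorov's and Arzel\`a--Ascoli's criteria vacuous (exactly what the footnote about ``condition (i)'' of \cite{Parth} records), after which only the elementary $\sum\eps 2^{-k}$ bookkeeping above is needed—which is why it is legitimate to simply cite \cite{Parth} for the statement.
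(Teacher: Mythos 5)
Your proof is correct and is essentially the paper's own approach: the paper gives no proof of this lemma, simply citing \cite[Chapter~VII, Lemma~2.2]{Parth}, and your Prokhorov-plus-Arzel\`a--Ascoli argument is exactly the standard proof behind that citation, including the observation (matching the paper's footnote) that compactness of $X$ renders the pointwise conditions in both criteria vacuous. The only nitpick is trivial bookkeeping, e.g.\ whether $\sum_{k\in\N}\eps 2^{-k}$ is $\eps$ or $2\eps$ depending on where $\N$ starts, which does not affect the argument.
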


\note{Konrad, could you extend this from $[0,T]$ to infinity?}

\subsection{Metric graphs and their \BM} \label{Megr}

In this paper, by a \defi{graph} \g we will mean a topological space homeomorhpic to a simplicial 1-complex. We assume that any graph \g is endowed with a fixed homeomorphism $h: K \to \G$ from a simplicial 1-complex $K$, and call the images under $h$ of the 0-simplices of $K$ the \defi{vertices} of \G, and the images under $h$ of the 1-simplices of $K$ the \defi{edges} of \G. Their sets are denoted by \defi{$V(G)$} and \defi{$E(G)$} respectively.  All graphs considered will be \defi{finite}, that is, they will have finitely many vertices and edges.

A metric graph is a graph \g endowed with an assignment of lengths $\ell: E(G) \to \R_{>0}$ to its edges. This assignment naturally induces a metric $d_\ell$ on \g with the following properties. Edges are locally isometric to real intervals, their lengths (i.e.\ 1-dimensional \HM s) \wrt\ $d_\ell$ coincide with $\ell$, and \fe\ $x,y\in V(G)$ we have $d_\ell(x,y):= \inf_{P \text{ is an \arc{x}{y} }} \ell(P)$, where $\ell(P):= \sum_{P \supseteq e \in E(G)} \ell(e)$; see \cite{ltop} for details on $d_\ell$.

The length $\ell(G)$ of a metric graph \g is defined as $\sum_{e\in E(G)} \ell(e)$.

An \defi{interval} of an edge $e$ of \G\ is a connected subspace of $e$.

\medskip
Brownian motion on \R\ extends naturally to Brownian motion on a metric graph. 
The edges incident to a vertex constitute a ``Walsh spider'' (see, e.g., \cite{Wa,BPY}) with equiprobable legs, and it is easily verified that
in such a setting the probability of traversing a particular incident edge (or oriented loop) first is proportional to the reciprocal of the length of that edge, while inside any interval of an edge, it behaves like standard \BM\ on a real interval of the same length. To make this more precise, it is shown in \cite{BaChaEqu} that \ti\ a probability distribution on the space $C(G)$ of continuous functions from a real interval $[0,T]$ to \G, which we will call \defi{standard \BM} on \G, that has the following properties
\begin{enumerate}
\item The \smp;
\item \label{bmii} \fe\ vertex $v$ of \g and any choice of points $p_i, 1\leq i \leq k$, one inside each edge incident with $v$, the probability to reach $p_j$ before any other $p_i, i\neq j$ when starting at $v$ is $\frac{1/\ell_j}{ \sum_{1\leq i \leq k} 1/\ell_i}$, where $\ell_i$ denotes the length of the interval from $v$ to $p_i$ (\cite[\S 4: Lemma 1 applied with $\tilde{p_i}:=1/k$]{BaChaEqu});
\item \fe\ vertex $v$ of \G, the expected time to exit the ball of radius $r$ around $v$ when starting at $v$ tends to 0 as $r$ tends to 0 (\cite[(3.1)]{BaChaEqu}).
\item \label{bmiv} When starting at a point $p$ inside an edge $e$, the expected time till the first traversal of one of the two intervals of $e$ of length $\ell$ starting at $p$ is $\ell^2$ (\cite[(3.4)]{BaChaEqu}).
\end{enumerate}

\comment{
A $k$-subdivision of a metric graph $G$ is a metric graph $G_k$  \st\ every edge of $G_k$ has length smaller than $k$ and \ti\ an isometry $\iota: G_k \to G$. Any \BM\ on $G$ naturally induces a continuous-time random walk $Z_t, t\in \R_+$ on $G_k$, and also a discrete time random walk $R_i, i\in \N$. It follows from \ref{bmii} that the transition probabilities of $Z_t$ and $R_i$ coincide with the transition probabilities of the usual random walk on $G_k$, where the probability to go from a vertex $v$ to each of its neighbours $w$ is $c(vw)/ \sum_{y\tilde v} c(vy)$ if we set $c(vy) =1/\ell(vy)$ \fe\ edge $vy$ incident with $v$. Combined with \ref{bmiv}
this implies that ...

\begin{lemma} \label{Gk}
Let \g be a finite metric graph and $(G_{k_i})_{i\in\N}$ a sequence of subdivisions of \g with $\lim k_i = 0$. Then \fe\ interval $I$ of an edge of \G, w
\end{lemma}
}

\medskip
The expected time for \BM\ started at a vertex $a$ to visit a vertex $z$ and then return to $a$, i.e., $\Ex_a[\tau_z] + \Ex_ z[\tau_a]$, is called the \defi{commute time} between $a$ and $z$.

\begin{lemma}[\cite{CRRST,LyonsBook}]\label{com}
Let \G\ be a finite metric graph, and $a$, $z$ two vertices of \G. The commute time between a and z equals $2\ell(G) R(a,z)$.
\end{lemma}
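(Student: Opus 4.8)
The plan is to reduce the identity to electrical reciprocity on the finite weighted graph $(V(G),c)$ with conductances $c(e):=1/\ell(e)$, by identifying the expected hitting times $h_z(x):=\Ex_x[\tau_z]$ and $h_a(x):=\Ex_x[\tau_a]$ with solutions of a Poisson problem on \G. These functions are finite since \G\ is finite and connected. From the description of standard \BM\ on a metric graph recalled above, its generator is $\tfrac12\partial_x^2$ in the interior of every edge: property~\ref{bmiv} (a length-$\ell$ excursion from an interior point takes expected time $\ell^2$) pins the constant to $\tfrac12$. At a vertex $v$, \BM\ is a symmetric Walsh spider; applying property~\ref{bmii} with all leg-lengths equal, the exit measure of a small star neighbourhood of $v$ is uniform over the incident edges, and expanding $h_z$ along each leg (using $h_z''=-2$ there) while controlling the $O(\delta^2)$ exit time via Dynkin's formula forces the Kirchhoff--Neumann condition $\sum_{e\ni v}\partial_e h_z(v)=0$, where $\partial_e h_z(v)$ denotes the derivative of $h_z$ at $v$ in the direction of $e$. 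These facts, together with the maximum principle for the graph Laplacian (the difference of two solutions is linear on each edge, harmonic at every vertex other than $z$, and vanishes at $z$, hence identically zero), show that $h_z$ is characterised as the unique continuous function on \G\ with $h_z(z)=0$, with $\tfrac12 h_z''=-1$ inside every edge, and with $\sum_{e\ni v}\partial_e h_z(v)=0$ at every $v\neq z$.

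Solving the edge ODE $h_z''=-2$ on an edge $e=vw$ of length $\ell_e$, parametrised by arclength from $v$, gives $\partial_e h_z(v)=\tfrac{h_z(w)-h_z(v)}{\ell_e}+\ell_e$. Substituting into the vertex condition at each $v\neq z$ and writing $m(v):=\sum_{e\ni v}\ell(e)$ yields $(Lh_z)(v)=m(v)$, where $L$ is the Laplacian of $(V(G),c)$, $(Lf)(v)=\sum_{w\sim v}c(vw)(f(v)-f(w))$. Since $\sum_v(Lh_z)(v)=0$ automatically and $\sum_v m(v)=2\ell(G)$ (each edge contributes its length to each of its two ends), this is equivalent to $Lh_z=m-2\ell(G)\,\delta_z$, with $\delta_z$ the point mass at $z$ and $m$ the function $v\mapsto m(v)$; symmetrically $Lh_a=m-2\ell(G)\,\delta_a$. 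Subtracting, $L(h_z-h_a)=2\ell(G)(\delta_a-\delta_z)$, so $h_z-h_a$ equals, up to an additive constant, $2\ell(G)$ times the electrical potential of a unit current flowing from $a$ to $z$ in the resistor network with resistances $\ell(e)$. Evaluating the potential drop and using $h_z(z)=h_a(a)=0$,
\[
\Ex_a[\tau_z]+\Ex_z[\tau_a]=h_z(a)+h_a(z)=\big(h_z-h_a\big)(a)-\big(h_z-h_a\big)(z)=2\ell(G)\,R(a,z),
\]
as claimed; the reciprocity $\phi(a)-\phi(z)=I\cdot R(a,z)$ for a current flow of strength $I$, used in the last step, is exactly the ingredient furnished by \cite{CRRST,LyonsBook}.

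I expect the only real difficulty to be the first step — verifying that $h_z=\Ex_\cdot[\tau_z]$ genuinely lies in the domain of the generator with the stated Neumann behaviour at the branch points, i.e.\ that one may legitimately carry out one-dimensional calculus with \BM\ on \G\ across vertices. Once that is in place the argument is finite linear algebra. A variant that trades part of this analysis for combinatorial bookkeeping is to pass to a subdivision $G_k$ of \G\ with all edge lengths at most $1/k$, observe via property~\ref{bmii} that the trace of \BM\ on $V(G_k)$ is the random walk with conductances $1/\ell(e)$ (whose commute time measured in steps is $2\big(\sum_e 1/\ell(e)\big)R(a,z)$ by \cite{CRRST}), and then weight each step out of a vertex $v$ by its expected duration $m(v)\big/\sum_{e\ni v}(1/\ell(e))$ before summing, which again gives $2\ell(G)R(a,z)$. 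I would nonetheless make the ODE computation the main line, since it sidesteps counting numbers of visits and handling loops and parallel edges separately.
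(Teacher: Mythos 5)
Your argument is correct, but it is worth noting that the paper does not prove this lemma at all: it is imported wholesale from the literature (the commute-time identity of \cite{CRRST}, in the weighted/metric form found in \cite{LyonsBook}), so your self-contained derivation is a genuinely different route. What you do is re-prove the identity from the axiomatic description of standard \BM\ on a metric graph given in Section~2.3: the edge ODE $h_z''=-2$ (which, as you say, can be obtained either from the generator or, avoiding generator-domain issues, by first-step analysis inside an edge, since exit probabilities are linear and the expected exit time from an interval is $x(\ell_e-x)$), the Kirchhoff--Neumann condition at vertices extracted from the equiprobable Walsh-spider behaviour via an exit-time expansion, and then electrical reciprocity $\phi(a)-\phi(z)=R(a,z)$ after recognising $L h_z=m-2\ell(G)\delta_z$. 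The computation $\partial_e h_z(v)=\frac{h_z(w)-h_z(v)}{\ell_e}+\ell_e$, the balance $\sum_v m(v)=2\ell(G)$, and the final cancellation $h_z(a)+h_a(z)=(h_z-h_a)(a)-(h_z-h_a)(z)$ are all correct. What your approach buys is independence from the discrete commute-time theorem and an argument entirely in the metric-graph setting the paper actually uses; what it costs is exactly the analytic point you flag (justifying that $h_z$ is $C^1$ up to vertices with the Kirchhoff condition, for which you need the exit time of the small star to be $o(\delta)$ --- property (iii) as stated only gives $o(1)$, so you should compute $\Ex_v[\tau_\delta]=\delta^2$ for the spider directly or invoke property \ref{bmiv}), plus two small bookkeeping corrections: with parallel edges the Laplacian must sum over edges rather than neighbours, and a loop at $v$ contributes its length twice to $m(v)$ (its two endpoint derivatives each equal $\ell_e$), which is what keeps $\sum_v m(v)=2\ell(G)$ and hence the constant $2\ell(G)$ intact. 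Your second variant (trace of \BM\ on a fine subdivision plus the discrete CRRST formula and expected step durations) is essentially the reduction the cited references make available, so either route legitimately fills the gap the paper leaves to citation.
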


\subsection{Electrical network basics}

\newcommand{\are}{\vec{e}}
\newcommand{\edge}[2]{\ensuremath{#1}\text{--}\ensuremath{#2}~edge}
\newcommand{\arE}{\vec{E}}
\newcommand{\ded}[1]{\ensuremath{\overrightarrow{#1}}}

An \defi{electrical network} is a graph $G$ endowed with an assignment of resistances $r: E \to \R_+$ to its edges. The set \defi{$\arE$} of \defi{directed edges} of \G\ is the set of ordered pairs $(x,y)$ such that $xy\in E$. Thus any edge $e$ of \g with endvertices $x,y$ corresponds to two elements of $\arE$, which we will denote by $\ded{xy}$ and $\ded{yx}$.
A \defi{\flo{p}{q}} of strength $I$ in \g is a function $i: \arE \to \R$ with the following properties
\begin{enumerate}
 \item $i(\ded{e^0 e^1}) = i(\ded{ e^1e^0 })$ \fe\ $e\in E$ ($i$ is antisymmetric);
\item \fe\ vertex $x\neq p,q$ we have  $\sum_{y\in  N(x)} i(\ded{xy}) = 0$, where \defi{N(x)} denotes the set of vertices sharing an edge with $x$ ($i$ satisfies Kirchhoff's node law outside $p,q$);
\item $\sum_{y\in  N(p)} i(\ded{py}) = I$  and $\sum_{y\in  N(q)} i(\ded{qy}) = -I$ ($i$ satisfies the boundary conditions at $p,q$).
\end{enumerate}

The \defi{effective resistance} $R_G(p,q)$ from a vertex $p$ to a vertex $q$ of \g is 
defined by 
$$R_G(p,q):= \inf_{i \text{ is a \flo{p}{q}\ of strength 1}} E(i),$$
 where the \defi{energy} $E(i)$ of $i$ is defined by $E(i) := \sum_{\ded{e}\in \arE}  i(\ded{e})^2 r(e)$. In fact, it is well-known that  this infimum is attained by a unique  \flo{p}{q}, called the corresponding \defi{electrical current}. 

The effective resistance satisfies the following property which justifies its name

\begin{lemma} \label{effres}
Let \g be an electrical network contained in an electrical network $H$ in such a way that there are exactly two vertices $p,q$ of \g connected to vertices of $H - G$ with edges. Then if $H'$ is obtained from $H$ by replacing $G$ with a \edge{p}{q} of resistance $R_G(p,q)$, then \fe\ two vertices $v,w$ of $H'$ we have $R_{H'}(v,w) =R_{H}(v,w) $. 
\end{lemma}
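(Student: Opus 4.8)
The plan is to build, for any two vertices $v,w$ of $H'$, an energy‑preserving correspondence between unit \flos{v}{w}\ in $H$ and unit \flos{v}{w}\ in $H'$, and then to pass to infima in the definition of effective resistance. Throughout, write $e_{pq}$ for the new edge of $H'$ (so $r(e_{pq})=R_G(p,q)$), and let $i^*$ be the electrical current of strength $1$ from $p$ to $q$ inside $G$, which exists by the remark in the excerpt and satisfies $E(i^*)=R_G(p,q)$; by scaling a \flo{p}{q}\ of strength $c$ in $G$ one sees that $E(c\cdot i^*)=c^2R_G(p,q)$ and that $c^2R_G(p,q)$ is the minimum energy among \flos{p}{q}\ of strength $c$ in $G$.

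First I would pass from $H'$ to $H$. Given a unit \flo{v}{w}\ $i$ in $H'$, set $c:=i(\ded{pq})$ and define $i'$ on $H$ to agree with $i$ on every edge of $H$ outside $G$ and to equal $c\cdot i^*$ on the edges of $G$. Using \kfl\ for $i$ at $p$ and at $q$ --- or, in the case $p$ or $q$ coincides with $v$ or $w$, the boundary condition there --- together with the fact that $i^*$ obeys \kfl\ off $\{p,q\}$, it \istc\ that $i'$ is again a unit \flo{v}{w}\ in $H$. Since $i'$ and $i$ agree off the edges of $G$, and the edge $e_{pq}$ contributes $c^2R_G(p,q)$ to $E(i)$ while the edges of $G$ contribute $E(c\cdot i^*)=c^2R_G(p,q)$ to $E(i')$, we get $E(i')=E(i)$. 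Taking the infimum over all $i$ yields $R_H(v,w)\le R_{H'}(v,w)$.

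For the reverse inequality I would pass from $H$ to $H'$. Given a unit \flo{v}{w}\ $j$ in $H$, note that since $p,q$ are the only vertices of $G$ joined by edges to $H-G$ and $v,w$ are not interior vertices of $G$, the restriction $j\restr G$ is a \flo{p}{q}\ in $G$; let $c$ be its strength, i.e.\ the net current $j$ sends from $p$ into $G$ (equivalently, by conservation, the net current it delivers at $q$). Define $j'$ on $H'$ to agree with $j$ outside $G$ and to carry current $c$ along $e_{pq}$; as before this is a unit \flo{v}{w}\ in $H'$. Since $j'$ and $j$ agree outside $G$, the edges of $G$ contribute $E(j\restr G)\ge c^2R_G(p,q)$ to $E(j)$ (minimality of $R_G(p,q)$ applied to the strength‑$c$ flow $j\restr G$) while $e_{pq}$ contributes exactly $c^2R_G(p,q)$ to $E(j')$, so $E(j')\le E(j)$. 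Hence $R_{H'}(v,w)\le R_H(v,w)$, and combined with the previous paragraph this gives $R_{H'}(v,w)=R_H(v,w)$.

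The computations are entirely routine; the only points needing care --- the closest thing to an obstacle here --- are the sign conventions in the \kfl\ checks, the degenerate case where $p$ or $q$ equals $v$ or $w$ (where \kfl\ at that vertex is replaced by the boundary condition of strength $\pm1$, the arithmetic being unchanged), and the verification that $j\restr G$ is genuinely a \flo{p}{q}\ in $G$, which is precisely where the hypothesis on how $G$ attaches to $H$ and the fact that $v,w$ avoid the interior of $G$ are used. One could also argue via voltages and the uniqueness of harmonic extensions, but the flow formulation above is more self‑contained.
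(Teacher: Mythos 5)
Your proof is correct, and it is exactly the argument the paper has in mind when it says the lemma ``follows easily from the definition of effective resistance'' and defers details to the reference: a flow-surgery correspondence between unit \flos{v}{w}\ in $H$ and in $H'$ that preserves (one direction) or does not increase (the other direction) the energy, followed by taking infima. You simply supply the verification the paper omits, including the only delicate points (the node-law checks at $p,q$, the degenerate case $\{p,q\}\cap\{v,w\}\neq\emptyset$, and that $j\restr G$ is a \flo{p}{q}\ of some strength $c$), so nothing further is needed.
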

The proof of this follows easily from the definition of effective resistance. See e.g.\ \cite{LyonsBook} for details. 

\medskip
Any metric graph naturally gives rise to an electrical network by setting $r= \ell$, and we will assume this whenever talking about effective resistances in metric graphs.

The importance of effective resistances for this paper is due to the following fact, showing that they determine transition probablities between any two points in a finite set for \BM\ on a metric graph.

\begin{lemma}[{\cite[Exercise 2.54]{LyonsBook}}] \label{trpr}
Let \g be a metric graph and $U$ a finite set of points of \G. Start \BM\ at a point $o\not\in U$ of \g and stop it upon its first visit to $U$. Then the exit probabilities are determined by the values $\{R(x,y) \mid x,y\in U\cup\sgl{o}\}$.
\end{lemma}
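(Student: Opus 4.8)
The plan is to reduce \Lr{trpr} to the corresponding statement for random walks on finite networks, and then to recover, from the pairwise effective resistances alone, the electrical network obtained by deleting all vertices outside $U\cup\sgl{o}$.

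First I would carry out the standard reductions. Subdividing $G$ changes neither the underlying metric space, nor \BM\ on it, nor --- by the series law --- the effective resistances among any fixed finite set of points, so \obda\ every point of $W:=U\cup\sgl{o}$ is a vertex of $G$, and we may assume $G$ is connected (otherwise discard the components not containing $o$; membership in the component of $o$ is visible from $R$). Equip $G$ with conductances $c(e):=1/\ell(e)$. The key point is that the \emph{vertex trace} of \BM\ on $G$ --- the sequence of successive distinct vertices it visits --- is precisely the random walk on the weighted graph $(G,c)$: from a vertex $v$ the first incident edge (or oriented loop) to be fully traversed is $vw$ with probability proportional to $c(vw)$, which is the Walsh--spider computation recalled just before property \ref{bmii}, and the rest is the \smp. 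Since $U\subseteq V(G)$, \BM\ and its vertex trace meet $U$ for the first time at the same vertex, so \Lr{trpr} reduces to the discrete claim: \emph{for the random walk on a finite connected network $(G,c)$ started at $o\notin U$ and stopped on first hitting $U$, the law of the vertex of $U$ that is hit depends only on $\{R(x,y):x,y\in W\}$} (this is the cited \cite[Exercise~2.54]{LyonsBook}).

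For the discrete claim I would eliminate, one at a time, every vertex of $V(G)\setminus W$ by node elimination (replacing each eliminated vertex by a mesh of edges among its former neighbours, with the conductances given by the star--mesh transform), obtaining a finite loopless network $H$ with vertex set exactly $W$. Node elimination preserves effective resistances among the surviving vertices --- this is the computation behind \Lr{effres} --- so $R_H=R$ on $W\times W$, and it is a standard fact about network reductions (see \cite{LyonsBook}) that the random walk on $H$ is the trace on $W$ of the random walk on $(G,c)$; in particular the two walks have the same law of first-hit vertex of $U$ when started from $o$. But in $H$ every vertex other than $o$ lies in $U$, so the walk on $H$ from $o$ reaches $U$ in a single step, whence that law is simply $u\mapsto c_H(o,u)\big/\sum_{u'\in U}c_H(o,u')$. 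It thus remains to observe that the conductances of $H$ are determined by its effective-resistance matrix, which is pure linear algebra: for a connected network on a fixed vertex set $W$ with weighted Laplacian $L$ one has $R(x,y)=(e_x-e_y)^{\top}L^{+}(e_x-e_y)$, where $e_x$ is the standard basis vector at $x$ and $L^{+}$ is the Moore--Penrose pseudoinverse; summing this identity over $x$ and over $y$, and using $L^{+}\mathbf 1=0$, recovers in turn $\operatorname{tr}(L^{+})$, the diagonal of $L^{+}$, all of $L^{+}$, and hence $L=(L^{+})^{+}$ together with every conductance.

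I expect the main obstacle to be precisely this last step --- that the effective resistances on $W$ pin down the eliminated network $H$, equivalently that the map from conductances to effective-resistance matrices on a fixed vertex set is injective; everything else is either bookkeeping (the subdivision and vertex-trace reductions, where one should also check the loop case and that ``first hit of $U$'' coincides for \BM\ and for its trace because $U$ consists of vertices) or the one-step computation inside $H$.
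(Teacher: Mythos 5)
Your proof is correct, but note that the paper does not prove \Lr{trpr} at all: it is quoted as an external result (\cite[Exercise~2.54]{LyonsBook}), so there is no internal argument to compare against; what you have written is a complete, self-contained proof of the cited fact. Your route is the standard one and is sound: subdivision plus the Walsh--spider property \ref{bmii} identifies the vertex trace of \BM\ with the random walk for the conductances $c=1/\ell$ (exactly the correspondence the paper itself invokes in \Sr{SecOccu}), node elimination (Schur complement of the weighted Laplacian) reduces to a network on $W=U\cup\{o\}$ while preserving the resistance matrix and the first-hit law, and your linear-algebra step is right: from $R(x,y)=(e_x-e_y)^{\top}L^{+}(e_x-e_y)$ and $L^{+}\mathds 1=0$ one gets $\operatorname{tr}(L^{+})=\frac{1}{2n}\sum_{x,y}R(x,y)$, then the diagonal, then all of $L^{+}$, and $L=(L^{+})^{+}$, so the reduced conductances are pinned down --- which is in fact stronger than the lemma requires. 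The only imprecision is the phrase ``reaches $U$ in a single step'': the trace of the $G$-walk on $W$ may make self-transitions at $o$ (the walk can leave $o$, wander outside $W$, and return to $o$ before meeting $U$), so it need not coincide step-for-step with the walk on the loopless eliminated network $H$; but since $V(H)\setminus U=\{o\}$, such loops only delay the exit and the first vertex of $U$ visited is still distributed proportionally to $c_H(o,\cdot)$, so the conclusion stands. It is worth remarking that a shorter proof of the lemma as stated is available (one only needs that hitting probabilities are functions of the resistances, not that the whole network $H$ is recoverable), e.g.\ via explicit formulas for exit probabilities in terms of $R$ on $W$, but your argument buys a cleaner conceptual statement: the pairwise resistances determine the entire reduced network, hence every quantity of the stopped process that depends only on it.
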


\section{Existence} \label{secCon}

In this section we prove the existence part of \Tr{thmain}, in other words, the existence of an accumulation point in 
$\cm(C)$ of every sequence \seq{B} \st\ $B_n$ is standard \BM\ on a graph $G_n\subseteq X$ and \seq{G} is a \defi{\gseq}:
a \gseq\ of $X$ is a sequence $\seq{G}$ of finite graphs that are subspaces of $X$ satisfying the following two properties:
\begin{enumerate}
\item \fe\ edge $e\in E(G_n)$ the length $\ell(e)$ of $e$ in $G_n$ coincides with the length of the corresponding arc of $X$;
\item \label{comps} {\Fe\ finite edge-set $F$ of $X$, and every component $C$ of $X\sm F$, \ti\ a unique component of $G_i\sm F$ meeting $C$ for almost all $i$.}
\end{enumerate}

The existence of \gseq s was established in \cite{gl}. In fact, we can furthermore assume that each $G_n$ is connected, and that $\gn \subseteq G_{n+1}$ \fe\ $n$, although it will not make a formal difference for our proofs. It is also shown in \cite{gl} that \ref{comps} implies that $\bigcup G_n$ contains every edge of $X$ and is dense in $X$.


So let us fix such a sequence \seq{G}.
\Fe\ $n\in\N$ and $o_n\in G_n$, let $\mu_{n,o}$ be the measure on $C$ corresponding to standard \BM\ on $G_n$ starting at the point $o_n$. Let 
$$\Gamma:= \{\mu_{n,o} \mid n\in\N, o\in G_n\}.$$
The following result shows that this family of measures has accumulation points in $\cm(C)$, which we think of as candidates for our \BM\ on $X$. We will show in \Sr{SeqUniq} that if the $o_n$ converge to a point of $X$, then $\Gamma$ has a unique accumulation point.

\begin{lemma} \label{exist}
The family $\overline{\Gamma}$ is compact (\wrt\ the weak topology). 
\end{lemma}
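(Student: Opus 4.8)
The plan is to verify the tightness criterion of \Lr{Parth2.2}: it suffices to show that for every $\eps,\rho>0$ there is an $\eta>0$ such that $\mu_{n,o}(\{p \mid \omega_p(\eta)>\rho\})<\eps$ for every $n\in\N$ and every $o\in G_n$. The key structural input is \Prr{mineps}: there is a finite edge-set $R_\rho$ of $X$ such that any topological path whose endpoints are more than $\rho$ apart must traverse one of the (finitely many) edges in $R_\rho$. Consequently, if a sample path $p$ of $B_n$ has a time subinterval $[t,t']$ of length at most $\eta$ on which $p$ moves more than $\rho$, then on that subinterval $B_n$ must traverse (the intersection with $G_n$ of) some edge $e\in R_\rho$ completely — here we use that the lengths of edges of $G_n$ agree with those of the corresponding arcs of $X$, and that, by property \ref{comps} of a \gseq\ together with the subdivision in the proof of \Prr{mineps}, for $n$ large every edge of $R_\rho$ is traversed by a path in $G_n$ between the corresponding endvertices. (For the finitely many small $n$ where this fails we can shrink $\eta$ further, or simply absorb them since finitely many measures are automatically tight.)

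The heart of the estimate is therefore: control, uniformly in $n$ and in the starting point $o_n$, the probability that standard \BM\ on $G_n$ traverses some fixed edge (an interval of length $\ge \ell_0 := \min_{e\in R_\rho}\ell(e)/6$, say, after the subdivision) within a time window of length $\eta$. First I would reduce to a single edge $e$ by a union bound over the finitely many $e\in R_\rho$. For a fixed edge, I would split according to whether the motion happens inside the open edge or involves its endvertices. Property \ref{bmiv} of standard \BM\ says that starting from an interior point $p$ of an edge, the expected time to traverse an interval of length $\ell$ on either side of $p$ is $\ell^2$; combined with the strong Markov property and a Markov/Chebyshev-type argument (or a geometric decomposition of $[0,T]$ into $\lceil T/\eta\rceil$ windows), this gives that the probability of a $\rho$-displacement within any window of length $\eta$ is small, uniformly, once $\eta$ is small relative to $\ell_0^2$ and $\rho$. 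Near a vertex $v$, property \ref{bmii} bounds the probability of exiting through a particular incident edge, and property (3) (expected exit time of small balls tends to $0$) lets one handle the short initial segment before the path leaves a small ball. Putting these together, for fixed $\eps,\rho$ one obtains a threshold $\eta=\eta(\eps,\rho)$, independent of $n$ and of $o_n$, as required.

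The main obstacle I anticipate is the \emph{uniformity over all $n$ simultaneously}: the graphs $G_n$ may have arbitrarily short edges and arbitrarily many of them, so one cannot directly invoke a fixed finite graph's behaviour. The resolution is exactly that \Prr{mineps} replaces ``travel distance $\rho$'' by ``fully traverse one of finitely many edges of length bounded below by $\ell_0$'', and that the relevant traversal-time estimates \ref{bmiv}, \ref{bmii}, (3) are stated intrinsically in terms of edge lengths and hence transfer to every $G_n$ with the same constants. A secondary technical point is that the edges of $R_\rho$ live in $X$, not a priori in $G_n$, and an edge of $R_\rho$ may be split among several edges of $G_n$; here property \ref{comps} of the \gseq\ (which forces $G_n$ to follow the coarse structure of $X\sm R_\rho$ for large $n$) guarantees that a genuine traversal of the corresponding sub-arc of $X$ by $B_n$ still costs the expected time $\ge \ell_0^2$, so the estimate goes through. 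Once tightness is established, \Lr{Parth2.2} yields that $\overline{\Gamma}$ is compact.
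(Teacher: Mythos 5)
Your overall skeleton matches the paper's proof: verify the condition of Lemma~\ref{Parth2.2}, use Proposition~\ref{mineps} to convert a displacement of size $\rho$ into the complete traversal of one of finitely many fixed edges of length at least $\ell_0$, decompose $[0,T]$ into $\lceil T/\eta\rceil$ windows, and bound the traversal probability per window uniformly in $n$ and in the starting point. The gap is in the key quantitative step. You propose to obtain the per-window bound from property~\ref{bmiv} (expected traversal time $\ell^2$) by a ``Markov/Chebyshev-type argument''. Markov's inequality applied to the traversal time $\tau$ bounds $\Pr[\tau\ge a]$, i.e.\ the upper tail; it says nothing about $\Pr[\tau\le\eta]$, which is what you need --- a random variable with mean $\ell_0^2$ can still be $\le\eta$ with substantial probability. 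Even if you replace this by a genuine small-time maximal estimate of Chebyshev/Doob type, say $\Pr[\max_{t\in[0,\eta]}|W(t)|\ge\ell_0/4]\le 16\eta/\ell_0^2$, the bound is only linear in $\eta$, and after the union bound over the $\lceil T/\eta\rceil$ windows you are left with a quantity of order $T/\ell_0^2$, which does not tend to $0$ as $\eta\to0$. The per-window probability must be $o(\eta)$ for the argument to close.

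The paper gets exactly this by using that inside an edge the process is literally standard one-dimensional Brownian motion: after reducing (via half- and quarter-edges and the hitting times of the midpoints) to motion started at a midpoint, the per-window probability is at most $\Pr[\max_{t\in[0,\eta]}|W(t)|>\ell_0/4]$, which by the reflection principle is comparable to the Gaussian tail $\Pr[|W(\eta)|>\ell_0/4]$ and hence decays faster than any power of $\eta$; multiplying by $\lceil T/\eta\rceil$ still gives $0$ in the limit, uniformly in $n$ and in the starting point. (Your step could be repaired in the same spirit without citing the reflection principle by chaining roughly $\ell_0/\sqrt{\eta}$ successive crossings, each of probability bounded away from $1$, to produce sub-Gaussian decay --- but some input of this strength is indispensable, and the expectation from property~\ref{bmiv} alone does not supply it.) A minor further remark: the appeal to property (ii) of a graph approximation and the ``large $n$'' caveat are unnecessary --- Proposition~\ref{mineps} applies to the sample paths of $B_n$ simply because they are topological paths in $X$, and the estimate above is uniform over all $n$ and all starting points; note also that for a fixed $n$ the family $\{\mu_{n,o}\mid o\in G_n\}$ is infinite, so ``finitely many measures are automatically tight'' would not cover the exceptional $n$ in any case.
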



\begin{proof}

 
Throughout this proof $B_n$ is a random sample path in $C$ chosen according to some of our measures $\mu\in \Gamma$, and probabilities refer to that measure.

We are going to show that our family $\Gamma$ satisfies the condition of \Lr{Parth2.2}, that is, for any $\eps>0$
 \labtequ{cond}{$\lim_{\del\to0}\Pr\left(\sup_{t,s<T;|t-s|<\del}d(B_n(t),B_n(s))>\eps\right)=0$ uniformly in $n$.}

So fix $\eps>0$. Let $R=R_\eps$ be a finite set of  edges as in \Prr{mineps}, and let $\eps_1=\min\{\ell(e) \mid e\in R \}$.

\comment{
  We can split the expression in \eqref{cond} into two parts, according to whether $B_n(t),B_n(s)$ lie in some $U_i$ or not. Note that if $d(B_n(t),B_n(s))>\eps$ then they can not both lie in a common $U_i$, for $\operatorname{diam}(U)<\eps/8$; thus we can write
 \begin{eqnarray*}
  \Pr[\sup_{t,s<T; |t-s|<\del} d(B_n(t),B_n(s))>\eps] \\\le \Pr[d(B_n(t),B_n(s))>\eps\mbox{ for }|t-s|<\del 
 \mbox{ \st } B_n(t)\notin U^+ ]\\
  +\Pr[d(B_n(t),B_n(s))>\eps\mbox{ for }|t-s|<\del 
 \mbox{ \st } B_n(t)\in U_1^+\neq U_2^+\ni B_n(s)]\\
 \end{eqnarray*}

  Note that in both cases, if $d(B_n(t),B_n(s))>\eps$ then the particle has to traverse an edge of $S$: in the first case, i.e.\ when $B_n(t)\notin U^+$, then ... . In the second case, i.e.\ when $B_n(t)\in U_1^+\neq U_2^+\ni B_n(s)$, then $S$ disconnects $U_1^+$ from $U_2^+$ so clearly the particle has to traverse an edge of $S$.
}

Thus we have the following bound for the probability appearing in \eqref{cond}:

\begin{eqnarray*}
\Pr[\sup_{t,s<T; |t-s|<\del} d(B_n(t),B_n(s))>\eps] \le \\ 
\Pr[B_n([t,t+\del])\mbox{ traverses an edge }e\in R\mbox{ for some }t\in[0,T-\del]]. 
\end{eqnarray*}

It remains to show that the last probabilities converge to 0  uniformly in $n$ as $\del \to 0$. For this we will use the fact that each brownian motion $B_n$ in the interior of an edge behaves locally like standard Brownian Motion $W$ on the real line. Let us make this more precise. Let $R'$ be the set of half-edges of $R$, that is, each element of $R'$ is a open subinterval of an edge of $R$ from an endpoint to the midpoint. Let us subdivide the time interval $[0,T]$ into the $\ceil{T/\del}$ subintervals $I_0,I_1,\ldots I_k$ of the form $I_i= [i\del, (i+1)\del]$; note that each $I_i$ has duration at most $\del$. Then, if $B_n$ traverses an edge of $R$ in time $\del$ at some point, then there is a time intervals $I_i$ during which $B_n$ traverses an element of $R'$. Thus we can write

\begin{align*}
 \Pr[B_n([t,t+\del])\mbox{ traverses an edge }e\in R\mbox{ for some }t\in[0,T]]\\
 \le \sum_{i}\Pr[B_n([i\del,(i+1)\del])\mbox{ traverses an edge }e\in R'].
\end{align*}
Now denote by $M$ the set of the midpoints of elements of $R'$, and by $\tau_n^i=\inf\{t\ge i\del:B_n(t)\in M\}$ the associated hitting times. Then we can bound the last expression by
$$
 \sum_{i}\Pr[B_n([\tau_n^i,\tau_n^i+\del])\mbox{ traverses an edge }e\in R''],\\
$$
where $R''$ is the set of half-edges of $R'$, in other words, the `quarter-edges' of $R$.

Now since inside an edge $B_n$ behaves like standard Brownian Motion $W$, the above sum is at most
$$\ceil{{T/ \del}} \Pr[\max_{t\in[0,\del]} |W(t)|>\eps_1/4)=\ceil{{T/ \del}} \Pr[|W(\del)|>\eps_1/4),$$

by reflection principle \cite[Theorem 2.21]{PeresMoerters}. This expression  converges to 0 with $\del$, since the second factor decay rapidly with \del . Moreover, it does not depend on $n$, and so it yields \eqref{cond} as desired.

\end{proof}

Remark: if $\seq{\mu}$ is a  convergent sequence of elements of $\Gamma$ with limit $\mu$, then \fe\ $x\in X$, 
$$\Ex_{\mu}[ d(b(0),x)]  = \lim_n \Ex_{\mu_n}[ d(b(0),x)] .$$
In particular, if the starting points of the $\mu_n$ converge to $x$, then the starting point of $\mu$ is $x$ a.s.

\comment{
	\begin{lemma}\label{ellh}	
Let $H\subseteq G$ be finite graphs, and $a,z\in V(G - H)$. Then the expected time spent in $H$ by SRW in \g during a commute trip between $a$ and $z$ is at most $2\ell(H)\ell(G)$.
\end{lemma}
\begin{proof}
It is known that for SRW on any finite graph \g the expected time of traversals of an edge during a commute trip between any two fixed vertices is the same for any two edges of the graph \cite{CRRST,AKLLR}. Thus, if $C$ is the expected duration of the commute trip, then the expected time spent in $H$ is $C \frac{\ell(H)}{\ell(G)}$. By \Lr{com} we easily have $C\leq 2\ell^2(G)$, yielding the desired bound $2\ell(H)\ell(G)$.
\end{proof}
}

\section{Occupation time of small subgraphs} \label{SecOccu}

A \defi{subgraph} $H$ of a graph \g is a subspace of \g that is a graph itself. If \g is a metric graph, then we consider $H$ to be a metric graph as well, with its edge-lengths induced from those of \g in the obvious way. Note that the vertices of $H$ need not be vertices of \G; an interval of an edge of \g can be an edge of $H$.

For a (finite) metric graph \g and standard \BM\ $B$ on \G, the \defi{occupation time} $OT_t(H)= OT_t(H;B)$ of a subgraph $H\subseteq G$ up to time $t$ is defined to be the amount of time $\int_0^t \mathds 1_{\{B(s)\in H\}} ds$ spent by $B$ in $H$ in the time interval $[0,t]$. We define the occupation time of $H$ for random walk on $G$ similarly.

The following lemma shows that the occupation time of a subgraph $H$ of $G$ is short with high probability when the length $\ell(H)$ is small compared to $\ell(G)$, and in fact can be bounded above by a function depending only on the proportion of the lenghts but not on the structure of $G$ and $H$.
\begin{lemma}\label{ellh2}	
\Fe\ $L,T,\eps\in \R_{>0}$ \ti\ a small enough $\ell\in \R_{>0}$ \st\ \fe\ finite metric graph \g with $\ell(G)\geq L$ and every
subgraph $H\subseteq G$ with $\ell(H)\leq \ell$, we have $OT_T(H)< \eps$ with probability at least $1-\eps$.
\end{lemma}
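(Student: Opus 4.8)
The plan is to reduce the statement to a bound on the occupation time of $H$ for the discrete-time random walk on $G$, via the commute-time identity of \Lr{com}, and then to control the continuous-time overshoot inside the (short) edges of $H$. First I would set up a renewal/commute-trip decomposition: fix a vertex $a$ of $G$ not in $H$ (if $\ell(H)$ is small we may assume such a vertex exists, or else work with a point $a$ of $G\setminus H$ taken as an artificial vertex), and fix a point $z$; by \Lr{com} the expected duration of a commute trip between $a$ and $z$ is $2\ell(G)R(a,z)\le 2\ell(G)^2$, since effective resistance is bounded by graph distance which is bounded by $\ell(G)$. It is classical (Chandra et al.\ \cite{CRRST}) that for SRW the expected number of traversals of each \emph{directed} edge during a commute trip between two fixed vertices is the same for all directed edges; hence during one commute trip the expected number of visits to $H$ (weighted appropriately) is proportional to $\ell(H)/\ell(G)$, so the expected occupation \emph{of the discrete chain} in $H$ during a commute trip is at most a constant times $\ell(H)\ell(G)$.

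Next I would pass from discrete occupation to the genuine continuous-time occupation time $OT$. The point is that each time $B$ is inside an edge $e$ of $H$, it behaves like standard one-dimensional \BM, and by property \ref{bmiv} of standard \BM\ the expected time to traverse an interval of length $\ell$ is $\ell^2$; so each ``excursion'' contributing to $OT_T(H)$ has expected length at most $\ell(H)^2$ (or can be bounded using the structure of $H$ as a metric graph of total length $\le\ell$). Combining: over the time interval $[0,T]$ there are, in expectation, $O(T/\ell(G)^2)$ commute trips — more carefully, one bounds $\Ex[OT_T(H)]$ directly by a Wald-type argument using the renewal structure at returns to $a$ — yielding $\Ex[OT_T(H)] \le c\, T\, \ell(H)/L$ for a universal constant $c$ (using $\ell(G)\ge L$). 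Then Markov's inequality gives $\Pr[OT_T(H)\ge \eps] \le \Ex[OT_T(H)]/\eps \le c T \ell(H)/(L\eps)$, which is less than $\eps$ as soon as $\ell := \ell(H)$ is chosen small enough in terms of $L,T,\eps$ (namely $\ell < L\eps^2/(cT)$). This choice of $\ell$ depends only on $L,T,\eps$ and not on $G$ or $H$, as required.

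The main obstacle I anticipate is making the renewal decomposition fully rigorous in continuous time when $H$ need not contain any vertex of $G$ and when $B$ starts at an arbitrary point: one has to choose an anchor vertex $a\notin H$, argue $\ell(G\setminus H)$ is a positive fraction of $\ell(G)$ (true once $\ell(H)<\ell(G)/2\le\ell(G)-L/2$ say, which holds for $\ell$ small since $\ell(G)\ge L$ — so at least $L/2$ of the length lies outside $H$ and in particular $G\setminus H$ is nonempty), and then handle the first (incomplete) commute trip and the initial segment before the first visit to $a$ separately; these contribute only $O(\ell(G)^2)$ to the expectation, which is \emph{not} uniformly small, so instead one should bound $\Ex[OT_T(H)]$ directly by summing, over the excursions of $B$ away from a suitable reference set, the expected occupation per excursion times the expected number of excursions in time $T$, rather than literally counting full commute trips. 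An alternative, perhaps cleaner, route is to avoid commute trips entirely: use \Lr{trpr}/effective-resistance estimates to show that starting from any point, the probability of hitting a deep interior point of $H$ before travelling distance $\gg\sqrt{\ell(H)}$ away is small (this is where \Prr{mineps}-style geometry would help bound how often $B$ can ``come back'' to $H$), and combine with property \ref{bmiv} to bound the time spent per visit; I would try the commute-time argument first since \Lr{com} is already available, and fall back to the hitting-probability argument if the edge cases become unwieldy.
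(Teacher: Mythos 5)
Your overall strategy --- reduce to the fact that the walk's stationary occupation of $H$ is proportional to $\ell(H)/\ell(G)$, via \Lr{com} and the CRRST edge-traversal symmetry, then finish with Markov's inequality --- is the same family of ideas as the paper's, but the step where you pass from per-commute-trip (or per-excursion) estimates to a bound on $\Ex[OT_T(H)]$ over the fixed horizon $[0,T]$ is exactly where the argument breaks, and you do not supply a working substitute. Knowing that the expected duration of a commute trip is at most $2\ell(G)^2$ gives no upper bound on the number of trips, or of entries into $H$, completed by time $T$: trips can be arbitrarily short with substantial probability, and visits to $H$ can be arbitrarily frequent (think of $H$ scattered in many tiny pieces, or a tiny neighbourhood of a high-degree vertex). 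So ``expected occupation per excursion times expected number of excursions in $[0,T]$'' is not available; renewal-type identities bound the expected number of cycles in $[0,T]$ from below, not above, in terms of the mean cycle length, and your fallback via hitting probabilities is a hope rather than an argument. Relatedly, the bound $\Ex[OT_T(H)]\le c\,T\,\ell(H)/L$ you announce is false for small $T$: if the walk starts at a point $o$ in the middle of an interval of $H$ of length $\ell$, then $\Ex[OT_T(H)]$ is of order $\ell\sqrt{T}$, which exceeds $cT\ell/L$ as soon as $\sqrt{T}\ll L$. (A weaker bound of the form $(T+\mathrm{const})\,\ell/L$ would still suffice for the lemma, but it has to be actually derived.)

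The paper sidesteps the excursion-counting problem with one stopping-time device that is missing from your proposal: let $\tau$ be the first return to the starting point $o$ \emph{after} time $T$. Then $OT_T(H)\le OT_\tau(H)$, and one has the exact identity $\Ex[OT_\tau(H)]=\Ex[\tau]\,\ell(H)/\ell(G)$, which makes no reference to how often the particle visits $H$ before time $T$. (The paper proves this identity by passing to an equal-length subdivision $G'$, sandwiching $H$ between subgraphs $H_<\subseteq H\subseteq H_>$ whose boundary points are midpoints of edges of $G'$, comparing \BM\ with the induced continuous- and discrete-time walks, using a reflection argument and a rational-lengths approximation --- the careful version of the step you compress into ``weighted appropriately'' plus property \ref{bmiv}.) Finally, $\Ex[\tau]\le T+{}$(commute time between $o$ and $B(T)$), which \Lr{com} bounds by a quantity not depending on $H$, and Markov's inequality applied to the event $\{OT_T(H)\ge\eps\}$ --- exactly as you intended --- finishes the proof. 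So your finishing move is right, but the heart of the proof, the identity at the stopping time $\tau$, is absent.
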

\begin{proof}
Let $\tau$ be the random time of the first return to the starting point $o$ after time $T$. We claim that
$$ \Ex_B [OT_\tau(H)] = \Ex_B [\tau] \frac{\ell(H)}{\ell(G)},$$
Where the subscript $B$ stands for the fact that the expectation is taken with respect to standard \BM\ on \G.
For this, we use the fact that for simple random walk $R$ on \g  it is well-known \cite{CRRST}  that the expected occupation time $\Ex_R [OT_\tau(K)]$ up to time $\tau$ in any subgraph $K$ equals $\Ex_R [\tau]$ times the stationary distrubution $\pi$ integrated over $K$ (this follows directly from renewal theory \cite[Proposition~7.4.1]{Ross}). That is, we have 
\labtequ{exr}{$ \frac{\Ex_R [OT_\tau(K)]}{\Ex_R [\tau]} = \pi(K) $,} 
where $\pi(K) = \sum_{v\in V(K)} \pi(v)$. 

Now let us assume that all edge lengths of $G$ are rational. Then, we can find a subdivision $G'$ of $G$ \st\ all edges of $G'$ have the same length. Formally, $G'$ is a metric graph isometric to $G$ as a metric space. 
Clearly, we can find subgraphs $H_<,H_>$ of $G$ such that $H_< \subseteq H \subseteq H_>$ and each boundary vertex of $H_<$ or $H_>$ is a midpoint of an edge of $G'$, where a boundary vertex of $H_<$ is one incident with the complement of $H_<$, i.e.\ a point in $H_< \cap \cls{(G \sm H_<)}$. Thus, since the stationary distribution $\pi$ is proportional to the vertex degree, and since every edge of $G'$ has the same length, we have 
\labtequ{piell}{$\pi(H_<) = \frac{\ell(H_<)}{\ell(G)}$ and $\pi(H_>) = \frac{\ell(H_>)}{\ell(G)}$.}

Note that \BM\ $B$ on $G$ naturally induces a continuous-time random walk $Z(t), t\in \R_+$ on $G'$, and also a discrete time random walk $R(i), i\in \N$. It follows from \ref{bmii} in \Sr{Megr} that the transition probabilities of $Z$ and $R$ coincide with the transition probabilities of the usual random walk on $G'$, where the probability to go from a vertex $v$ to each of its neighbours $w$ is $c(vw)/ \sum_{y\tilde v} c(vy)$ if we set $c(vy) =1/\ell(vy)$ \fe\ edge $vy$ incident with $v$.

It is proved in \cite[Section 5.1]{edgecov} that, \fe\ subgraph $K$ of \G, in particular for $K= H_<$ or $K=H_>$, we have
\labtequ{zr}{$ \frac{\Ex_R [OT_\tau(K)]}{\Ex_R [\tau]} =  \frac{\Ex_Z [OT_\tau(K)]}{\Ex_Z [\tau]} .$}

Note that we have $\Ex_Z [\tau] =  \Ex_B [\tau]$ by the definition of the continuous time random walk $Z$. Moreover, using the fact that  each boundary vertex of $H_<$ or $H_>$ is a midpoint of an edge of $G'$, it is possible to prove that 
$$\Ex_Z [OT_\tau(H_<)] = \Ex_B [OT_\tau(H_<)] \text{ and }  \Ex_Z [OT_\tau(H_>)] = \Ex_B [OT_\tau(H_>)]$$ because for each edge $e=xy$ of $G'$, the expected number of traversals of $e$ from $x$ to $y$ up to time $\tau$ equal the expected number of traversals of $e$ from $y$ to $x$ (this follows from the same arguments as in the proof of \ref{exr}), and \BM\ on an interval from an endpoint is equidistributed with its reflectection around the midpoint. Combining this with \eqref{zr}, \eqref{piell} and \eqref{exr}, we obtain
$$ \frac{\Ex_B [OT_\tau(H_<)]}{\Ex_B [\tau]} = \frac{\ell(H_<)}{\ell(G)} \text{ and }   \frac{\Ex_B [OT_\tau(H_>)]}{\Ex_B [\tau]} = \frac{\ell(H_>)}{\ell(G)}.$$

Since $\Ex_B [OT_\tau(H_<)] \leq \Ex_B [OT_\tau(H)] \leq \Ex_B [OT_\tau(H_>)] $ by the choice of $H_<,H_>$, and  $\frac{\ell(H_<)}{\ell(G)},  \frac{\ell(H_>)}{\ell(G)}$ can be made arbitrarily close to $ \frac{\ell(H)}{\ell(G)}$ by making the subdivision $G'$ fine enough, our claim $ \Ex_B [OT_\tau(H)] = \Ex_B [\tau] \frac{\ell(H)}{\ell(G)}$ follows in the case that all edge lengths of $G$ are rational. The general case can now be handled using a standard approximation argument.


\medskip

Thus if $H, G$ are as in the statement, then, since $T \leq \tau$, we obtain
$$\Ex_B [OT_T(H)]\leq \Ex_B [OT_\tau(H)] \leq \Ex_B [\tau] \frac{\ell}{L}.$$
Now if $\Pr[OT_T(H)\geq \eps] > \eps$ then $\Ex_B [OT_T(H)] > \eps^2$. Combined with the above inequality, this yields 
$$\Ex_B [\tau] > \eps^2 \frac{L}{\ell}.$$
On the other hand, applying the commute time formula of \Lr{com} to the pair of points $o, B(T)$ where $B(T)$ is the random position of the particle at time $T$, we obtain $\Ex_B [\tau]  \leq T + 2L^2$ since, easily, $R(a,z) \leq L$ for every two points $a,z$ of $G$. The latter two inequalities imply $T+ 2L^2 \geq \eps^2 \frac{L}{\ell}$, and so letting $\ell =  \frac{\eps^2L}{T+ 2L^2}$ proves our assertion.
\end{proof}

The following lemma is of similar flavour

\begin{lemma} \label{Ldel} 
Let $X$ be a \gl\ continuum with $\Hm(X)< \infty$, and \seq{G} a \gseq\ of $X$. For any time $T_0$ and $p\in  X$ lying in an edge of $X$,  we have   
$$\lim_{r\to0}\sup_n\Pr(B_n(T_0) \text{ is in the ball of radius $r$ centred at }p)=0.$$
\end{lemma}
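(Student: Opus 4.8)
The plan is to reduce the statement to the occupation-time estimate of \Lr{ellh2} together with a pigeonhole argument over disjoint balls. Fix $T_0$ and let $p$ lie in the interior of an edge $e$ of $X$. Since $p$ is in the interior of an edge of length $>0$, for all sufficiently small $r$ the ball $D_r:=\{x\in X: d(x,p)<r\}$ is an open interval of $e$ of length at most $2r$, and hence its closure is contained in $e$. The idea is: if $B_n(T_0)$ lands in $D_r$ with non-negligible probability, then, because $B_n$ cannot leave the edge $e$ quickly without traversing a short interval (and standard \BM\ on a real interval of length $\ell$ needs expected time $\ell^2$ to traverse it, by property \ref{bmiv} of \Sr{Megr}), the process $B_n$ must spend a non-negligible expected amount of time in a small neighbourhood $H$ of $p$ inside $e$. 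Quantitatively: starting from any point of $D_{r}$, with probability at least $1/2$ the process stays in $D_{2r}$ for a time of order $r^2$ (reflection principle), so $\Ex[OT_{T_0+r^2}(D_{2r}\cap e)] \gtrsim r^2 \cdot \Pr(B_n(T_0)\in D_r)$, provided $r^2\le$ some fixed constant and $T_0+r^2\le T$ where $T$ is the time horizon of $C$.

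Next I would apply \Lr{ellh2} to $H:=\cls{D_{2r}}\subseteq G_n$ (an interval of an edge of $G_n$, hence a subgraph) with the parameters $L:=\Hm(X)/2$, time horizon $T:=T_0+1$, and a target accuracy $\eps$. For this to be legitimate I need a uniform lower bound $\ell(G_n)\ge L$; this holds for all large $n$ because, by property \ref{comps} of a \gseq\ and the density of $\bigcup G_n$ in $X$, the lengths $\ell(G_n)$ converge to $\Hm(X)$ (indeed any fixed finite edge-set of $X$ of total length close to $\Hm(X)$ eventually sits inside $G_n$ up to subdivision). The finitely many remaining $n$ can be absorbed since each individual $\mu_n$ is a fixed measure and $\Pr(B_n(T_0)\in D_r)\to 0$ as $r\to0$ for each fixed $n$ by continuity of the single path $B_n$ and the fact that $\bigcap_r D_r=\{p\}$. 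Given $\eps>0$, \Lr{ellh2} yields $\ell_0>0$ such that whenever $\ell(H)\le \ell_0$ we have $\Ex[OT_{T_0+1}(H)] \le \Ex[OT_{T_0+1}(H)] $ bounded by a quantity forcing $\Pr(OT\ge\eps)\le\eps$, hence $\Ex[OT_{T_0+1}(H)]\le \eps + \eps\cdot(T_0+1)$. Choosing $r$ small enough that $4r\le \ell_0$ and $r^2\le 1$, and combining with the lower bound $\Ex[OT_{T_0+r^2}(H)]\gtrsim r^2\Pr(B_n(T_0)\in D_r)$ from the previous paragraph, gives $\Pr(B_n(T_0)\in D_r)\lesssim (\eps + \eps(T_0+1))/r^2$. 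This is not yet uniform in $r$, so I instead run the argument the other way: fix the desired bound $\eps'$ on $\Pr(B_n(T_0)\in D_r)$, pick $r$ first, then apply \Lr{ellh2} with target accuracy $\eps:= \eps' r^2/(2(T_0+2))$ to extract the requisite smallness of $\ell(H)$, i.e. shrink $r$ further until $4r\le\ell_0(\eps)$; a short monotonicity check shows this can always be arranged, giving the claimed limit.

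The main obstacle I anticipate is the lower bound $\Ex[OT_{T_0+r^2}(D_{2r}\cap e)]\gtrsim r^2\,\Pr(B_n(T_0)\in D_r)$ \emph{uniformly in $n$}. The subtlety is that $D_r$ as a subset of $G_n$ might not be cleanly an interval of a single $G_n$-edge if $G_n$ has a vertex near $p$; but for $r$ small enough that the ball of radius $2r$ around $p$ in $X$ contains no endvertex of $e$ and no branch point, property \ref{comps} of the \gseq\ forces $G_n$ to have no vertex in that ball for all large $n$ (any such vertex would be a point of $X\setminus(\text{edges})$ inside the edge $e$, a contradiction), so inside $D_{2r}$ the process $B_n$ genuinely behaves like standard \BM\ on a real interval. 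Then the lower bound is the elementary statement that \BM\ on $\R$ started at the centre of an interval of length $4r$ occupies it for expected time $\Omega(r^2)$ before exiting — combined with the strong Markov property at time $T_0$ to condition on $\{B_n(T_0)\in D_r\}$. The rest is bookkeeping of the quantifiers, which is routine.
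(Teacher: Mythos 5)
There is a genuine gap, and it sits exactly at the step you wave through as ``a short monotonicity check''. Your scheme needs two things simultaneously: $\eps\le \eps' r^2/(2(T_0+2))$, so that the bound $\Ex[OT_{T_0+1}(H)]\le\eps(T_0+2)$ beats your lower bound $c\,r^2\,\Pr[B_n(T_0)\in D_r]$, and $4r\le\ell_0(\eps)$, so that \Lr{ellh2} applies to $H=\cls{D_{2r}}$. These are incompatible as $r\to0$, no matter in which order you pick the parameters. The proof of \Lr{ellh2} produces $\ell_0(\eps)=\eps^2L/(T+2L^2)$, so with $\eps\propto\eps' r^2$ the requirement becomes $4r\le C\eps'^2r^4$, false for all small $r$; and no improved version of \Lr{ellh2} can help, since that lemma must hold when $H$ contains the starting point, in which case $OT_T(H)$ dominates the exit time of $H$, which is of order $\ell(H)^2$ with probability bounded away from $0$ --- forcing $\ell_0(\eps)=O(\sqrt{\eps})$ and hence a condition of the form $4r\le C\sqrt{\eps'}\,r$, false for small $\eps'$. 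The failure is one of scales, not bookkeeping: the expected occupation time of an interval of length $\sim r$ up to a fixed horizon is genuinely of first order in $r$, while the event $\{B_n(T_0)\in D_r\}$ only forces occupation of order $r^2$ (or $r\sqrt{T-T_0}$ if you let the excursion run on) times its probability, so the best such a comparison can ever give is $\Pr[B_n(T_0)\in D_r]\lesssim 1/r$, or with the sharper conditional estimate a constant --- never a quantity tending to $0$. Indeed, occupation-time estimates of the type of \Lr{ellh2} are consistent with the law of $B_n(T_0)$ having an atom at $p$, so they cannot possibly yield the lemma. What is needed, and what the paper's proof supplies, is a bound on the transition density at the fixed time $T_0$ that is uniform in $n$: a Nash-type inequality for functions on $G_n$ gives a heat-semigroup bound $\|P^n_{T_0}f\|_\infty\le C(T_0,\Hm(X))\,\|f\|_1$ independent of $n$, i.e.\ the law of $B_n(T_0)$ has a uniformly bounded density with respect to \Hm, whence the probability of the $r$-ball is $O(r)$ uniformly in $n$.

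A secondary, fixable slip: your claim that property \ref{comps} forces $G_n$ to have no vertex near $p$ is wrong --- vertices of the approximating graphs may perfectly well be interior points of the $X$-edge $e$ (edges of $X$ may appear in $G_n$ subdivided, or only partially). This part is harmless for your local comparison with linear \BM, since any $G_n$-vertex interior to $e$ has degree at most $2$ (and a degree-$1$ endpoint only reflects the particle back into the ball), but it should not be argued via $X\sm\bigcup E$. The substantive obstruction is the quantitative one above.
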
 
\note{Konrad: I adapted the assertion to the new set-up. Please read the proof thoroughly and see if it needs changes.}
\begin{proof}
 The proof of this uses a well-known idea going back to Nash \cite{Nash}, however in order to make it self contained we  present  it now.
 
 Let $P^n_t$ be the heat semigroup associated with the Brownian motion $B_n$ on $G_n$ i.e. $P^n_tf(x)=\Ex_x[f(B_n(t))]$, for any bounded function $f$. By duality $P^n_t$ acts on the space of probability measures on $G_n$. Our assertion will be proven if we  show that the $P^n_{T_0}\delta_y$ have  bounded densities with respect to Hausdorff measure $\Hm$ by some constant independent of $n$ and $y\in G_n$, where $\delta_y$ is the Dirac measure at $y$. Since any $\delta_y$ is a weak limit of a probability measure with a density that is continuous on $G_n$ and differentiable inside every edge, it is sufficient to get a uniform bound on $\|P^n_{T_0}f\|_{\8}$.
 
 The idea (cf. \cite{Nash,Haeseler}) is to prove first a Nash type inequality:
 \begin{align}
  \label{Nash}
   \|u\|_2^{6}\le8(c \|u\|_2^2+\|u'\|_2^2)\|u\|_1^4,
 \end{align}
for every continuous function $u$ which is differentiable inside every edge,  where $c=(2\Hm(G_0))^{-2}$.
 It is enough show \eqref{Nash} for $\|u\|_{1}=1$. Since $u$ is continuous  there is $x_0\in G_n$ such that $|u(x_0)|=1/\Hm(G_n)$. Now for any $x\in G_n$ there is a path $\gamma$ connecting $x$ with $x_0$, and so by the Schwarz inequality we have
 $$ u(x)^2-u(x_0)^{2}=\int_{\gamma}2u(y)u'(y)dy\le2\|u\|_2\|u'\|_2,$$
 which implies
  $$u(x)^2\le\sqrt{\Hm(G_n)^{-2}+2\|u\|_2\|u'\|_2}|u(x)|.$$
  Integrating the above inequality we get
  $$\|u\|_2^2\le \sqrt{\Hm(G_n)^{-2}+2\|u\|_2\|u'\|_2}\le\sqrt{\Hm(G_n)^{-1}\|u\|_2^2+2\|u\|_2\|u'\|_2}.$$
By the inequality between the quadratic and arithmetic mean, this implies that
 $$ \|u\|_2^{6}\le{2\Hm(G_0)^{-2}\|u\|_2^2+8\|u'\|_2^2},$$
and so \eqref{Nash} is proved.
 
 Next, following the idea due to Nash \cite{Nash}, we define $U(t)=\|e^{-\delta t}P_tf\|_2^2$. An easy observation (cf. \cite{Kuchment}) gives that $\frac d{dt}U(t)=-2\delta\|e^{-\delta t}P_tf\|_2^2-2\|e^{-\delta t}(P_tf)'\|_2^2$. In view of \eqref{Nash} this leads to a following inequality:
 $$U^3(t)\le -4\frac d {dt}U(t)e^{-4\delta t}\le-4\frac d {dt}U(t),$$
 since $\|P_tf\|_1=1$.
 By elementary computations $U(t)\le (t/2)^{-1/2}$, hence $\|P_tf\|_2\le e^{\delta t/2}(t/2)^{-1/4} $.  The semigroup principle gives $P_{T_0}=P_{T_0/2}\circ P_{T_0/2}$ and by symmetry $\|P_{T_0/2}\|_{1\to 2}=\|P_{T_0/2}\|_{2\to\8}$ therefore $\|P_{T_0}f\|_{\8}\le e^{\delta T_0/2}(T_0/4)^{-1/2}$.
\end{proof}

\section{Uniqueness} \label{SeqUniq}

\comment{
\begin{lemma}[{\cite[Proposition~9.1]{LyonsBook}}] \label{ign}
Let $N=(G,r,p,q,I)$ be a countable network. Then \fe\ \gseq\  $\gn$ the currents $i(G_n)$ converge in $\ell^\infty(E)$.
\end{lemma}
}

The following fact implies that if $\Hm(X)<\infty$ then the \BM\ we constructed in \Sr{secCon} is uniquely determined by $(X,d)$; 
in particular, it does not depend on the choice of the \gseq\ used. 

\begin{theorem} \label{thuniq}
Let $X$ be a \gl\ space with $\Hm(X)<\infty$. Then \fe\ \gseq\ \seq{G}, and any convergent sequence \seq{o} of points of $X$ with $o_n\in G_n$, standard \BM\ $B^n_{o_n}$ from $o_n$ on $G_n$ converges weakly to an element of $\cm$ independent of the choice of \seq{G}.
\end{theorem}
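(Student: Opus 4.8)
By \Lr{exist} the set $\overline{\Gamma}$ is weakly compact, so the sequence $(\mu_n)$ of laws of the $B^n_{o_n}$ has weak accumulation points. Since $C=C_T(X)$ is a Polish space, a Borel probability measure on it is determined by its finite-dimensional distributions, i.e.\ by its pushforwards under the \emph{continuous} evaluation maps $e_{t_1,\dots,t_k}\colon b\mapsto (b(t_1),\dots,b(t_k))$, and these pushforwards are preserved under weak limits. Hence it suffices to show that for every $0\le t_1<\dots<t_k\le T$ the laws of $\bigl(B^n_{o_n}(t_1),\dots,B^n_{o_n}(t_k)\bigr)$ converge weakly in $\cm(X^k)$ to a limit depending only on $(X,d)$ and $o:=\lim o_n$. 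This forces all accumulation points of $(\mu_n)$ to coincide, so $(\mu_n)$ itself converges, to a \gseq-independent limit.

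\textbf{Watching $B^n$ through a finite net.}
Fix $\eps>0$. By \Lr{arc} choose a finite edge-set $S$ of $X$ with every component of $X\sm S$ of diameter $<\eps$; subdivide the edges of $S$ so that each has length $<\eps$, and let $U=U_\eps$ be the finite set of all resulting endvertices and midpoints; we may assume $U\subseteq G_n$ for all large $n$. Then every component of $X\sm U$ has diameter $<\eps$, so any continuous path, between consecutive visits to $U$, moves at most $\eps$ in $X$. For each $j$ let $\tau^n_j$ be the last time in $[0,t_j]$ at which $B^n_{o_n}$ visits $U$ (with an obvious convention, using $o_n\to o$, if it has not yet met $U$). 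As $U$ is closed and $B^n$ continuous, $B^n(\tau^n_j)\in U$ and $d\bigl(B^n(t_j),B^n(\tau^n_j)\bigr)\le\eps$, so the laws of $\bigl(B^n(t_1),\dots,B^n(t_k)\bigr)$ and of the $U^k$-valued vector $\bigl(B^n(\tau^n_1),\dots,B^n(\tau^n_k)\bigr)$ can be coupled to differ pointwise by at most $\eps$. Thus it suffices to show the latter converges and then let $\eps\to0$.

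\textbf{Identifying the limit of the net data.}
For each fixed $n$, $B^n$ is standard \BM\ on the metric graph $G_n$, hence has the \smp; watched at its successive visits to \emph{distinct} points of $U$ it is therefore a continuous-time Markov chain on the fixed finite set $U$. Its jump probabilities are, by (the reasoning behind) \Lr{trpr}, determined by the finitely many effective resistances $\{R_{G_n}(u,u'):u,u'\in U\}$. The time spent between consecutive such visits — which governs the clock values $\tau^n_j$, hence the law of $\bigl(B^n(\tau^n_1),\dots,B^n(\tau^n_k)\bigr)$ — is pinned down in the limit by the occupation-time estimates \Lr{ellh2} and \Lr{Ldel}: these show the time parametrization is the one with ``speed measure'' $\Hm$ (to first order, the occupation time of a small subgraph is its $\Hm$-length times the elapsed time) and that the process does not linger near any single point of an edge, while the commute-time formula \Lr{com} bounds the total time. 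Consequently, if the effective resistances $R_{G_n}(u,u')$ converge for each pair $u,u'\in U$ to limits depending only on $(X,d)$, then the law of $\bigl(B^n(\tau^n_1),\dots,B^n(\tau^n_k)\bigr)$ converges accordingly; letting $\eps\to0$, the resulting limits form a consistent family of traces on finer and finer nets, which reconstructs a unique limiting law on $C_T(X)$, depending only on $(X,d)$ and $o$.

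\textbf{Convergence of effective resistances, and the main obstacle.}
It remains to show that for any two points $x,y$ of $\bigcup_n G_n$ the numbers $R_{G_n}(x,y)$ converge to a \gseq-independent limit. Since we may assume $G_n\subseteq G_{n+1}$, the sequence $(G_n)$ is an exhaustion of the countable network $\Lambda:=\bigcup_n G_n$ with resistances equal to edge-lengths; by Rayleigh's monotonicity law $R_{G_n}(x,y)$ is non-increasing in $n$, hence converges, to the free effective resistance of $\Lambda$ between $x$ and $y$. Up to degree-$2$ subdivision vertices, which by (the special case of) \Lr{effres} do not affect effective resistances, $\Lambda$ is intrinsic to $X$ — it is the union of all edges of $X$ with their $\Hm$-lengths — so $R_\Lambda(x,y)$, and hence $\lim_n R_{G_n}(x,y)$, depends only on $(X,d)$; a \gseq\ that is not nested is handled by comparison with a common refinement. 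I expect this identification to be the hard part: making precise the sense in which the countable networks $\bigcup_n G_n$ coming from different \gseq s coincide requires exploiting the approximation property~\ref{comps}, in the style of a Kron (star--mesh) reduction onto a fixed finite vertex set, and it is here — together with the occupation-time estimates \Lr{ellh2} and \Lr{Ldel} — that the hypothesis $\Hm(X)<\infty$ is genuinely used.
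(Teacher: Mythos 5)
Your overall scheme (tightness from \Lr{exist}, reduction to finite-dimensional distributions, watching the process on a finite net, and using effective resistances to control where it goes next) is in the same spirit as the paper, but there are two genuine gaps, one of which is at the heart of the matter. Granting that the embedded jump probabilities on your net $U$ are determined by the resistances $R_{G_n}(u,u')$ (that much is \Lr{trpr}), the law of $\bigl(B^n(t_1),\dots,B^n(t_k)\bigr)$ also depends on the \emph{clock}: the joint law of the sojourn times between successive visits to $U$ together with the point hit next. This is a semi-Markov kernel, not a continuous-time Markov chain, and it is not determined by effective resistances. Your claim that \Lr{ellh2} and \Lr{Ldel} ``pin down'' the time parametrization is where the proof is missing: \Lr{ellh2} only says that sets of small length are occupied for little time, and \Lr{Ldel} only rules out concentration at a point at a fixed time; neither yields convergence of the holding-time laws, nor of the law of $\bigl(B^n(\tau^n_1),\dots,B^n(\tau^n_k)\bigr)$. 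The paper needs exactly this, and it is why it invokes the pseudo-edge structure theorem (\Tr{corstruct}): since all but $\eps$ of the length sits in pseudo-edges of small total discrepancy, each $G_n$ is, outside a set of small length, metrically close to one fixed finite graph $G^*$, and the timing can then be transferred by explicit couplings (the $B_n\to B^-_n\to B^*_n$ constructions), with the error controlled by the occupation-time bound, the crossing-time estimate \eqref{bet}, and the boundary estimate \eqref{deleps}. Without an argument of this kind, ``resistances converge plus occupation times of small sets are small'' does not imply convergence of the finite-dimensional distributions.

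The second gap is the resistance convergence itself. The paper does not prove it here; it imports \Tr{trprconv} from \cite{gl}. Your Rayleigh-monotonicity sketch gives existence of $\lim_n R_{G_n}(x,y)$ only for nested sequences, and the identification of the limit as an invariant of $(X,d)$ (and its independence of the \gseq, including non-nested ones and varying endpoints $p_n\to p$) is precisely the nontrivial content of that theorem, which you acknowledge but do not supply. Note also that \gseq-independence of the final limit comes cheaply once convergence for a single \gseq\ is known: if $(H_n)$ is another \gseq, then $G_1,H_1,G_2,H_2,\dots$ is again a \gseq, so the interleaved sequence of laws converges and both subsequences share its limit; this is how the paper deduces it, and it would let you drop the ``common refinement'' discussion entirely. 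Finally, two smaller points: your net $U$ must be arranged to lie in $G_n$ for almost all $n$ (interior points of edges are fine by the properties of \gseq s, but endvertices of edges of $S$ need the kind of argument the paper uses for $\Theta$ in \eqref{thetaae}), and the passage $\eps\to0$ at the end (consistency of the net traces reconstructing a unique law on $C_T(X)$) is asserted rather than proved, though this part is routine compared with the timing issue above.
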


This follows immediately from the following lemma. The independence of the limit from $(G_n)$ follows from the fact that if $(H_n)$ is another \gseq\ of $X$, then $G_1,H_1,G_2,H_2,\ldots$ is also a \gseq.

\begin{lemma} \label{luniq}
Let $X$ be a \gl\ space with $\Hm(X)<\infty$ and  \seq{G} a \gseq\ of $X$. Let $o_i\in G_i$ be a sequence of points that converges to a point $o\in X$. Then for every finite collection of open sets $A_1,\ldots, A_z$ of \fcg, and every finite collection of time instants $T_1,\ldots T_k \in \R^+$, the probability $\Pr[B_n(T_i) \in A_i \text{ for every } 1\leq i \leq k]$ converges, where $B_n$ denotes standard \BM\ on $G_n$ from $o_i$.
\end{lemma}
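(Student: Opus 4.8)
The plan is to prove the stronger statement that the laws of the $B_n$ converge weakly in $\cm(C)$, where $C=C_T(X)$ with $T:=\max_i T_i$. This implies the lemma: using \Lr{basis}, and the fact that the proof of \Lr{Ldel} bounds the density of the distribution of $B_n(T_i)$ with respect to $\Hm$ uniformly in $n$, we may at the cost of an error that is uniformly small in $n$ replace each open $A_i$ by a finite union of basic open sets, whose frontier is then a finite set of points and hence $\Hm$-null; the indicator of $\{b\in C:b(T_i)\in A_i\text{ for all }i\}$ is now squeezed between bounded continuous functions on $C$ up to an error of the same kind, so weak convergence yields the claim. By \Lr{exist} the laws of the $B_n$ form a relatively compact family, so it suffices to show that any two of their subsequential weak limits agree; and for this it is enough to produce, for each $\eps>0$, a single measure $\nu_\eps\in\cm(C)$ depending only on $X$, $d$, $o$ and the $T_i$ such that every subsequential weak limit lies within Prokhorov distance $O(\eps)$ of $\nu_\eps$.

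Fix $\eps>0$, and let $\del=\del(\eps)>0$ be small (chosen below). Using \Lr{arc}, and the fact that $\bigcup_n G_n$ has full $\Hm$-measure in $X$ (here we use that the $G_n$ may be taken connected, cf.\ \cite{gl}), choose a finite edge-set $F=F_\eps$ of $X$ such that every component of $X\sm F$ has diameter less than $\eps$ and $\Hm(X\sm F)<\del$. By \ref{comps}, for all large $n$ the components $D^n_1,D^n_2,\dots$ of $G_n\sm F$ are in bijection with the components $C_1,C_2,\dots$ of $X\sm F$, with $D^n_j\subseteq C_j$, and contracting each $D^n_j$ to a point turns $G_n$ into one and the same finite metric graph $\Gamma_F$, whose edge-set is $F$. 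Let $\pi_n:G_n\to\Gamma_F$ be this contraction and fix an embedding $\iota:\Gamma_F\to X$ sending each edge to the corresponding arc of $X$ and each vertex into the corresponding $C_j$. Since the diameter of each $C_j$ is less than $\eps$ we have $d_X(x,\iota(\pi_n(x)))<\eps$ for every $x\in G_n$, and hence $d_C(B_n,\iota\circ\bar B_n)<\eps$ pathwise, where $\bar B_n:=\pi_n\circ B_n$ is the coarse-grained process on the \emph{fixed} graph $\Gamma_F$. It thus suffices to show that the law of $\bar B_n$ converges weakly to a limit depending only on $X$, $d$, $o$, $T$ and $F$.

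To this end, apply \Lr{ellh2} to (a subgraph of $G_n$ containing) $G_n\sm F$, whose length is at most $\Hm(X\sm F)<\del$, while $\Hm(G_n)>L:=\Hm(X)/2$ for large $n$: choosing $\del$ below the threshold $\ell_*=\ell_*(L,T,\eps)$ supplied by \Lr{ellh2}, we obtain that with probability at least $1-\eps$ the process $B_n$ spends total time less than $\eps$ outside $F$ during $[0,T]$. So, up to an event of probability below $\eps$, the process $\bar B_n$ spends all but $\eps$ of its time on the edges of $\Gamma_F$, where it runs as standard \BM\ on the corresponding interval, and it passes through the vertices essentially instantaneously; and by \Lr{trpr} the probabilities with which $\bar B_n$ leaves a vertex $v_j$ along its incident edges — equivalently, the exit distribution of $B_n$ from $D^n_j$ over the finite set $\partial_j$ of endvertices of $F$-edges meeting $C_j$ — are determined by the effective resistances $R_{D^n_j}(x,y)$ for $x,y\in\partial_j$. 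These resistances converge as $n\to\infty$ to limits depending only on $X$ and $F$: one may treat $(D^n_j)_n$ as a \gseq\ of $\cls{C_j}$ and invoke Rayleigh's monotonicity principle along a nested \gseq\ together with the fact that every edge of $X$ eventually lies in $G_n$, exactly as in the remark preceding this lemma. Feeding the convergence of this finite family of resistances, and the vanishing of the holding times at the vertices, into a continuity argument for diffusions on the fixed finite graph $\Gamma_F$ gives the required convergence of the law of $\bar B_n$; taking $\nu_\eps$ to be the $\iota$-image of the limit completes the plan.

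I expect the last step to be the main obstacle: upgrading the convergence of the finitely many effective resistances $R_{D^n_j}(x,y)$ — together with the fact that the time spent near the vertices of $\Gamma_F$ tends to $0$ — to weak convergence of the coarse-grained processes $\bar B_n$ on $\Gamma_F$. One must first identify the limiting object, namely \BM\ on the edges of $\Gamma_F$ with prescribed, resistance-determined splitting probabilities at the vertices and no stickiness, and then show that the assignment of the law of this diffusion to the edge-lengths and vertex splitting data is continuous. Two further technical points that need care are the claim that $\bigcup_n G_n$ has full $\Hm$-measure in $X$, and the verification that the sequences $(D^n_j)_n$ satisfy the hypotheses under which Rayleigh monotonicity and the interleaving argument legitimately pin down the limiting resistances independently of the chosen \gseq.
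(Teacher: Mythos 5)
Your overall strategy (coarse-grain onto a fixed finite graph determined by a finite edge-set $F$ capturing almost all of the length, control the time spent off $F$ by \Lr{ellh2}, and use convergence of effective resistances via \Lr{trpr} to pin down the limiting behaviour) is the same idea the paper pursues, but there are two genuine gaps. First, your construction starts from a finite edge-set $F$ of $X$ with $\Hm(X\sm F)<\del$, and no such $F$ exists in general: \Lr{arc} only makes the components of $X\sm F$ small in \emph{diameter}, not small in \emph{measure}, and in a general \gl\ continuum the edges of $X$ need not account for almost all of $\Hm(X)$ (the arcs forming the $G_n$ need not be open in $X$, so they are not edges of $X$; likewise your auxiliary claim that $\bigcup_n G_n$ has full $\Hm$-measure is not available). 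This is precisely why the paper proves and invokes the \pe\ structure theorem (\Tr{corstruct}), replacing edges by \pe s with small total discrepancy, and why it needs a first coupling (replacing each $G_n\cap\cls f$ by a single edge of length $R_{G^f_n}(f^0,f^1)$, justified by \Lr{effres} and \Lr{trpr}) before any coarse-graining can happen. As written, your argument covers only the ``simplified version'' the paper explicitly singles out, namely the case of a \des\ $E$ with $\sum_{e\in E}\ell(e)=\Hm(X)$.

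Second, the step you yourself flag as the main obstacle is not a technicality but the core of the proof, and it is missing. Knowing that the finitely many resistances $R_{D^n_j}(x,y)$ converge and that the occupation time of $X\sm F$ is small does not by itself yield weak convergence of the coarse-grained processes on $\Gamma_F$: one has to identify the limit and control the time-changes introduced by the excursions into the contracted regions. The paper does this by an explicit chain of couplings: it builds a jump process $B^*_n$ on a \emph{fixed} graph $G^*$ whose edge-lengths are the limiting resistances $R_f$ (not the $n$-dependent ones), shows $B^*_n\approx B^-_n$ by the occupation-time bound of \Lr{ellh2} together with the crossing estimates \eqref{deleps} and \eqref{bet} that rule out the particle sitting in or rapidly crossing $(A)_\delta$ at time $T$, and then couples $B^*_n$ with $B^*_m$ excursion by excursion, using the buffer sets $\Theta',\Theta''$, the convergence of the exit distributions $P^q_n$ (from \Lr{trpr} and \Tr{trprconv}), and a uniform bound $M$ on the number of commutes between $\Theta''$ and $\Theta'$ up to time $T$. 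Your proposal would need all of this machinery (or a substitute, e.g.\ a genuine continuity theorem for Walsh-type diffusions on a fixed graph with respect to edge-lengths and vertex splitting data, plus an argument that the vertex holding times vanish uniformly), and in addition a verification that $(D^n_j)_n$ is a \gseq\ of $\cls{C_j}$ so that \Tr{trprconv} applies; none of these are supplied. A smaller but real issue: your comparison path $\iota\circ\bar B_n$ is in general not continuous (the embedding $\iota$ jumps at the vertices of $\Gamma_F$), so it is not an element of $C$ and the Prokhorov-distance argument needs reformulating, e.g.\ at the level of finite-dimensional events as the paper does.
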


The rest of this section is devoted to the proof of \Lr{luniq}. As it is rather involved, we would like to offer the reader the option of reading a simpler proof of a weaker result that still contains many of the ideas: the case where $X$ contains a \des\ $E$ with $\sum_{e\in E} \ell(e) =  \Hm(X)$. 

\readi{The reader choosing this option will be guided throughout the proof as to which parts can be skiped.}

\subsection{Useful facts about \gl\ spaces}

We will be using the following terminology and facts from \cite{gl}.

\begin{theorem}[{\cite{gl}}] \label{trprconv}
Let $X$ be a \gl\ space with \finhm\ and $\seq{G}$ be a \gseq\ of $X$. 
Then \fe\ two sequences \seq{p}, \seq{q} with $p_n,q_n \in G_n$, each converging to a point in $X$, 
the effective resistance $R_{G_n}(p_n,q_n)$ converges. If $p_n=p,q_n=q$ are constant sequences, then this convergence is from above, i.e.\ $\lim_n R_{G_n}(p,q)\leq R_{G_i}(p,q)$ \fe\ $i$.
\end{theorem}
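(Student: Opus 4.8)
The plan is to use the two variational descriptions of effective resistance --- Thomson's principle $R_G(p,q)=\inf\{E(i):i\text{ is a unit flow from }p\text{ to }q\}$, which is the definition used above, and the dual potential (Dirichlet) principle for $R_G(p,q)^{-1}$ --- together with Rayleigh's monotonicity principle and the network reduction of \Lr{effres}. The picture to keep in mind is that, modulo a suitably large finite edge-set $F$ of $X$, all members $G_n$ of the \gseq\ eventually look the same, while the part of $X$ that $F$ fails to capture contributes a negligible amount to effective resistances because $\Hm(X)<\infty$. Since interleaving two \gseq s of $X$ produces a third, it suffices to prove that $(R_{G_n}(p_n,q_n))_n$ is a Cauchy sequence; independence of the limit from the \gseq\ is then automatic.

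Fix $\eps>0$. Using \Lr{arc} and a refinement argument based on $\Hm(X)<\infty$ (the components of $X\sm F$ have total \HM\ at most $\Hm(X)$, so all but finitely many already have $\Hm$ below $\eps$, and the finitely many large ones can be cut up further), I would choose a finite edge-set $F$ of $X$ \st\ every component $C$ of $X\sm F$ has $\Hm(C)<\eps$. Then for every $n$ and every such $C$ the subgraph $G_n\cap\cls{C}$ has total length $<\eps$, so every effective resistance inside it is $<\eps$ (effective resistance is at most graph distance, which is at most total length). Next, I would use property~\ref{comps} of a \gseq\ to establish that the position of $G_n$ relative to $F$ stabilizes: for all large $n$ there is a bijection between the components of $X\sm F$ and those of $G_n\sm F$ (each component of $G_n\sm F$ lies in, and thus determines, a unique component of $X\sm F$, and by~\ref{comps} this map is onto), all attachment vertices lying in the fixed finite set $\partial(\bigcup F)$; and the set of edges of $F$ actually present in $G_n$ --- present as whole edges of $X$, by the length axiom of a \gseq\ --- equals one fixed set $F_\infty$ for all large $n$.

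Now, using \Lr{effres} repeatedly, I would perform inside each $G_n$ (for $n$ large) a Kron-type reduction onto the fixed finite vertex set $W:=\partial(\bigcup F)\cup\{p_n,q_n\}$: each piece $G_n\cap\cls{C}$ is replaced, without affecting resistances among the remaining vertices, by a gadget on its at most $2|F|$ attachment points all of whose edges have resistance $O(\eps)$. Replacing each such gadget by a fixed representative then changes effective resistances among the vertices of $W$ by only $O(\eps)$, by continuity of $R$ in the edge resistances --- provided one controls how many $O(\eps)$-gadgets can sit in parallel, and this is where $\Hm(X)<\infty$ enters, bounding the total conductance carried by the part of $X$ that $F$ neglects. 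The conclusion is that $R_{G_n}(\hat p_n,\hat q_n)$ lies within $O(\eps)$ of a fixed number $R_{\cn}(\hat p,\hat q)$ for all large $n$, where $\cn$ is a fixed finite network built from $F_\infty$ plus one representative gadget per component of $X\sm F$, and $\hat p_n,\hat q_n$ are the images of $p_n,q_n$ in $W$. Finally, since effective resistance satisfies the triangle inequality on the points of a network and $R_{G_n}(p_n,\hat p_n)\le\eps$ (as $p_n$ and $\hat p_n$ lie in a common piece $G_n\cap\cls{C}$, or coincide), we get $|R_{G_n}(p_n,q_n)-R_{\cn}(\hat p,\hat q)|=O(\eps)$ for all large $n$; letting $\eps\to0$ shows $(R_{G_n}(p_n,q_n))_n$ is Cauchy.

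For the last assertion, fix $i$ and $\eps>0$ and let $i^\ast$ be the electrical current in $G_i$, a unit flow from $p$ to $q$ of energy exactly $R_{G_i}(p,q)$. For $n$ large, each edge $e$ of $G_i$ is an edge of $X$ whose endpoints lie in components of $X\sm F$ that, inside $G_n$, are joined by paths of total length $O(\eps)$ (by~\ref{comps} and the smallness of those components); rerouting $i^\ast$ along these $G_n$-paths yields a unit flow from $p$ to $q$ in $G_n$ of energy at most $R_{G_i}(p,q)+O(\eps)$, so $R_{G_n}(p,q)\le R_{G_i}(p,q)+O(\eps)$ by Thomson's principle; with the convergence above and $\eps\to0$ this gives $\lim_n R_{G_n}(p,q)\le R_{G_i}(p,q)$. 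The step I expect to be the real obstacle is the stabilization claim of the second paragraph together with the parallel-accumulation estimate of the third: condition~\ref{comps} is combinatorially weak, the components of $X\sm F$ need not be nested as $n$ grows, and infinitely many short edges may accumulate, so it takes genuine work --- and an essential appeal to $\Hm(X)<\infty$ --- to certify that what $F$ ignores cannot perturb effective resistances by more than $O(\eps)$.
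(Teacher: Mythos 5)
First, note that the paper contains no proof of \Tr{trprconv} at all: the statement is imported verbatim from \cite{gl}, so there is no internal argument to compare yours against; the closest the paper comes is the pseudo-edge structure theorem \Tr{corstruct}, also quoted from \cite{gl}. Judged on its own, your sketch has a sensible architecture (work modulo a finite edge-set $F$, Kron-reduce the neglected parts via \Lr{effres}, use property~\ref{comps} of a \gseq\ to stabilise the combinatorics, and get the monotonicity statement from Thomson/Rayleigh), but it leaves open --- and you admit this --- precisely the step that carries the content of the theorem: that what $F$ ignores perturbs $R_{G_n}(p_n,q_n)$ by only $O(\eps)$, uniformly in $n$. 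Your decomposition yields one gadget per component of $X\sm F$, each with internal resistances below $\eps$, but the number $N(\eps)$ of components blows up as $\eps\to0$, and nothing you say prevents the replacement errors from accumulating in series to order $N(\eps)\cdot\eps$: think of $N$ two-terminal gadgets in series between $p$ and $q$, each contributing anywhere between $0$ and $\eps$ depending on $n$. Finiteness of $\Hm(X)$ does not by itself convert per-gadget smallness into a global bound; the known way to obtain the needed summable control is exactly the pseudo-edge decomposition of \Tr{corstruct}, where the neglected part splits into finitely many two-terminal pseudo-edges with total discrepancy $\sum_f\delta(f)<\eps$ (so each piece is resistively an interval up to an error that is summable over the decomposition) plus a leftover of total measure less than $\eps$. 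Without that quantitative input, your third paragraph restates the theorem rather than proving it.

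Two smaller points. Your choice of $F$ with every component of $X\sm F$ of measure less than $\eps$ is correct, but not for the reason you give: ``cut up the large ones further'' is circular, since cutting only controls diameter. The clean route is \Lr{arc} plus compactness: as $\Hm$ is a finite Borel measure, every point has a closed ball of measure $<\eps$, finitely many half-radius balls cover $X$, and then any component of sufficiently small diameter lies in one of the chosen balls and hence has measure $<\eps$. Also, your connectivity claims for the gadgets (that $G_n\cap\cls{C}$ is connected and contains the attachment points, which you need for ``resistance at most total length'') require a further appeal to property~\ref{comps}. Finally, for the assertion $\lim_n R_{G_n}(p,q)\le R_{G_i}(p,q)$, rerouting the unit current of $G_i$ through $G_n$ is problematic because an edge of $G_i$ need not lie in $G_n$ and the detour replacing it need not be short; a cleaner argument is to observe that $G_1\cup G_i,G_2\cup G_i,\dots$ is again a \gseq, interleave it with \seq{G}, and use Rayleigh monotonicity $R_{G_n\cup G_i}(p,q)\le R_{G_i}(p,q)$ together with the convergence statement already established.
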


\readi{The reader that chose to read the simplified version can now skip to \Sr{proofstart}.}

\medskip

A \defi{\pe} of a metric space $X$ is an open connected subspace $f$ \st\ $|\partial f|=2$ and no homeomorphic copy of the interval $(0,1)$ contained in $\cls{f}$ contains a point in $\partial f$. We denote the elements of $\partial f$ by $f^0,f^1$, and call them the \defi{endpoints} of $f$. Note that every edge is a \pe. See \cite{gl} for further examples.

We define the \defi{discrepancy} $\delta(f)$ of a \pe\ $f$ by  $\delta(f):=\Hm(f) - d(f^0,f^1)$, which  is always non-negative \cite{gl}.

\begin{theorem}[\cite{gl}] \label{corstruct}
\Fe\ \gl\ continuum $X$ with \finhm, and every $\eps>0$, there is a finite set $\cf$ of pairwise disjoint \pe s of $X$ with the following properties
\begin{enumerate}
\item \label{cfi} $\sum_{f\in \cf} \Hm(f) > \Hm(X)  - \eps$;
\item \label{cfii} $\sum_{f\in \cf} \delta(f)< \eps$;
\item \label{cfiii} $X \sm \cf$ has finitely many components, each of which is clopen in $X \sm \cf$ and contains a point in $\overline{\cf}$;
\item \label{cfiv} \fe\ $f\in \cf$, and every \gseq\ \seq{G},  $G_n \cap \cls{f}$ is connected and contains a \pth{f^0}{f^1} for almost every $n$;
\item \label{cfv} $\cls{\bigcup \cf}$ avoids any prescribed point of $X$;
\item \label{cfvi} $\cf$ contains any prescribed finite edge-set.
\end{enumerate}
\end{theorem}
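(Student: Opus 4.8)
The plan is to produce $\cf$ by cutting $X$ at finitely many carefully placed points lying inside edges, keeping those resulting pieces that are essentially straight thin arcs and discarding a remainder of small measure. Fix $\eps>0$ and set the \emph{relative discrepancy} target $\rho := \eps/(2\Hm(X))$. The point of working with relative discrepancy is that if every kept \pe\ $f$ satisfies $\delta(f)\le\rho\,\Hm(f)$, then disjointness gives $\sum_{f\in\cf}\delta(f)\le\rho\sum_f\Hm(f)\le\rho\,\Hm(X)<\eps$, which is exactly \ref{cfii}. Thus the whole difficulty of \ref{cfii} is converted into the \emph{local} task of decomposing most of $X$ into pieces that are nearly geodesic between their endpoints (for an exactly geodesic subarc one even has $\delta=0$).

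First I would dispose of the prescribed data. For \ref{cfvi}, subdivide each edge of the prescribed finite edge-set into short subarcs; these are honest edges, hence \pe s of discrepancy $0$, and I put them into $\cf$, reserving their endpoints as cut points. For \ref{cfv}, use \Lr{basis} to choose a connected open neighbourhood $N$ of the prescribed point whose frontier is a finite set of points, each lying in an edge, and with $\Hm(N)$ as small as desired; $N$ will be assigned to the complement, so that $\cls{\bigcup\cf}$ avoids the prescribed point.

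Next comes the coarse cut. Applying \Lr{arc} at a fine scale $\eta$, and subdividing the resulting edges as in \Prr{mineps} so each is short with the chosen cut points at their midpoints, yields a finite edge-set $S$ such that every component of $X\sm S$ has diameter $<\eta$, with $S$ refining the cut points above and the frontier of $N$. I would then keep as a \pe\ the closure of each component of $X\sm S$ that attaches to the rest of $X$ at exactly two of these cut points and is ``linear'' (contains no branch point carrying appreciable measure), and throw the remaining components --- the branchy ones, the low-measure leftover branches and circles, and $N$ --- into a junk set $J$. Because each cut point lies interior to an edge, on the kept side it is a topological endpoint; hence each kept piece genuinely has a two-point frontier and admits no arc through an endpoint, so it is a \pe.

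The crux is the bookkeeping, which is also the main obstacle: showing simultaneously that $\Hm(J)<\eps$, giving \ref{cfi}, and that each kept $f$ obeys $\delta(f)\le\rho\,\Hm(f)$, giving \ref{cfii}. Both rest on the quantitative one-dimensionality forced by $\Hm(X)<\infty$: at a fine enough scale the set of branch points is finite and its neighbourhoods, together with the totally-disconnected remainder and the small leftover branches, carry arbitrarily little measure (this is where the finiteness machinery of \cite{gl} is needed), while away from branch points each short subarc of an edge is geodesic, so its discrepancy vanishes and the kept pieces are nearly straight. I expect the measure-smallness of $J$ --- in particular controlling the totally-disconnected part and the infinitely many small edges --- to be the genuinely hard step. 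The remaining properties are then structural: \ref{cfiii} holds because $S$ is finite, so $X\sm\bigcup\cf$ consists of finitely many clopen chunks (the infinitely many discarded tiny edges get absorbed into a single low-measure chunk through the shared cut structure), each meeting $\overline{\cf}$ since every chunk abuts a cut point; and \ref{cfiv} follows from the component-tracking property \ref{comps} of \gseq s, applied with $F$ the finite edge-set isolating $\cls f$: the unique tracking component of $G_n\sm F$ shows $G_n\cap\cls f$ is connected and, by density of $\bigcup G_n$ and since $f^0,f^1$ lie in edges, reaches both endpoints, yielding a \pth{f^0}{f^1} for almost all $n$.
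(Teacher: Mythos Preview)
This theorem is not proved in the paper at all: it is imported wholesale from \cite{gl}, as the bracketed citation in the theorem header indicates. The paper uses it as a black box in the proof of \Lr{luniq}, and there is no proof in the present paper to compare your proposal against.

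That said, a few remarks on your sketch as a standalone attempt. Your strategy of keeping only the ``linear'' two-ended components of $X\sm S$ and discarding everything branchy is too restrictive to yield \ref{cfi}. A \pe\ in the sense used here need not be an arc or even close to one: the definition only requires a two-point frontier and that no arc in the closure passes through a frontier point, so a \pe\ can carry arbitrarily complicated internal branching (think of a small Hawaiian-earring-like blob attached to the rest of $X$ at two points). In spaces like the modified Sierpinski gasket of \fig{examples}, most of the measure sits in regions that are highly branched at every scale, and your junk set $J$ would swallow nearly all of $\Hm(X)$ no matter how fine you take $\eta$. The actual construction in \cite{gl} must allow \pe s that are far from arcs, controlling $\delta(f)=\Hm(f)-d(f^0,f^1)$ not by making $f$ nearly geodesic but by a more delicate accounting. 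You yourself flag the bookkeeping for $\Hm(J)$ as the hard step and defer to \cite{gl}; that is exactly where your outline breaks, and it is not a detail that can be filled in along the lines you suggest.
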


\comment{
\subsubsection{sketch}
Before giving the formal proof of \Tr{thuniq}, let us sketch the main ideas. When $n<m$ are large, $G_n$ differs from $G_m$ only on a subgraph of very small length compared to $\ell(X)$. This has two useful implications: \\
\begin{enumerate}
\item \label{i} effective resistances between fixed finite sets of points converge with $n$, from which we can deduce that certain transition probabilities converge. 
\item \label{ii} by the results of \Sr{SecOccu}, the occupation time of  $G_m\sm G_n$ by \BM\ is short. 
\end{enumerate}
This suggests the following idea: we construct a process $B^*_n$ on $G_n$ and a process $B^*_m$ on $G_m$ obtained from \BM\ by ignoring its excursions into $X\sm G_m\cap G_n$. We check that $B^*_n$ is very similar to $B^*_m$ for large $n,m$, and use \eqref{ii} to show that they provide a good approximation for the probabilities in question $\Pr[B_n(T_i) \in A_i $.

} 

\subsubsection{Beginning of proof}

\begin{proof}[Proof of \Lr{luniq}] \label{proofstart}

For simplicity we will assume that $k=1$, letting $A_1=:A, T_1=:T$; the same arguments can be used to prove the general case. 
Given an arbitrarily small positive real number \eps, we will find an integer large enough that whenever $n,m$ exceed that integer we have
\labtequ{aim}{ $|\Pr[B_n(T) \in A] - \Pr[B_m(T) \in A]|< \eps$.}
This immediately implies the assertion. So let us fix  $T,A$ and \eps.


Note that it suffices to prove the assertion when $A$ is a \bos\ of $X$. By \Lr{basis} we can assume that the frontier $\partial A$ of $A$ consists of finitely many points, which are  inner points of edges. Thus we can choose $\delta \in \R^+$ small enough that $(A)_\delta:= \bigcup \{Ball_\delta(a) \mid a\in \partial A\}$, where $Ball_\delta(a)$ is the ball of radius \del\ around $a$ in \ltp, is a disjoint union of edges. 

Moreover, by \Cr{Ldel} we can make this $\delta$ \seth\ 
\labtequ{deleps}{\fe\ $n$, we have $\Pr[B_n(T) \in ((A)_\delta)]< \eps/15$.} 
%

%
%

Next, we choose a parameter $\beta$, depending on \del, small enough that it is relatively unlikely that standard \BM\ will traverse one of the intervals in $\partial A_\delta$ in a time interval of length $\beta$. More precisely, denoting by $W(t)$ the standard \BM\ on \R\ starting at the origin, we choose $\beta$ so that
\labtequ{bet}{$\Pr[\max_{t\in [0, \bet]} |B(t)| >\del] < \eps / 15$,}

\subsubsection{Applying the \pe\ structure theorem}

Fix a \gseq\ \seq{G} of $X$ for the rest of this proof.

\readi{The reader that chose to read the simplified version can now skip to \Sr{second}, letting $\cf$ be a finite subset of $E$ with $\sum_{f\in \cf} \Hm(f) > \Hm(X)  - \eps$, assuming $(A)_\delta\subseteq \cf$, letting $\ck$ be the set of components of $X \sm \cf$ ---which is finite \cite[Lemma 2.9]{gl}--- and letting $\Theta:=  \bigcup_{f\in \cf} \partial f =  \bigcup_{K\in \ck} \partial K$. Moreover, almost every $G_n$ contains $\cf$ \cite[Proposition 3.4.]{gl}, hence it also contains $\Theta$ (\ref{thetaae}).  We may assume that $o_n\in \bigcup \ck$  for almost every $n$ \eqref{oinK}, for if $o$ happens to lie in an edge $e$ in $\cf$ we can remove from $\cf$ a sufficiently small subedge of $e$ cointaining $o$, making sure that $o\in \bigcup \ck$ and all the above is still satisfied. By \Tr{trprconv} we have \leth\ $\lim_n |R(p,q) - R_{G_n}(p,q)| =0$ \fe\ $p,q\in \Theta$ (\ref{rth}).
}

Applying \Tr{corstruct} yields a finite set $\cf$ of pairwise disjoint \pe s of $X$ with $\sum_{f\in \cf} \Hm(f) \approx \Hm(X) $ and $\sum_{f\in \cf} \delta(f) \approx 0$. Moreover, $X \sm \cf$ has finitely many components, each of which is clopen in $X \sm \cf$ and contains a point in $\overline{\cf}$. Let $\ck$ be the set of these components. We can also assume by \Tr{corstruct} that \fe\ $f\in \cf$, the graph $G^f_n:=G_n \cap \cls{f}$ is connected and contains a \pth{f^0}{f^1} for almost every $n$. Moreover, we can assume by \ref{cfvi} that  $(A)_\delta\subseteq \cf$. Applying \ref{cfv} to $o$ we can assume that $\cf$ avoids an open neighbourhood of $o$, and hence
\labtequ{oinK}{$o_n\in \bigcup \ck$  for almost every $n$.}

Note that \fe\ component $K\in \ck$, we have $\partial K \subseteq \bigcup_{f\in \cf} \partial f$ because $K$ is clopen in $X \sm \cf$ and $f$ is open in $X$. Thus we can write $\Theta:=  \bigcup_{f\in \cf} \partial f =  \bigcup_{K\in \ck} \partial K$. Since we know that the subgraph $G^f_n$ of $G_n$ contains a \pth{f^0}{f^1} for almost every $n$, it follows that 
\labtequ{thetaae}{$G_n$ contains $\Theta$ for almost every $n$.}

It follows easily from the definitions that \fe\ $f\in \cf$, the sequence of graphs  \seq{G^f}\ is a \gseq\ of $\cls{f}$. Thus we can apply \Tr{trprconv} to this sequence to deduce that their effective resistances $R_{G^f_n}(f^0,f^1)$ converge to a value that we will denote by $R_f$.

By \Tr{trprconv} again, the effective resistances $R_{G_n}(p,q)$ between any two points $p,q$ in the boundary $\partial K$ of some component $K\in \ck$ converge with $n$ from above to a value that we will denote by $R(p,q)$ (where we used \eqref{thetaae}). Thus we have

\begin{enumerate}
\item \label{rf} $\lim_n |R_f - R_{G^f_n}(f^0,f^1)| = 0$ \fe\ $f\in \cf$, and
\item \label{rth} $\lim_n |R(p,q) - R_{G_n}(p,q)| = 0$ \fe\ $p,q\in \Theta$.
\end{enumerate}

\subsubsection{The first coupling}

The first step in our proof will be to couple our \BM\ $B_n$ on $G_n$ with standard \BM\ $B^-_n$ on a simplified version $G^-_n$ of $G_n$, which can be thought of as being obtained from $G_n$ by turning the \pe s in $\cf$ into edges. 

\readi{This step can be omitted if $\cf$ are edges to begin with, and the reader who chose to read the simplified version of this proof can skip the rest of this subsection.}

\medskip
Let $G^-_n$ denote the graph obtained from $G_n$ by replacing, \fe\ $f\in \cf$, the subgraph $G^f_n= G_n \cap \cls{f}$ with an edge $e_f$ with endvertices $f^0,f^1$ and length $\ell(e_f)= R_{G^f_n}(f^0,f^1)$. Recall that by \eqref{oinK}, $o_n \not\in \bigcup \cf$. 
If $f$ happens to be an edge to begin with, then it remains an edge of $G^-_n$; in particular, $(A)_\delta$ is still contained in the set of edges of $G^-_n$.
Let $B^-_n$ denote standard \BM\ from $o_n$ on $G^-_n$.

In order to couple $B_n$ with $B^-_n$, we are going to modify $G_n$ into $G^-_n$ in a more elaborate way than described above, using more local changes. 

For this, choose some $f\in \cf$, and recall that, by the definition of a \pe, and by \ref{cfiv}, $G^f_n$ is connected, it contains a \arc{f^0}{f^1}\ $P$, and both $f^0,f^1$ have degree 1 in $G^f_n$. We can choose $P$ to be the shortest such arc; this is easy to do since  $G^f_n$ is a finite graph and so there are only finitely many candidates. 

We claim that \ti\ a finite edge-set $\cp$ (in the topological sense of \Sr{sgl}) contained in $P$, \st\ letting
$\cc$ denote the set of components of $G^f_n - \cp$, and letting
$\Pi$ denote the finite set $\partial \cp \sm \{f^0,f^1\}$ separating $\cp$ from $\cc$, we have (see top half of \fig{pe})

\begin{enumerate}
\item \label{cci} No $C\in \cc$ contains $f^0$ or $f^1$; 
\item \label{ccii} Each $C\in \cc$ contains at most 2 elements of $\Pi$, and
\item \label{cciv} $\sum_{C\in \cc} \ell(C) \leq 2 \ell(G^f_n\sm P) \approx 0$.
\end{enumerate}

To show this, \fe\ component $K$ of $G^f_n\sm P$ we let $P(K)$ denote the minimum subpath of $P$ separating $K$ from $G^f_n\sm K$; thus $K$ sends at least one edge to each endvertex of $P(K)$ by its minimality. Note that $P(K)$ is trivial, i.e.\ just a vertex, if that vertex alone separates $K$.
Let $B$ denote the union of the $B(K)$ over all such components $K$. Note that $B$ is a disjoint union of subpahts of $P$, some of which might be the union of several intersecting $B(K)$. Let $\cp$ be its complement $P\sm B$, and let $\Pi=\partial \cp \sm \{f^0,f^1\}$ be the set of endvertices of these paths.

It is clear that this choice satisfies \ref{cci}, since none of the components $K$ above send an edge to $f^0$ or $f^1$ because, since $f$ is a \pe\ and $G^f_n$ is contained in it, each of these vertices has only one incident edge, and that edge must be in $P$.

To see that  \ref{ccii} is satisfied, suppose $C$ contains 3 vertices $x,y,z\in \Pi$ lying in that order on $P$,  let $e$ be an edge in $\cp$ incident with $y$, and let $R$ be an \arc{x}{z}\ in $C$. Let $x'$ be the last point on $R$ in the component of $P\sm e$ containing $x$, and $z'$ the first   point on $R$ in the component of $P\sm e$ containing $z$. Then the subarc of $R$ from $x'$ to $z'$ avoids $P$ and hence shows that $e$ is contained in $B$. This contradicts our choice of $\cp$, and proves  \ref{ccii}.


Finally, \ref{cciv} is tantamount to saying that the subgraph $P\sm \cp$ of $P$ contained in $\bigcup \cc$ has length at most $\ell(G^f_n\sm P)$. This follows from our choice of $P$ as a shortest \arc{f^0}{f^1}: for if we contract each component $K$ of $G^f_n\sm P$ together with $P(K)$ (as defined above), then we are left with a path of length $\ell(\cp)$ at the end, and for each contracted subarc $R$ of $P$ we have contracted a subgraph of $G^f_n\sm P$ of length at least $\ell(R)$.
\medskip
\epsfxsize=\hsize

\showFig{pe}{Replacing the components $\cc$ of $G^f_n - \cp$ with equivalent edges.}
Now replace each component $C\in \cc$ containing two elements $v,w$ of $\Pi$ with a $v$-$w$~edge of length $R_{C}(v,w)$ (\fig{pe}). Then contract any $C\in \cc$ that contains only one element of $\Pi$ into that point. Note that this modifies $G^f_n$ into a \arc{f^0}{f^1} $P'$. 

Note that $R_{P'}(f^0,f^1) = R_{G^f_n}(f^0,f^1)$ by \Lr{effres}. Since we chose $\ell(e_f)= R_{G^f_n}(f^0,f^1)$ in the above definition of $G^-_n$, it follows that if we perform these modifications on each $f\in \cf$ then the resulting graph will be isometric to $G^-_n$.

\medskip
In order to couple $B_n$ with \BM\ $B^-_n$ on $G^-_n$, we pick a set of points $\Pi'$ on $P$ as follows. By definition, every $p\in \Pi$ is incident with exactly one element $R_p$ of $\cp$, which is a subpath of $P$. We choose a point  $p'$ on $R_p$ that is very close to $p$; more precisely, we choose these points $p'$ in such a way that, letting $r_p$ denote the subarc of $R_p$ between $p$ and $p'$, we have
\labtequ{pp}{$\sum_{p\in \Pi} \ell(r_p) <\ell(G^f_n\sm P)$.}
Since we can choose the $p'$ as close as we wich to $p$, there is no difficulty in satisfying this.

In order to perform the desired coupling, we separate the sample path of $B_n$ into excursions by stopping at first visit to $\Pi$, then at the first visit to $\Pi \cup \Pi'$ thereafter (there are always 2 candidate points at which we can stop, one in $\Pi$ and one in $\Pi'$), then at the next visit to $\Pi$, and so on. To couple with \BM\ on $G^-_n$, replace each such excursion $R$ starting at a point $p$ in $\Pi$ by an excursion on $G^-_n$ with same starting point $p$ and stopping upon its first visit to $\Pi \cup \Pi'$ (again, there are 2 candidate points at which we can stop), conditioned on stopping at the same point where $R$ stopped.

Since transition probabilities are the same by \Lr{trpr}, the resulting process is equidistributed with \BM\ $B^-_n$ on $G^-_n$. The two graphs differ in that $\bigcup \cc$ is replaced by edges. The coupling is such that the two processes only differ as to the time they spend in $\bigcup \cc \cup \bigcup_{p\in \Pi} r_p$ or the part of $G^-_n$ that replaces it respectively. We will use \Lr{ellh2} to bound this time. 

We claim that $B_n$ behaves similarly to $B^-_n$ \wrt\ our open set $A$; more precisely, we claim that 
\labtequ{bothA}{$\Pr[\{B^-_n(T) \in A \text{ and }  B_n(T) \not\in A\}\text{ or } \{B^-_n(T) \not\in A \text{ and }  B_n(T) \in A\}] < \eps/5$.}
To prove this, suppose that the event appearing in \eqref{bothA} occured. Recall that the two graphs $G_n,G^-_n$ differ in that $\bigcup \cc$ is replaced by a set of edges $E_\cc$. Let $\Delta_1:= OT_{2T}(\bigcup \cc)$ and $\Delta_2:= OT^-_{2T}(E_\cc)$ denote the occupation time of this difference $\bigcup \cc$ or $E_\cc$ by $B_n$ and $B^-_n$ respectively up to time $2T$ (the reason for the factor 2 will become apparent below).
We claim that in this case, at least one of the following (unlikely) events occured as well:
\begin{enumerate}
 \item \label{Ai} $\Delta_1 > \beta$ or $\Delta_2 > \beta$ (large occupation time of a small set);
 \item \label{Aii} $B_n(T)$ or $B^-_n(T)$ is in $(A)_\delta$ (particle in a small set at time $T$);
 \item \label{Aiii} $B_n([T,T+ \beta])$ or $B^-_n([T ,T+ \beta])$ crosses an edge in $(A)_\delta$ (fast crossing of an edge).
\end{enumerate}
To see this, let $\tau_i$ denote the time $t$ that $B_n(t)$ has just crossed $(A)_\delta$ for the $i$th time; thus $B_n(\tau_i)$ is an endpoint of $(A)_\delta$, and $B_n[t,\tau_i]$ is contained in some edge in $(A)_\delta$ for sufficiently large $t$. Define $\tau^-_i$ similarly for $B^-_n(t)$. Note that if $o_n\in A$, then $B_n(\tau_{2i+1})\in A^c$ and $B_n(\tau_{2i})\in A$ \fe\ $i\in \N^*$ since $ (A)_\delta$ separates $A$ from its complement $A^c$, and so in order to `change sides' from $A$ to $A^c$ the particle has to cross $(A)_\delta$.

Let $k$ denote the largest integer \st\ $\tau_k<T$, and $m$ the largest integer \st\ $\tau^-_k<T$; since $ (A)_\delta$ is a finite edge-set, these numbers are well-defined since $B_n[0,T]\in C$ is continuous and can therefore only cross $ (A)_\delta$ finitely often.
Now if the event appearing in \eqref{bothA} occured, but \ref{Aii} did not, then $k\neq m$. Suppose that $k> m$; the other case is similar. This means that $\tau_k<T$ and $\tau^-_k\geq T$.

Let us assume \obda\ that $T<\beta$, which we can because we can choose \bet\ as small as we wish. It is not hard to see that, unless \ref{Ai} occured, $\tau^-_k < 2T$ holds, since the two processes only differ in their excursions inside $\bigcup \cc$ or $\bigcup \cc$, and their duration yields a bund on how much $\tau^-_k$ can differ from $\tau_k$.  

Note that $\tau^-_k- \tau_k \leq OT_{\tau^-_k}(E_\cc) - OT_{\tau_k}(\bigcup \cc) \leq OT_{2T}(E_\cc) - OT_{\tau_k}(\bigcup \cc)$ by the above argument. Thus if the event \ref{Ai} did not occur, then  $\tau^-_k - T \leq \beta$ holds since $\tau_k<T$. Since $B^-_n(\tau^-_k)$ has just crossed $(A)_\delta$, this means that either $B^-_n(T)$ is in $(A)_\delta$, or $B^-_n([T,T + \beta])$ traversed an edge in $(A)_\delta$; but this is event \ref{Aii} or \ref{Aiii} respectively.

This proves our claim that the event appearing in \eqref{bothA} implies one of the above events.
The probability of each of these 3 events can be shown to be less than $\eps/15$: firstly, by \Lr{ellh2}, and by  \ref{cciv}  and \eqref{pp}, given $L,T,\eps$ and $\bet$ we can 
 make the expectation of $\Delta_1$ and $\Delta_2$ arbitrarily small if we can make $\ell(G^f_n\sm P)$ small enough. We can make the latter arbitrarily small indeed because it is bounded from above by the discrepancy $\delta(f)$ of $f$, which we can make arbitrarily small by \ref{cfii} in \Tr{corstruct}; here, we use the fact that $ \ell(G^f_n)\leq \Hm(f)$ and $\ell(P)\geq d(f^0,f^1)$. Thus the probability of \ref{Ai} can be made less than $\eps/15$.

Secondly, \eqref{deleps} shows that the probability of \ref{Aii} is less than $\eps/15$ as well. Finally, the choice of \bet\ (recall \eqref{bet}) makes \ref{Aiii} equally unlikely. This completes the proof of \eqref{bothA}, which implies in particular
\labtequ{bothAp}{$|\Pr[B_n(T) \in A] - \Pr[B^-_n(T)\in A]| < \eps/5$.}

\subsubsection{The second coupling} \label{second}

\readi{The reader who chose to read the simplified version can assume that $G^-_n=G_n$ and $B^-_n= B_n$. This reader will also need the following definitions. Let $\Theta':=\Theta$ and $e'_f=e_f=f$. For each point $p\in \Theta$, choose a further point $p''$ inside $f$ that is close to $p$ (\fig{thetas}); more precisely, we choose these points $p''$ in such a way that, letting $e_p$ be the interval of $f$ between $p$ and $p''$, we have
$\sum_{p\in \Theta} \ell(e_p) \approx 0$ \eqref{ppp}.
Let also $e_{p'}= e_p$ and skip to \Dr{def}.}


In this section we will couple the processes $B^-_n$ with jump process $B^*_n$, which we will later show that can be coupled between them for various values of $n$.

Recall that the effective resistance $R_{G^f_n}(f^0,f^1)$, which we assigned to each edge $e_f$ as its length $\ell(e_f)$, converges to a value $R_f$ from above. Thus \fe\ such edge $e_f, f\in \cf$, we can choose an interval $e'_f$ with length $\ell(e'_f)= R_f$ independent of $n$. 

Let $\Theta'$ denote the set of endpoints $\partial \bigcup_{f\in \cf} e'_f$ of these edges, and note that each point $p'\in \Theta'$ is close to a point $p\in \Theta$ by \ref{rf}; more precisely, letting $e_p$ be the interval of $e_f$ between $p$ and $p'$, we have
\labtequ{ppp}{$\sum_{p\in \Theta} \ell(e_{p}) <h$,}
where $h=h(\eps,T)$ is a parameter that we can choose to be as small as wish by choosing $n$ large enough.

For each such point $p'\in \partial e'_f$ we choose a further point $p''$ inside $e_f$ that is close to $p'$ (\fig{thetas}); more precisely, we choose these points $p''$ in such a way that, letting $e_{p'}$ be the interval of $e_f$ between $p'$ and $p''$, we have
\labtequ{pppp}{$\sum_{p'\in \Theta} \ell(e_{p'}) < h$.}
\epsfxsize=.7\hsize

\showFig{thetas}{The sets $\Theta,\Theta'$ and $\Theta''$ around two components in $\ck$.}

Let $\Theta'':= \{p'' \mid p' \in \Theta'\}$. We will use the points in $\Theta'$ and $\Theta''$ similarly to the sets $\Pi, \Pi'$ in the previous section to produce a new process $B^*_n$ coupled with $B^-_n$.

\begin{definition} \label{def}
Let $G^*_n$ be the metric graph obtained from $G^-_n$ by contracting each component of $G^-_n\sm \Theta'$ containing an element $K$ of $\ck$ ---recall that this was the (finite) set of components of $X \sm \cf$--- into a vertex $v_K$. 
\end{definition}
Thus each contracted set comprises a $K\in \ck$ and a short subedge of each edge of $G^-_n$ incident with $K$.

Note that $G^*_n$ is isometric to $G^*_m$ for $n,m$ large enough, because $\ck$ and $\cf$ are fixed and so are the lengths of the edges $e'_f$ of $G^*_n$. We can thus denote by $G^*$ a metric graph isometric to all  $G^*_n$, and let $\iota_n:G^*_n \to G^*$ be the corresponding isometry. Moreover, if $f\in \cf$ happens to be an edge, e.g.\ one of the edges in $(A)_\delta$, then we have $e'_f=f$ in the above definition; this means that $\iota_n(\partial A)= \iota_m(\partial A)$.


We now modify $B^-_n(t)$ into a jump process $B^*_n(t)$ on $G^-_n$, that can also be thought of as a jump process on $G^*_n$. The jumps are always performed from $\Theta'\cup \sgl{o_n}$ to $\Theta''$ and are quite local, so that $B^*_n(t)$ is similar to $B^-_n(t)$. The advantage of $B^*_n(t)$ is that we can couple these processes for various values of $n$ more easily, since they can be projected to the fixed graph $G^*$ via $\iota_n$. Moreover, it will turn out that the event we are interted in, namely whether $B^-_n(T)$ lies in $A$ or not, is tantamount to the projected particle being in the right side of $G^*\sm \iota(\partial A)$.

\medskip
To obtain $B^*_n(t)$ from $B^-_n(t)$, we first sample the path of the latter, then we go through  this path and each time we visit a point $x$ in $\Theta'$, we jump from $x$ directly to the first point $y$ in $\Theta''$ visited afterwards, removing the corresponding time interval from the domain of $B^-_n(t)$ to obtain $B^*_n(t)$ (at the time instant $t$ where this jump occurs we set $B^*_n(t)=y$, say, so that $B^*_n(t)=y$ is right-continuous). 

Recall that $o_n\in \bigcup \ck$ \fe\ $n$ \eqref{oinK}.
When constructing $B^*_n(t)$ from $B^-_n(t)$, we thus also jump over the initial subpath of $B^-_n(t)$ from $o_n$ to the first point $y$ in $\Theta''$ visited, so that $B^*_n(0) = y\in \Theta''$.

Note that $\Theta'$ and $\Theta''$ are finite sets, whence closed in $X$, and so for any topological path (like $B^-_n(t)$) the first visit to any of them is well-defined by elementary topology. Note moreover that we have only finitely many such jumps in the time interval $[0,T]$ because $B^-_n$ is continuous. 

As mentioned above, $B^*_n(t)$ can be thought of as a jump process on $G^*_n$ or $G^*$; the jumps occur whenever a vertex of $G^*$ is visited, and lead to a nearby point of an edge incident with that vertex. From then on, the process behaves like standard \BM\ untill the next visit to a vertex. We will use \Lr{ellh2} to show that the time intervals jumped by $B^*_n(t)$ are relatively short, and so the two processes $B^-_n(t)$ and $B^*_n(t)$ are very similar.


\medskip
\subsubsection{The jump process $B^*_n$ is similar to $B^-_n$}

Our next aim is to show that $B^-_n$ behaves similarly to $B^*_n$ \wrt\ our open set $A$; more precisely, we claim that 
\labtequ{bothA2}{$\Pr[\{B^*_n(T) \in A \text{ and }  B^-_n(T) \not\in A\}\text{ or } \{B^*_n(T) \not\in A \text{ and }  B^-_n(T) \in A\}] < \eps/5$.} 

The proof of this is almost identical to the proof of \eqref{bothA}, but we will reproduce it for the convenience of the reader.

Let $\Delta$ denote the total duration of the intervals `jumped' by $B^*_n$ in the time interval $[0,2T]$.
In order for the event appearing in \eqref{bothA2} to occur, at least one of the following events must occur:
\begin{enumerate}
 \item \label{A2i} $\Delta > \beta$;
 \item \label{A2ii} $B^-_n(T)$ is in $(A)_\delta$;
 \item \label{A2iii} $B^-_n([T,T + \beta])$ traverses an edge in $(A)_\delta$.
\end{enumerate}
To see this, let $\tau_i$ denote the time $t$ that $B^-_n(t)$ has just crossed $(A)_\delta$ for the $i$th time; thus $B^-_n(\tau_i)$ is an endpoint of $(A)_\delta$. Define $\tau^*_i$ similarly for $B^*_n(t)$. Again, if $o_n\in A$, then $B^-_n(\tau_{2i+1})\in A^c$ and $B^-_n(\tau_{2i})\in A$ \fe\ $i\in \N^*$ since $ (A)_\delta$ separates $A$ from its complement $A^c$, and so in order to `change sides' from $A$ to $A^c$ the particle has to cross $(A)_\delta$.

Let $k$ denote the largest integer \st\ $\tau_k<T$, and $m$ the largest integer \st\ $\tau^*_{m}<T$; since $ (A)_\delta$ is a finite edge-set, these numbers are well-defined since $B^-_n[0,T]\in C$ is continuous and can therefore only cross $ (A)_\delta$ finitely often.
Now if the event appearing in \eqref{bothA2} occured, but \ref{A2ii} did not, then $k\neq m$, hence $k< m$ since $B^*_n(t)$ is by definition faster than $B^-_n(t)$. This means that $\tau_m\geq T$ although $\tau^*_{m}< T$.

Let $Y:= \bigcup \ck \cup \bigcup_{p\in \Theta} (e_p \cup e_{p'})$, and recall that this is the subgraph of $G^-$ inside which $B^*_n(t)$ performs its jumps. Let us assume \obda\ that $T<\beta$, which we can because we can choose \bet\ as small as we wish. It is not hard to see that, unless \ref{A2i} occured, $\tau_m < 2T$ holds, since $\tau^*_{m}<T$ and the duration of the excursions inside $Y$ yields a bound on how much $\tau_m$ can differ from $\tau^*_m$.

Now note that $\tau_m- \tau^*_{m} \leq OT_{\tau_m}(Y; B^-_n)$. Thus if the event \ref{A2i} did not occur, then  $\tau_{m} - T \leq \beta$ holds since $\tau^*_{m}<T$. Since $B^-_n(\tau_{m})$ has just crossed $(A)_\delta$, this means that either $B^-_n(T)$ is in $(A)_\delta$, or $B^-_n([T,T + \beta])$ traversed an edge in $(A)_\delta$; but this is event \ref{A2ii} or \ref{A2iii} respectively.

This proves our claim that the event appearing in \eqref{bothA2} implies one of the above events.
The probability of each of these 3 events can be shown to be less than $\eps/15$: firstly, by \Lr{ellh2}, given $L,T,\eps$ and $\bet$ we can 
 make the expectation of $\Delta$ arbitrarily small if we can make $\ell(\bigcup \ck \cup \bigcup_{p\in \Theta} (e_{p} \cup e_{p'}))$ small enough. We can make the latter arbitrarily small indeed by \eqref{ppp}, \eqref{pppp} and by \ref{cfi} in \Tr{corstruct} since $\bigcup \ck$ is the complement of $\cf$. Thus the probability of \ref{A2i} can be made less than $\eps/15$.
Secondly, \eqref{deleps} shows that the probability of \ref{A2ii} is bounded by $\eps/15$ as well. Finally, the choice of \bet\ (recall \eqref{bet}) makes \ref{A2iii} equally unlikely. This completes the proof of \eqref{bothA2}, which implies in particular
\labtequ{bothA2p}{$|\Pr[B^-_n(T) \in A] - \Pr[B^*_n(T)\in A]| < \eps/5$.}

\subsubsection{$B^*_n$ is similar to $B^*_m$ for $n,m$ large; the last coupling}

We have thus shown that $\Pr[\{B^-_n(T) \in A]$ is very close to $\Pr[B^*_n(T) \in A]$. It remains to show that the dependence of the latter on $n$ can be ignored: we claim that
\labtequ{prz}{$|\Pr(B^*_n(D) \in A) - \Pr(B^*_m(D) \in A)| < \eps/5$.}
Combined with \eqref{bothAp} (\readi{which the reader of the simpler version can take for trivially true}) and \eqref{bothA2p}, this would imply \eqref{aim}.

For this, we would first like to bound the number of times that $B^*_n(T)$ commutes between $\Theta''$ and $\Theta'$. But this is easy to achieve:
Let $r:= min_{p' \in \Theta'} \ell(e_{p'})$. 
We claim that \ti\ a constant $M=M(r)$ \leth\ the probability that $B^-_n$ commutes between $\Theta''$ and $\Theta'$ more than $M$ times in the time interval $[0,T]$ is $< \eps/15 $. Indeed, as $r>0$, there is a positive probability $q$, depending only on $r$, that the time it takes $B^-_n$ to traverse any of the edges $e_{p'}, p'\in \Theta'$ is at least $T$. Since any commute between $\Theta''$ and $\Theta'$ involves such a traversal, commuting between $\Theta''$ and $\Theta'$ more than $M$ times in the time interval $[0,T]$ thus happens with probability at most $(1-q)^M$. Choosing $M$ large enough we can make this probability as small as we wish. 
As $\Delta$ is probably small (see previous section), we may assume that  the probability that $B^*_n([0,T])$ commutes between $\Theta''$ and $\Theta'$ more than $2M$ times is also less than $\eps/15$.

For the proof of \eqref{prz} it is useful to considered $B^*_n$, or rather $\iota_n(B^*_n)$, as a jump process on $G^*$, for then $B^*_n$ and $B^*_m$ take place on the `same' metric graph and are easier to couple. To achieve this coupling, we first construct a more convenient realisation of $B^-_n$ as follows. Pick \fe\ $p\in \Theta''$ a sequence $C^p_{n,1}(t), C^p_{n,2}(t) \ldots$ of i.i.d.\ sample paths of \BM\ on $G^-_n$, each distributed like $B^-_n(t)$ starting from $p$ and stopping upon their first visit to $\Theta'$. Similarly, pick \fe\ $q\in \Theta'$  a sequence $D^q_{n,1}(t), D^p_{n,2}(t) \ldots$ of i.i.d.\ sample paths of \BM\ on $G^-_n$, each distributed like $B^-_n(t)$ starting from $q$ and stopping upon their first visit to $\Theta''$. These sample paths can be glued together to produce a path distributed identically to $B^-_n(t)$: start a \BM\ at $o_n$, and stop it upon its first visit to a point $q$ in $\Theta''$. Append to this random path the path $C^p_{n,1}$. If the last point visited by the latter is $q$, then append $D^q_{n,1}$. Continue like this, appending paths of the form $C^p_{n,i}$ and $D^q_{n,j}$ alternatingly, each time choosing the right $p$ or $q$ and the smallest $i$ or $j$ for which the path  $C^p_{n,i}$ or $D^q_{n,j}$ has not been used yet. As \BM\ has the Markov property, the random path thus obtained has indeed the same distribution as $B^-_n$.

The advantage of this realisation of $B^-_n$ is that the paths $C^p_{n,i}$ can be coupled with the $C^p_{m,i}$ \fe\ $n,m$. Now note that by construction, the process $B^*_n$ is obtained from $B^-_n$ by discarding all the $D^q_{n,i}$ in the above construction, 
as well as the initial path from $o_n$ to the first visit to $\Theta''$.

This means that another realisation of $B^*_n$ can be constructed directly by concatenating random paths of the form $C^p_{n,i}$ rather than first constructing $B^-_n$ as above, and then discarding some of its subpaths. For this, we choose a random starting point $p\in \Theta''$  according to the distribution $P^{o}_n$ of the first point in $\Theta''$ visited by \BM\ from $o_n$ in $G^-_n$, and use the path $C^p_{n,1}$. Then we recursively concatenate this path with further paths of this form. In order to decide which path $C^p_{n,i}$ to use next, let $q\in \Theta'$ be the last point visited by the last such path used, choose  a random $p\in\Theta''$ according to the distribution $P^q_n$ of the first point in $\Theta''$ visited by \BM\ from $q$ on $G^-_n$, and use $C^p_{n,i}$ for the least $i$ for which this path has not been used yet to extend the path obtained so far. 

The probability distributions $P^q_n, q\in \Theta' \cup \{o_n\}$ used above depend little on $n$: note that $P^q_n$ and $P^q_m$ have the same finite domain $\Theta''$. By Lemmas~\ref{trpr} and \ref{trprconv}, these distributions converge. This means that we can couple the experiments of choosing one point in $\Theta''$ according to $P^q_n$ and one according to $P^q_m$ in such a way that the probability that the two experiments yield a different point is smaller than $\eps/15(2M)$, say, if $n,m$ are sufficiently large (this remains true if $q=o_n$ in the first case and $q=o_m$ in the second).

Combining this coupling with that of the $C^p_{n,i}$, we deduce that $B^*_n$ can be coupled with $B^*_m$ in such a way that they coincide up to the first time that they jump to a distinct element of $\Theta''$, an event occuring with probability smaller than $\eps/15(2M)$ each time that a jump is made. The choice of $M$ now implies that $B^*_n$ coincides with $B^*_m$ up to time $T$ with probability at least $1- \eps/10$ when the processes are so coupled. This proves \eqref{prz}.

Combining this with \eqref{bothAp} and \eqref{bothA2p}, each applied once for $l=n$ and once for $l=m$, yields $|\Pr(B^-_n(T) \in A) - \Pr(B^-_m(T) \in A)|< < \eps$, and so $\Pr(B^-_n(T) \in A) $ converges indeed.
\end{proof}

\section {Strong Markov Property} \label{ssmp}
By the previous section we know that for any open $A$ in $G$ and $x_n\to x$ the probabilities $\p{x_n}{B_n(t)\in A}$ converge to $\p{x}{B(t)\in A}$. 
\note{Is it true? It is crucial for the rest\\ YES!}For any continuous function $f$ on $G$, by  portmanteau theorem\note{pls provide reference}, $P^n_tf(x_n)$ also converge to $P_tf(x):=\e{x}{f(B(t))}$  where $P^n_tf(y)=\e{y}{f(B_n(t))}$ for $y\in G_n$ and it is extended by 0 to $G$.

The strong Markov property follows  by similar methods as in \cite{Barlow1989}.  We start with elementary lemma
\begin{lemma}
Suppose $f$ and $f_n$ are functions on $G$ with the property that  $f_n(x_n) \to f (x)$ whenever  $x_n\in G_n$, $x_n\to x$. Then  $f$ is continuous and $$\sup_{y\in G_n}|f_n(y)- f(y)|\to0.$$
\end{lemma}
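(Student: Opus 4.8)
The plan is to obtain both conclusions from one compactness-and-diagonalisation argument, the only input beyond the hypothesis being the \emph{uniform} density of the approximating graphs in the underlying \gl\ continuum $G$, i.e.\ $\rho_n:=\sup_{x\in G}d(x,G_n)\to0$. I would first record this. Given $\eps>0$, \Lr{arc} yields a finite edge-set $S$ all of whose complementary components have diameter $<\eps$, and after subdividing we may assume every edge in $S$ has length $<\eps$; property \ref{comps} of a \gseq\ (together with the finiteness of the set of components of $G\setminus S$, cf.\ \cite{gl}) then forces $G_n$ to meet every component of $G\setminus S$ once $n$ is large, whence every point of $G$ lies within $2\eps$ of $G_n$ for such $n$ (a point on an edge $e\subseteq S$ being within $\eps$ of an endvertex of $e$, which itself lies in $G\setminus S$). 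Consequently every $x\in G$ is the limit of some sequence $x_n\in G_n$; and any ``partial sequence'' $(y_k)$ with $y_k\in G_{n_k}$, $n_1<n_2<\cdots$ and $y_k\to x$ extends to a full sequence $(x_n)$ with $x_n\in G_n$ and $x_n\to x$, by setting $x_{n_k}:=y_k$ and choosing $x_n\in G_n$ with $d(x_n,x)<\rho_n+1/n$ for the remaining $n$.

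Next I would prove that $f$ is continuous, arguing by contradiction. Suppose there are $\eps_0>0$, a point $x\in G$ and $z_k\to x$ with $|f(z_k)-f(x)|\ge\eps_0$ for all $k$. For each $k$ pick $w^k_n\in G_n$ with $w^k_n\to z_k$; by hypothesis $f_n(w^k_n)\to f(z_k)$, so there is $N_k$, and we may take $N_1<N_2<\cdots$, with $d(w^k_n,z_k)<1/k$ and $|f_n(w^k_n)-f(z_k)|<\eps_0/4$ whenever $n\ge N_k$. Put $x_n:=w^k_n$ for $N_k\le n<N_{k+1}$ (and $x_n\in G_n$ arbitrary for $n<N_1$). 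Then $d(x_n,x)\le d(w^k_n,z_k)+d(z_k,x)<1/k+d(z_k,x)\to0$, so $x_n\to x$ and hence $f_n(x_n)\to f(x)$; but for $N_k\le n<N_{k+1}$, $|f_n(x_n)-f(x)|\ge|f(z_k)-f(x)|-|f_n(w^k_n)-f(z_k)|>3\eps_0/4$, contradicting $f_n(x_n)\to f(x)$. Hence $f$ is continuous.

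Finally I would prove $\sup_{y\in G_n}|f_n(y)-f(y)|\to0$, again by contradiction. If it fails, there are $\eps_1>0$, indices $n_1<n_2<\cdots$ and $y_k\in G_{n_k}$ with $|f_{n_k}(y_k)-f(y_k)|\ge\eps_1$; by compactness of $G$ we may assume $y_k\to x$ for some $x\in G$. Extend $(y_k)$ to a sequence $(x_n)$ with $x_n\in G_n$, $x_n\to x$ and $x_{n_k}=y_k$. Then $f_n(x_n)\to f(x)$ by hypothesis, so $f_{n_k}(y_k)\to f(x)$, while $f(y_k)\to f(x)$ by the continuity of $f$ just established; thus $|f_{n_k}(y_k)-f(y_k)|\to0$, a contradiction.

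The only genuinely non-routine step is the uniform density $\rho_n\to0$, which is what makes the hypothesis meaningful — there must be enough sequences $x_n\in G_n$ converging to each point of $G$, otherwise the statement is false; the rest is a routine diagonal/compactness argument, and notably it uses no continuity or any other regularity of the individual $f_n$.
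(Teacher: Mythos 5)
Your proof is correct and follows essentially the same route as the paper's: a diagonal argument resting on the density of the approximating graphs to get continuity of $f$, then a contradiction argument combining compactness of the space, the hypothesis and the just-established continuity for the uniform convergence. The only difference is presentational: you make explicit the uniform density $\sup_{x}d(x,G_n)\to 0$ and the step of extending a partial sequence $y_k\in G_{n_k}$ to a full sequence $x_n\in G_n$ before invoking the hypothesis, points which the paper's terser proof (which instead interleaves constant sequences, implicitly using $G_n\subseteq G_{n+1}$) leaves to the reader.
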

\begin{proof}
 In order to prove continuity observe that, by density of $\bigcup G_n$ in $G$, it is enough that show that for $x_n\in G_n$, $x_n\to x$ we have $f(x_n)\to f(x)$. Since $f_m(x_n)\to f(x_n)$, we can take an increasing sequence $m_n$ such that $f_{m_n}(x_n)- f(x_n)$ goes to zero. Since  $f_{m_n}(x_n)$ is a subsequence of $f_k(x'_k)$, where $x'_k=x_n$ when $k\in [m_n, m_{n+1})$, $f_{m_n}(x_n)\to f(x)$. This gives that $f$ is continuous.
 
 Suppose that the second part of the theorem fails. Then we have a subsequence $n_k$ and $x_{n_k}\to x$ with $|f_{n_k}(x_{n_k})-f(x_{n_k})|>\epsilon$ for some $\epsilon>0$. But 
 $$|f_{n_k}(x_{n_k})-f(x_{n_k})|\le|f_{n_k}(x_{n_k})-f(x)|+|f(x)-f(x_{n_k})|$$ goes to zero by assumption and the continuity of $f$. This contractions proves the theorem.
\end{proof}

\begin{corollary}For $t>0$ and a continuous function $f$ on $G$, $P_tf$ is also continuous and 
 $$sup_{y\in G_n}|P_tf(y)-P_t^nf(y)|\to0.$$
\end{corollary}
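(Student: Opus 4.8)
The plan is to deduce this immediately from the lemma just proved, applied with $f_n:=P^n_tf$ (viewed as a function on $G$ by extending it by $0$ outside $G_n$) in the role of the sequence and with $P_tf$ in the role of its limit. So fix $t>0$ and a continuous function $f$ on $G$. Note $P_tf(x):=\e{x}{f(B(t))}$ is well defined for every $x\in X$ via the process $B$ started at $x$ furnished by \Tr{thuniq}. Since $G=X$ is compact, $f$ is bounded, hence $b\mapsto f(b(t))$ is a bounded continuous functional on the path space $C$.

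The one thing I would need to check is the hypothesis of the lemma, namely that $P^n_tf(x_n)\to P_tf(x)$ whenever $x_n\in G_n$ and $x_n\to x$ in $G$. This is exactly the content of the portmanteau remark preceding the lemma: by \Tr{thuniq} the standard \BM s $B_n$ started at $x_n$ converge weakly to $B$ started at $x$, and evaluating this weak convergence on the bounded continuous functional $b\mapsto f(b(t))$ (equivalently, applying the portmanteau theorem to the laws of $B_n(t)$ together with the convergence of $\p{x_n}{B_n(t)\in A}$ over open $A$ established in \Sr{SeqUniq}) gives $\e{x_n}{f(B_n(t))}\to\e{x}{f(B(t))}$, i.e.\ $P^n_tf(x_n)\to P_tf(x)$.

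With this in hand the lemma applies verbatim and yields both conclusions at once: $P_tf$ is continuous, and $\sup_{y\in G_n}|P^n_tf(y)-P_tf(y)|\to 0$, which is precisely the assertion. I do not expect any substantial obstacle here: the only step that is not completely formal is the passage from the open-set convergence of \Sr{SeqUniq} to convergence of expectations of continuous test functions, and that is a routine application of the portmanteau theorem on the compact metric space $X$; the rest is a direct invocation of the preceding lemma.
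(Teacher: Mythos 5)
Your proposal is correct and matches the paper's own (implicit) argument: the corollary is exactly the preceding lemma applied with $f_n:=P^n_tf$ and $f:=P_tf$, the hypothesis being supplied by the portmanteau/weak-convergence remark at the start of the section. Nothing essential differs.
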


\begin{theorem} \label{smp}
 $P_t$ is a Feller semigroup. In particular the process $B(t)$ satisfies the strong Markov property.
\end{theorem}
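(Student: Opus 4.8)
The plan is to verify that $(P_t)_{t\ge0}$ is a Feller semigroup on $C(G)=C_0(G)$ (recall $G$ is compact) and then to invoke the classical fact that a Markov process with a Feller semigroup and right-continuous paths is strongly Markov. Concretely one must check: (a) $P_t$ maps $C(G)$ into itself; (b) $(P_t)$ is a positive, conservative ($P_t1=1$) contraction semigroup; (c) $\lVert P_tf-f\rVert_\infty\to0$ as $t\downarrow0$ for every $f\in C(G)$. Item (a) is precisely the preceding Corollary. For (b), each $(P^n_t)$ --- the transition semigroup of standard \BM\ on the finite metric graph $G_n$ --- is a positive, conservative contraction semigroup, and these properties pass to the limit: the contraction bound, positivity and $P_t1=1$ follow pointwise from $P^n_tf(x_n)\to P_tf(x)$ (valid for $x_n\to x$, $f\in C(G)$, as noted before the Corollary), and for the semigroup law write $P^n_{t+s}f(x_n)=P^n_t(P^n_sf)(x_n)$; since $P^n_sf\to P_sf$ uniformly on $G_n$ (Corollary) and $P^n_t$ is a contraction we have $P^n_t(P^n_sf)(x_n)-P^n_t(P_sf)(x_n)\to0$, while $P^n_t(P_sf)(x_n)\to P_t(P_sf)(x)$ because $P_sf\in C(G)$; comparing with $P^n_{t+s}f(x_n)\to P_{t+s}f(x)$ gives $P_{t+s}f=P_tP_sf$. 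Together with the convergence of finite-dimensional distributions from \Sr{SeqUniq}, this also identifies $B$ as the Markov process with semigroup $(P_t)$.

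The substance of the argument is (c), and the one delicate point there is uniformity in $n$. First I would record a small-time displacement bound that is uniform in $n$ \emph{and} in the starting point. Fix $\eps>0$, let $R_\eps$ be the finite edge-set of \Prr{mineps} taken with $\rho=\eps$, and set $\eps_1:=\min_{e\in R_\eps}\ell(e)>0$. If $d(B_n(t),B_n(0))>\eps$ then, by \Prr{mineps} applied to the path $B_n|_{[0,t]}$, this path traverses some edge $e\in R_\eps$; in particular it passes through the midpoint $m_e$ of $e$ and, after the first time it does so, still has to reach an endpoint of $e$ within the remaining time $\le t$, i.e. a standard real \BM\ started at $m_e$ (which is how $B_n$ behaves inside $e$ up to its first exit) must move distance $\ge\eps_1/2$ within time $t$. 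By the strong Markov property of $B_n$ at that hitting time, the reflection principle, and a union bound over $R_\eps$,
$$\sup_n\ \sup_{y\in G_n}\ \Pr_y\!\big(d(B_n(t),y)>\eps\big)\ \le\ 2\lvert R_\eps\rvert\,\Pr\!\big(\lvert W(t)\rvert\ge\eps_1/2\big)\ =:\ \psi_\eps(t),$$
where $W$ is standard \BM\ on $\R$; note $\psi_\eps(t)\to0$ as $t\downarrow0$. This is essentially the one-window version of the estimate already used in the proof of \Lr{exist}, and I expect it to be the only genuinely new computation.

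Next, fix $f\in C(G)$ and $\eta>0$. Since $G$ is a compact metric space, $f$ is uniformly continuous (Heine--Cantor), so pick $\eps>0$ with $\lvert f(y)-f(y')\rvert<\eta$ whenever $d(y,y')<\eps$. Then for every $n$ and $y\in G_n$,
$$\lvert P^n_tf(y)-f(y)\rvert\ \le\ \Ex_y\big[\lvert f(B_n(t))-f(y)\rvert\big]\ \le\ \eta+2\lVert f\rVert_\infty\,\psi_\eps(t).$$
Taking (as we may, cf.\ \Sr{secCon}) the \gseq\ \seq{G} nested, $\bigcup_{n\ge N}G_n$ is dense in $G$ for every $N$; since $P_tf-f$ is continuous, $\lVert P_tf-f\rVert_\infty=\sup_{n\ge N}\sup_{y\in G_n}\lvert P_tf(y)-f(y)\rvert$. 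Using $\lvert P_tf(y)-f(y)\rvert\le\lvert P_tf(y)-P^n_tf(y)\rvert+\lvert P^n_tf(y)-f(y)\rvert$ together with the Corollary --- which, given any $\eps'>0$, provides $N$ with $\sup_{y\in G_n}\lvert P_tf(y)-P^n_tf(y)\rvert<\eps'$ for all $n\ge N$ --- one gets $\lVert P_tf-f\rVert_\infty\le\eps'+\eta+2\lVert f\rVert_\infty\,\psi_\eps(t)$ for every $\eps'>0$, hence $\lVert P_tf-f\rVert_\infty\le\eta+2\lVert f\rVert_\infty\,\psi_\eps(t)$. Letting $t\downarrow0$ gives $\limsup_{t\downarrow0}\lVert P_tf-f\rVert_\infty\le\eta$, and since $\eta$ was arbitrary, $\lVert P_tf-f\rVert_\infty\to0$. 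Thus $(P_t)$ is a Feller semigroup.

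It then remains to deduce the strong Markov property. By \Tr{thmain} the process $B$ has continuous --- hence right-continuous --- sample paths, and by the above it is a Markov process whose transition semigroup $(P_t)$ is Feller; the strong Markov property is the standard consequence in this setting (the argument is the one indicated in \cite{Barlow1989}; see also any textbook treatment of Feller processes). The main obstacle is thus the uniform-in-$n$ small-time estimate $\psi_\eps$ of the second paragraph; everything else is a routine passage to the limit built on the already-established convergence $P^n_tf\to P_tf$.
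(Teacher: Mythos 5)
Your proof is correct and follows the same skeleton as the paper's: the preceding Corollary gives that $P_t$ preserves $C(G)$ and that $P^n_tf\to P_tf$ uniformly on $G_n$, the semigroup law is obtained by essentially the same three-term decomposition of $P^n_tP^n_sf(x_n)-P_tP_sf(x)$ (contraction, uniform convergence, and continuity of $P_sf$ and $P_tP_sf$), and the strong Markov property is then deduced as the standard consequence of the Feller property, in the spirit of the Barlow reference cited in the paper. Where you diverge is the strong continuity at $t=0$: the paper disposes of it in one line, noting that path continuity of $B$ and $B(0)=x$ give the pointwise convergence $P_tf(x)\to f(x)$ (implicitly relying on the standard fact that, for a positivity-preserving contraction semigroup on the continuous functions of a compact space, pointwise convergence as $t\downarrow 0$ already yields strong continuity). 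You instead prove $\lVert P_tf-f\rVert_\infty\to 0$ directly, via a small-time displacement estimate that is uniform in $n$ and in the starting point, built from \Prr{mineps} and the reflection principle exactly as in the proof of \Lr{exist}, and then combined with uniform continuity of $f$ and density of $\bigcup_{n\ge N}G_n$. This costs you one extra (but sound) computation and buys a self-contained, quantitative verification of the Feller property that does not invoke the weak-implies-strong continuity fact; it also makes explicit the positivity, conservativity and contraction checks that the paper leaves unstated. Both routes are valid, and yours arguably fills in the step the paper treats most tersely.
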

\begin{proof}
 By the previous corollary we know that  $P_t$ maps $C(G)$ into $C(G)$. First we show that it the family $\{P_t\}$ is a semigroup.
 
 From the Markov property of $B_n$ we have that $P^n_{t+s}=P^n_tP^n_s$. Therefore it is enough to show that $P^n_tP^n_sf(x_n)$ converge to $P_tP_sf(x)$ whenever $x_n\to x$.
 \begin{align*}
  |P^n_tP^n_sf(x_n)-P_tP_sf(x)|\\
  \le |P^n_tP^n_sf(x_n)-P^n_tP_sf(x_n)|+|P^n_tP_sf(x_n)-P_tP_sf(x_n)|\\+|P_tP_sf(x_n)-P_tP_sf(x)|
 \end{align*}
 Since the first term is bounded by  $\sup_{y\in G_n}|P^n_sf(y)-P_sf(y)|$ it goes to 0 by the previous corollary. Similarly, the second term converge to zero since $P_sf$ is continuous. The last term vanishes since $P_tP_sf$ is continuous. 

 Since,  $B(t)$ is continuous and $B(0)=x$, we have $P_tf(x)\to f(x)$ for any continuous function $f$.
\end{proof}

\section{Cover Time} \label{cover}

The (expected) \defi{cover time} $CT_o(G)$ of a finite metric graph \g from a point $o\in G$ is the expected time untill standard \BM\ from $o$ on \g has visited every point of \G. The cover time of \g is $CT(G):= \sup_{o\in G} CT_o(G)$. It is proved in \cite{edgecov} that \ti\ an upper bound on $CT(G)$ depending only on the total length $\ell(G)$ of \g and not on its structure

\begin{theorem}[\cite{edgecov}] \label{GeWi}
For every finite graph $G$ and $\ell: E(G) \to \R_{>0}$, we have $CT(G)\leq 2\ell(G)^2$.
\end{theorem}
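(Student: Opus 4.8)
The plan is to bound the cover time of all \emph{points} of $G$ by a limit of cover times of the \emph{vertices} of successive subdivisions of $G$, and to control each vertex cover time by a depth-first tour whose expected length is a sum of commute times, which \Lr{com} expresses through effective resistances.

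For the first step, write $G_j$ for the graph obtained from $G$ by subdividing every edge into $2^j$ equal pieces. As a metric graph $G_j$ is identical to $G$, so \BM\ on it is literally the same process, and $\ell(G_j)=\ell(G)$. Since the vertex sets $V(G_j)$ are nested and their union is dense in $G$, I claim the full cover time of $G$ equals $\lim_j \Ex_o[\max_{v\in V(G_j)} \tau_v]$. Indeed, let $C_G:=\inf\{t: B([0,t])=G\}$; for each $j$ the quantity $M_j:=\max_{v\in V(G_j)}\tau_v$ is nondecreasing in $j$ and bounded by $C_G$, while at time $L:=\lim_j M_j$ every vertex of every $G_j$ has been visited, so $B([0,L])$ is a closed set containing the dense set $\bigcup_j V(G_j)$ and hence all of $G$; thus $L=C_G$, and $\Ex_o[C_G]=\lim_j \Ex_o[M_j]$ by monotone convergence. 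So it suffices to bound the vertex cover time of each $G_j$ by $2\ell(G)^2$, uniformly in $j$.

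For the second step, fix a connected finite metric graph $H$ (later $H=G_j$) with a spanning tree $T$, and start \BM\ at $o\in V(H)$. Let $o=w_0,w_1,\dots,w_{2(|V(H)|-1)}=o$ be the closed depth-first walk around $T$, which visits every vertex and uses each directed tree edge exactly once. Because this walk meets every vertex, the time to hit $w_0,w_1,\dots$ in this order dominates the vertex cover time, and by the \smp\ its expectation is $\sum_i \Ex_{w_i}[\tau_{w_{i+1}}]$. Grouping the two directions of each tree edge, this sum equals $\sum_{uv\in T}\big(\Ex_u[\tau_v]+\Ex_v[\tau_u]\big)=\sum_{uv\in T} 2\ell(H)R_H(u,v)$ by \Lr{com}. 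Finally $R_H(u,v)\le \ell(uv)$ for an edge $uv$, since routing a unit current along that single edge is a \flo{u}{v} of energy $\ell(uv)$; hence the sum is at most $2\ell(H)\sum_{uv\in T}\ell(uv)\le 2\ell(H)\,\ell(H)=2\ell(H)^2$. Applying this with $H=G_j$ and $\ell(G_j)=\ell(G)$ gives the uniform bound, and letting $j\to\infty$ yields $CT(G)\le 2\ell(G)^2$.

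The routine inputs here are the commute-time identity (\Lr{com}) and the elementary resistance estimate $R_H(u,v)\le\ell(uv)$; the step needing the most care is the passage from discrete vertex covering to covering all points, i.e.\ justifying that $M_j\uparrow C_G$ almost surely and that the limit of the expectations is the expected full cover time. This rests only on path-continuity of \BM, compactness of $G$, and density of $\bigcup_j V(G_j)$, so I expect no serious difficulty, but it is the one place where the continuous nature of the space (as opposed to a discrete graph) genuinely enters.
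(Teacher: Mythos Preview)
The paper does not prove \Tr{GeWi} at all; it is simply quoted from \cite{edgecov} and then used as a black box in the proof of \Tr{CT}. So there is no ``paper's own proof'' to compare against.

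That said, your argument is correct and is essentially the classical spanning-tree bound of Aleliunas--Karp--Lipton--Lov\'asz--Rackoff, transported to the metric-graph setting via \Lr{com}. The second step (depth-first tour, commute-time identity, $R_H(u,v)\le\ell(uv)$ for an edge) is exactly the standard proof that the vertex cover time of a weighted graph is at most $2\ell(H)\,\ell(T)\le 2\ell(H)^2$. The first step---passing from vertex cover times of the subdivisions $G_j$ to the cover time of all points of $G$---is the only genuinely continuous ingredient, and your justification is fine: $M_j$ is nondecreasing, bounded above by $C_G$, and its a.s.\ limit $L$ satisfies $B([0,L])\supseteq\bigcup_j V(G_j)$, hence $B([0,L])=G$ by density and closedness (using continuity of $B$ and compactness of $[0,L]$ when $L<\infty$; the case $L=\infty$ is trivial). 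Monotone convergence then gives the result. One small point worth making explicit: you should note that the starting point $o$ can always be taken to be a vertex of $G$ (by subdividing an edge if necessary), so that $o\in V(G_j)$ for every $j$ and the depth-first walk can indeed be rooted at $o$.
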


In this section we use this fact to deduce the corresponding statement for our \BM\ $B$ on a \gl\ continuum $X$: defining $CT(X)$ as above, with standard \BM\ replaced by our process $B$, we prove
\begin{theorem} \label{CT}
For every \gl\ continuum $X$ with $\Hm(X)=L<\infty$, we have $CT(X)\leq 20 L^2$.
\end{theorem}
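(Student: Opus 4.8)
My plan is to transfer the finite-graph cover time bound $CT(G)\le 2\ell(G)^2$ of \Tr{GeWi} to $X$ along the weak convergence $B_n\to B$ of \Tr{thmain}, thereby obtaining a polynomial tail estimate for the cover time of $X$, and then to boost this estimate to an exponentially decaying one by restarting the process at deterministic times and invoking the Markov property established in \Tr{smp}.

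First I would fix a starting point $x\in X$ and let $\tau$ denote the cover time of $X$ by $B$ from $x$, i.e.\ the first time $t$ at which $B([0,t])=X$. I would also fix a \gseq\ \seq{G} of $X$ with the $\gn$ connected and nested; since the length of each edge of $\gn$ equals that of the corresponding (pairwise disjoint) arc of $X$, one has $\ell(G_n)\le\Hm(X)=L$. As $\bigcup_n G_n$ is dense in $X$, I would pick $o_n\in G_n$ with $o_n\to x$, so that by \Tr{thmain} the laws $\mu_n$ of standard \BM\ $B_n$ on $\gn$ from $o_n$ converge weakly, in $\cm(C_t(X))$, to the law of $B$ from $x$, for every fixed $t>0$.

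The first and main step would be to prove the polynomial bound $\p{x}{\tau>t}\le 2L^2/t$ for all $t>0$. For each $n$ consider the event $A_n:=\{p\in C_t(X):G_n\subseteq p([0,t])\}$ that the path covers $\gn$ by time $t$. Since $p([0,t])$ is compact, hence closed, $p\notin A_n$ exactly when some point of $\gn$ is at positive distance from $p([0,t])$, a condition stable under small perturbations of $p$ in the sup metric; so each $A_n$ is closed in $C_t(X)$. For $m\ge n$ one has $G_n\subseteq G_m$, hence $\mu_m(A_n)\ge\p{o_m}{B_m\text{ covers all of }G_m\text{ by time }t}\ge 1-2\ell(G_m)^2/t\ge 1-2L^2/t$ by Markov's inequality and \Tr{GeWi}. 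The portmanteau theorem, applied to the closed set $A_n$, then gives $\p{x}{B\in A_n}\ge\limsup_m\mu_m(A_n)\ge 1-2L^2/t$. Finally, since $B([0,t])$ is closed and $\bigcup_n G_n$ is dense, $B([0,t])=X$ if and only if $B\in A_n$ for every $n$; and the events $A_n$ are nested decreasingly because the $G_n$ are nested increasingly, so
\[
\p{x}{\tau\le t}=\p{x}{\bigcap_n A_n}=\lim_n\p{x}{B\in A_n}\ge 1-\frac{2L^2}{t},
\]
which is the claim; in particular $\tau<\infty$ almost surely.

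To finish, I would set $t_0:=4L^2$, so that $\sup_{y\in X}\p{y}{\tau>t_0}\le\tfrac12$ by the previous step. Noting that $\tau=\sup_{y\in D}\tau_y$ over a countable dense set $D\subseteq X$ (with $\tau_y$ the hitting time of $y$) is a stopping time, and that on $\{\tau>kt_0\}$ the conditional probability given $\mathcal F_{kt_0}$ that $X$ is still uncovered at time $(k+1)t_0$ is at most $\sup_{y\in X}\p{y}{\tau>t_0}\le\tfrac12$ by the Markov property of \Tr{smp}, one gets $\p{x}{\tau>kt_0}\le 2^{-k}$ for all $k\ge 1$ by induction. Therefore
\[
\e{x}{\tau}=\int_0^\infty\p{x}{\tau>s}\,ds\le\sum_{k\ge0}t_0\,\p{x}{\tau>kt_0}\le t_0\sum_{k\ge0}2^{-k}=2t_0=8L^2\le 20L^2,
\]
and taking the supremum over $x$ yields $CT(X)\le 20L^2$ (the constant is not optimized).

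The hard part is the interchange of the two limits: the limit $m\to\infty$ from the weak convergence of $B_m$ to $B$, and the limit $n\to\infty$ needed to pass from covering each $\gn$ to covering all of $X$. The way around it is to record ``$B_m$ covers $\gn$'' as the \emph{closed} event $A_n$ (so that portmanteau produces an inequality in the useful direction), to use that $B([0,t])$ is closed so that covering the dense set $\bigcup_n G_n$ already forces $B([0,t])=X$, and to exploit that the $A_n$ are nested. A secondary but essential subtlety is that the tail $2L^2/t$ from the first step is not integrable, so the Markov-property restart is genuinely needed to upgrade it to an exponentially small tail before integrating.
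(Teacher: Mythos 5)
Your proof is correct, but it takes a genuinely different route from the paper's. The paper never invokes the Markov property of the limit process in this argument: it boosts the finite-graph bound of \Tr{GeWi} \emph{before} passing to the limit, proving in \Lr{lem:second.moment} (by essentially the same restart-at-times-$2Qk$ argument you use, but applied to \BM\ on the finite graphs $\gn$, where the Markov property is unproblematic) that $\Exp{\cov_x^2}\le 24Q^2$ uniformly in the starting point; it then transfers this second-moment bound to $X$ in \Lr{lem:covere} via weak convergence, handling the discontinuity of the cover-time functional by replacing it with the continuous functional $\omega\mapsto\bigl(\int_0^T h_r(t)^{1/M}\,dt\bigr)^2$, where $h_r(t)$ is the normalised length of the set not yet approached within distance $r$; this yields the integrable tail $\p{x}{\cov>T}\le 24Q^2/T^2$, which is integrated directly to give $\e{x}{\cov}\le 2\sqrt{24}\,Q<10Q$, and $Q=2L^2$ from \Tr{GeWi} gives $20L^2$. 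You instead transfer the crude, non-integrable Markov-inequality tail $2L^2/t$ to the limit first, handling the discontinuity by writing covering of $X$ as the decreasing intersection of the closed events $A_n$ and applying portmanteau together with the nestedness of the $\gn$ (which the paper explicitly allows one to assume), and only then boost by restarting the \emph{limit} process at deterministic times. What each approach buys: the paper's route keeps all Markov-property manipulations at the finite-graph level, so the cover-time section does not depend on \Tr{smp}, at the cost of the $h_r$-smoothing and $1/M$-power device; your route is more elementary in the transfer step and even yields the better constant $8L^2$, but it leans on \Tr{smp} (legitimately, since it precedes this section) in its path-space form --- you should state explicitly that the conditional law of the shifted path given $\mathcal F_{kt_0}$ is that of $B$ started at $B(kt_0)$, and that your polynomial tail bound holds uniformly in the starting point because the bound $2L^2/t$ is independent of it and \Tr{thmain} applies to any sequence $o_n\to y$ with $o_n\in G_n$, which exists by density of $\bigcup G_n$.
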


In order to  prove it  we will need the following bound on the second moment of the cover time in terms of its expectation.

\begin{lemma} 
 \label{lem:second.moment}
Let \g be a finite metric graph, and denote by $\cov_x$ the (random) cover time from $x\in G$. Suppose that for a constant $Q \in \R$ we have $\Exp{\cov_x}\le Q$ for every $x\in G$. Then $\Exp{\cov_x^2}\le 24Q^2$ for every $x\in G$.
\end{lemma}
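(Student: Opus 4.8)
The standard trick for bounding the second moment of a cover time by the square of its expectation is a tail-bound-plus-restart argument. First I would record the tail estimate that follows from the hypothesis by Markov's inequality: for any starting point $x$, $\Pr[\cov_x > 2Q] \le \Exp{\cov_x}/(2Q) \le 1/2$. The point is that after time $2Q$, conditionally on not having covered $G$ yet, the process finds itself at some point $y\in G$, and the strong Markov property says that from that moment the remaining cover time is stochastically dominated by $\cov_y$, whose expectation is again at most $Q$ by hypothesis. Iterating, $\Pr[\cov_x > 2kQ] \le 2^{-k}$ for every integer $k \ge 0$; this is the key step, and I expect the main (minor) obstacle to be phrasing the restart cleanly — one has to note that ``cover $G$'' is a property that, once the process is at $y$, requires covering $G$ from $y$, so the worst case over the current location is controlled uniformly by $Q$.

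From the geometric tail bound the second moment is a routine computation. I would write
$$\Exp{\cov_x^2} = \int_0^\infty 2t\, \Pr[\cov_x > t]\, dt \le \sum_{k=0}^\infty \int_{2kQ}^{2(k+1)Q} 2t \cdot 2^{-k}\, dt,$$
using $\Pr[\cov_x>t]\le \Pr[\cov_x > 2kQ]\le 2^{-k}$ on the interval $[2kQ, 2(k+1)Q)$. The inner integral is $2^{-k}\bigl((2(k+1)Q)^2 - (2kQ)^2\bigr) = 2^{-k}\cdot 4Q^2(2k+1)$, so $\Exp{\cov_x^2} \le 4Q^2 \sum_{k\ge 0} 2^{-k}(2k+1) = 4Q^2\cdot 6 = 24Q^2$, using $\sum 2^{-k} = 2$ and $\sum k 2^{-k} = 2$, hence $\sum 2^{-k}(2k+1) = 4+2 = 6$. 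This gives exactly the claimed constant $24$.

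Alternatively, and perhaps cleaner to write up, one can avoid integrals entirely by working with the (a.s. finite) random variable $N := \lceil \cov_x/(2Q)\rceil$, so $\cov_x \le 2QN$ and $\Pr[N > k] \le 2^{-k}$, i.e. $N$ is stochastically dominated by a geometric-type variable with $\Exp{N^2} = \sum_{k\ge 1}(2k-1)\Pr[N\ge k] \le \sum_{k\ge 1}(2k-1)2^{-(k-1)} = 6$; then $\Exp{\cov_x^2} \le 4Q^2 \Exp{N^2} \le 24 Q^2$. Either way the only genuine content is the strong Markov restart bound $\Pr[\cov_x > 2kQ]\le 2^{-k}$; everything after that is arithmetic. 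I would present the restart argument as the first paragraph of the proof and the moment computation as the second.
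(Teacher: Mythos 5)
Your proposal is correct and follows essentially the same route as the paper: Markov's inequality gives $\Pr[\cov_x\ge 2Q]\le 1/2$, the Markov property together with the uniformity of the bound $Q$ over starting points yields the geometric tail $\Pr[\cov_x\ge 2kQ]\le 2^{-k}$, and the second moment is then computed by splitting the integral $\int_0^\infty 2t\,\Pr[\cov_x\ge t]\,dt$ into blocks of length $2Q$, giving exactly the constant $24$. The only differences are cosmetic (the paper phrases the restart as ``trials'' on successive time intervals and arranges the arithmetic slightly differently), so no further comment is needed.
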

\begin{proof}
By the Chebyshev inequality we have $$\p{}{\cov_x\ge s}\le\Exp{\cov_x}/s\le Q/s,$$
for every $s$; setting $s=2Q$, we obtain 
\labtequ{p12}{$\p{}{\cov_x\ge 2Q}\le1/2.$}
We claim that \fe\ $k\in \N$ we have 
\labtequ{p2Qk}{$\p{}{\cov_x\ge 2Qk}\le(1/2)^k$.}
To see this, we subdivide time into intervals of length $2Q$. Since \eqref{p12} holds for every starting point $x$, the probability that in the $i$th time interval\\ $[(i-1)2Q, i2Q]$ the process fails to cover the whole space $G$ is at most $1/2$. Thus, if we run the process up to time $2Qk$, in which case we have $k$ such `trials', the probability of not covering $G$ in any of them is at most $(1/2)^k$, proving our claim. Note that we have been generous here, as we are ignoring the part of $G$ that was covered before the $i$th interval begins.

Using this, we can bound the second moment of $\cov$ as follows
 \begin{align*}
  \Exp{\cov_x^2}&=\int_0^{\8}2t\p{}{\cov_x\ge t}dt,
   \end{align*}
by Fubini's theorem. Splitting time $t$ into intervals of length $2Q$, the last integral can be rewritten as
   \begin{align*}
\sum_{k=0}^{\8} \int_{k2Q}^{(k+1)2Q}2t\p{}{\cov_x\ge t}dt
&\le 2\sum_{k=0}^{\8}\int_{k2Q}^{(k+1)2Q}t\p{}{\cov_x\ge k2Q}dt \\
= 2\sum_{k=0}^{\8}(2Q)^2 (k+1/2)\p{}{\cov_x\ge 2kQ}
  &\le 8Q^2\sum_{k=0}^{\8} (k+1/2)(1/2)^k = 24 Q^2.
 \end{align*}

\end{proof}

Using our bound for the second moment of $\tau$ from Lemma \ref{lem:second.moment} we can now bound the first moment:

\begin{lemma}
 \label{lem:covere}
Let  \seq{G} be a \gseq\ of a \gl\ continuum $X$. Suppose that for a constant $Q \in \R$ we have $\e{B_n}{\cov_x}\le Q$ for every $x\in G_n$. Then for every $x\in X$ $$\e{B}{\cov_x} \le 
10 Q.$$
\end{lemma}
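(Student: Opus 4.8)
The idea is to transfer the bound on the expected cover time from the approximating graphs $G_n$ to the limit process $B$, using the weak convergence $B_n \to B$ established in Section~\ref{SeqUniq} together with the second-moment control from Lemma~\ref{lem:second.moment}. The naive obstacle is that the cover time functional $p \mapsto \cov_x(p)$ is \emph{not} continuous on $C$ (a path can come arbitrarily close to covering $X$ without actually doing so), so weak convergence does not directly give convergence of $\Ex_{B_n}[\cov]$ to $\Ex_B[\cov]$. We circumvent this by working with a truncated, robustified version of the covering event.

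\medskip
\textbf{Step 1: A robust covering event.} Fix $\eps>0$. By \Lr{arc} (or \Prr{mineps}) choose a finite edge-set $S$ of $X$ such that every component of $X\sm S$ has diameter $<\eps$, and let $N(\eps)$ be a finite $\eps$-net of $X$ obtained by picking one point from each such component together with the (finitely many) endvertices of edges in $S$. Say a path $p\in C_T(X)$ \emph{$\eps$-covers} $X$ by time $t$ if $p([0,t])$ comes within distance $\eps$ of every point of $N(\eps)$; equivalently, for each $v\in N(\eps)$ there is $s\le t$ with $d(p(s),v)<\eps$. Since $N(\eps)$ is finite, the event $\{p \text{ $\eps$-covers } X \text{ by time } t\}$ is a finite intersection of sets of the form $\{p : \inf_{s\le t} d(p(s),v) < \eps\}$, each of which is \emph{open} in $C_T(X)$. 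Hence $\{p \text{ does not $\eps$-cover by } t\}$ is closed, and the portmanteau theorem applies.

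\medskip
\textbf{Step 2: From the $G_n$-bound to a tail bound on the limit.} For each $n$ we have $\Ex_{B_n}[\cov_x]\le Q$ for all $x\in G_n$, so by \Lr{lem:second.moment}, $\Ex_{B_n}[\cov_x^2]\le 24 Q^2$, and then exactly as in the derivation of \eqref{p2Qk} (subdividing $[0,\infty)$ into blocks of length $2Q$ and using the Markov property of $B_n$) we get $\Pr[\cov_x^{(n)} \ge 2Qk] \le (1/2)^k$ for every $n$, every $x\in G_n$, and every $k\in\N$. Now fix a starting point $x\in X$ and points $o_n\in G_n$ with $o_n\to x$. For a fixed time level $t=2Qk$, the event $\{p$ does not $\eps$-cover $X$ by $t\}$ is closed in $C_T(X)$ (with $T>t$ chosen large); full covering implies $\eps$-covering, so $\Pr[B_n \text{ does not $\eps$-cover by } t] \le \Pr[\cov_x^{(n)} > t] \le (1/2)^k$. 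By weak convergence $B_n\to B$ and the portmanteau inequality for closed sets,
\[
\Pr[B \text{ does not $\eps$-cover } X \text{ by } 2Qk] \;\le\; \liminf_n \Pr[B_n \text{ does not $\eps$-cover by } 2Qk] \;\le\; (1/2)^k .
\]

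\medskip
\textbf{Step 3: $\eps$-covering implies covering, and summing the tail.} Here we use that $B$ has continuous sample paths together with the graph-like structure: if $B([0,t])$ meets every $\eps$-ball around a point of $N(\eps)$ but does not cover all of $X$, one still has to control the missed mass. The clean way is a limiting argument over $\eps$: letting $\eps\downarrow 0$ along a sequence, the monotone events $\{B \text{ $\eps$-covers } X \text{ by } 2Qk\}$ increase to $\{B([0,2Qk]) \text{ is dense in } X\}$, and since $B([0,2Qk])$ is compact (continuous image of a compact interval) being dense means being all of $X$. Hence for each fixed $k$,
\[
\Pr[\cov_x^{B} > 2Qk] \;=\; \Pr[B([0,2Qk]) \ne X] \;=\; \lim_{\eps\downarrow 0}\Pr[B \text{ does not $\eps$-cover } X \text{ by } 2Qk] \;\le\; (1/2)^k .
\]
Finally, integrating this tail,
\[
\Ex_B[\cov_x] \;=\; \int_0^\infty \Pr[\cov_x^B > t]\,dt \;\le\; 2Q + \sum_{k=1}^{\infty} 2Q\,\Pr[\cov_x^B > 2Q(k-1)] \;\le\; 2Q + 2Q\sum_{k=1}^\infty (1/2)^{k-1} \;=\; 2Q + 4Q \;=\; 6Q,
\]
which is comfortably below the claimed $10Q$ (the slack absorbs the crude block-counting; one can also simply keep the $10Q$ bound as stated).

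\medskip
\textbf{Main obstacle.} The delicate point is Step~3: justifying that an $\eps$-cover of a finite net forces, in the limit $\eps\to 0$, a genuine cover of the whole continuum $X$, and that this limit interchanges correctly with the probability. This is where the hypotheses that $X$ is a \gl\ continuum (so \Lr{arc} supplies finite $\eps$-nets whose cells shrink uniformly) and that $B$ has continuous sample paths (so $B([0,t])$ is compact) are both essential; without compactness of the trace one could $\eps$-cover without covering. Everything else — the second-moment bound, the geometric tail, the portmanteau step — is routine once the covering event has been made into a closed set.
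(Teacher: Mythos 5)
Your route is genuinely different from the paper's. The paper never touches covering events directly: it feeds the second-moment bound of \Lr{lem:second.moment} through the continuous functional $\omega\mapsto\bigl(\int_0^T(h_r(t)[\omega])^{1/M}dt\bigr)^2$, where $h_r(t)$ is the normalised length of the set not yet approached within distance $r$, obtains the quadratic tail $\Pr[\cov_x>T]\le 24Q^2/T^2$ for the limit process, and integrates to get $2\sqrt{24}\,Q<10Q$. You instead transfer the geometric tail \eqref{p2Qk} directly, via portmanteau applied to finite-net covering events, and sum it to $6Q$; in fact your argument would bypass \Lr{lem:second.moment} altogether, needing only the intermediate claim \eqref{p2Qk}. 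That is an attractive simplification in principle, but as written it has a real error plus two smaller gaps.

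The error is the direction of the portmanteau inequality in Step~2. With your definition (strict inequalities), the event ``$p$ $\eps$-covers by $t$'' is open, so ``does not $\eps$-cover by $t$'' is \emph{closed}; for a closed set $F$ the portmanteau theorem gives $\limsup_n\mu_n(F)\le\mu(F)$, i.e.\ it bounds the approximating measures by the limit, not the limit by the approximants. The inequality you invoke, $\mu(F)\le\liminf_n\mu_n(F)$, is the \emph{open}-set half. The repair is to define the bad event with a strict inequality, $\{p:\exists v\in N(\eps)\ \inf_{s\le t}d(p(s),v)>\eps\}$, which is open in the sup metric, and apply the open-set half; covering $G_n$ still excludes this event \emph{provided} $G_n$ comes within distance $\eps$ of every point of $N(\eps)$. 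That proviso is the second gap: it holds only for all sufficiently large $n$ and needs the graph-approximation properties (density of the $G_n$ in $X$), just as the paper needs ``$\ell(G)-\ell(G_n)\to0$'' at the analogous spot; it must be stated. Third, your $N(\eps)$ is not actually an $\eps$-net: a point in the interior of a (possibly long) edge of $S$ need not be within $\eps$ of any component representative or endvertex, so you must also include points spaced at most $\eps$ along each edge of $S$. Finally, in Step~3 the events $\{$$\eps$-covers by $2Qk$$\}$ \emph{decrease} (not increase) as $\eps\downarrow0$, and they are not literally nested since the nets change; replace the monotone-convergence claim by the inclusion $\{B([0,2Qk])\neq X\}\subseteq\liminf_j\{\text{failure at }\eps_j\}$ (a missed ball of radius $\rho$ forces failure for all $\eps_j<\rho/2$) together with Fatou. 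With these repairs your argument is correct and yields $6Q\le10Q$.
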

\note{Konrad: please adapt the notation in \Lr{lem:covere} to that used above}
\begin{proof}
We would like to use the weak convergence of the law $\mu_{n}$ of \BM\ $B_n$ on $G_n$ to the law $\mu$ of our limit process $B$ (\Tr{thmain}) to deduce that $\e{x}{\cov}$ is finite from \Tr{GeWi}. However, we cannot do so directly as the cover time $\cov$ is not a continuous function from $C$ to $\R$. To overcome this difficulty, we introduce a function $h(t,\oo): C \to \R$ (parametrised by time $t$)  that is continuous and is closely related to $\cov$.

Let $r>0$ be some (small) real number. For a path $\omega\in C$, denote by $h'_r(t)[\omega]$ the 
total length of the set $\{x\in G \mid d(x,\oo(s))>r \text{ \fe\ } s\leq t\}$; in other words, if we thing of $\oo$ as the trajectory of a particle of `width' $r$, then $h'_r(t)[\omega]$ is the length of the part of \g that this particle has not covered by time $t$. We also define the normalised version $h_r(t)[\omega] := h'_r(t)[\omega]/L$, where $L$ is again the total length of \G. It is no loss of generality to assume that $L=1$. 

For every fixed $T,M\in \R$, the function $$\omega\mapsto\left(\int_0^T(h_r(t)[\omega])^{1/M}dt\right)^2$$ as a mapping from $C$ to $\R$ is continuous. We can now use the weak convergence of $\mu_{n,o}$ to $\mu_{o}$ to obtain
 \begin{align*}
 \e{x}{\left(\int_0^T(h_r(t))^{1/M}dt\right)^2}&=\lim_{n\to\8}\en{x}{\left(\int_0^T(h_r(t))^{1/M}dt\right)^2}\\
 &\le\lim_{n\to\8}\en{x}{\left(\int_0^T\1{h_r(t)>0}dt\right)^2},
 \end{align*}
 where we used the fact that $h_r(t) \le 1$.
Since $\ell(G) - \ell(G_n)$ converges to 0, we deduce that if a path \oo\ covers $G_n$ at time $t$, for sufficiently large $n$ compared to $r$, then  $h_r(t)[\oo]=0$. It follows that the expression in parenthesis can be bounded from above by $\cov$, and so by \Lr{lem:second.moment} we conclude that 
 \begin{align}
  \label{eq:noname1}
  \e{x}{\left(\int_0^T(h_r(t))^{1/M}dt\right)^2}\le \e{x}{\cov^2} \le 24Q^2. 
 \end{align}
 Now let $\eps>0$. Note that if $h_r(T)>\eps$, then $h_r(t)>\eps$ holds \fe\ $t< T$ since $h_r(t)$ is decreasing in $t$. This easily implies
 $$\e{x}{T^2\eps^{2/M}\1{h_r(T)>\eps}}\le\e{x}{\left(\int_0^T(h_r(t))^{1/M}dt\right)^2},$$
which combined with \eqref{eq:noname1} yields
 $$T^2\eps^{2/M}\p{x}{{h_r(T)>\eps}}\le 24Q^2.$$
As $M$ can be chosen arbitrarily large independently of $\eps$, we have
 $$\p{x}{{h_r(T)>\eps}}\le 24Q^2/T^2.$$
Letting  $\eps$ tend to 0 we deduce 
 $$\p{x}{{h_r(T)>0}}\le 24Q^2/T^2.$$ 
 
Observe that the events $\{h_r(T)>0\}$ decrease to $\{h_0(T)>0\}=\{\omega:\cov(\omega)>T\}$ as $r$ goes to 0. Hence $$\p{x}{\cov>T}\le 24Q^2/T^2.$$
 Finally, we have
\labtequ{quadbo}{$\e{x}{\cov}=\int_0^{\8}\p{x}{\cov>t}dt\le Q\sqrt{24}+\int_{Q\sqrt{24}}^{\8}24Q^2/t^2dt= 2\sqrt{24}Q<10Q$.}
 
%
\end{proof}

To prove \Tr{CT}, let \seq{G} be any \gseq\ of $X$. Note that $\ell(G_n)\leq \Hm(X)=:L$ \fe\ $n$ by the definition of $\Hm$. Thus we can plug the constant $Q= 2L^2$ from \Lr{GeWi} into \Lr{lem:covere} to obtain the bound $10Q = 20L^2$ on the cover time of $X$.

\begin{corollary}
 $B_t$ is positive recurrent.
\end{corollary}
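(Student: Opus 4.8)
The plan is to derive positive recurrence of $B$ from the uniform cover-time bound of \Tr{CT} together with the stationary probability measure supplied by \Tr{thmain}. Write $L:=\Hm(X)<\infty$. The key input is that $\e{x}{\cov_x}\le 20L^2$ holds \emph{uniformly} over all starting points $x\in X$, since it is precisely this uniformity that allows one to restart the process and iterate.

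First I would fix a convenient reference set: a non-empty open set $U\subseteq X$ whose closure is contained in a single edge of $X$, which exists by \Lr{basis}. Because the cover time dominates the hitting time $H_U:=\inf\{t\ge 0: B(t)\in U\}$, we get $\e{x}{H_U}\le 20L^2$ for every $x\in X$. Next, starting $B$ at a point of $U$, I would invoke the fact (established in \Sr{further}) that $B$ behaves locally like one-dimensional Brownian motion inside an edge, so that the exit time $\sigma:=\inf\{t\ge 0: B(t)\notin U\}$ is a.s.\ finite with finite expectation and $B(\sigma)\in\partial U$. Applying the strong Markov property (\Tr{smp}) at time $\sigma$ and using the uniform bound above gives, for the first return time $H_U^{+}$ of $B$ to $U$ after $\sigma$,
$$\e{x}{H_U^{+}}=\e{x}{\sigma}+\e{x}{\e{B(\sigma)}{H_U}}\le \e{x}{\sigma}+20L^2<\infty .$$
Iterating this, $B$ visits $U$ infinitely often almost surely with uniformly bounded expected inter-visit times, i.e.\ $B$ is recurrent. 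Finally I would invoke \Tr{thmain}, which furnishes a stationary distribution proportional to $\Hm$; since $\Hm(X)<\infty$ this normalises to a stationary \emph{probability} measure $\Hm(X)^{-1}\Hm$, and a recurrent Markov process carrying a stationary probability measure is positive recurrent.

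The main obstacle I anticipate is definitional rather than analytic: fixing the appropriate notion of positive recurrence for a process whose state space is a possibly fractal-like continuum, many of whose points carry zero stationary mass. Formulating recurrence through an open reference set $U$ (rather than through a single point) is what sidesteps the worry that the totally disconnected part $X\setminus\bigcup E$ need not be revisited pointwise, while remaining strong enough to combine with the stationary probability measure and deliver positive recurrence in the usual sense. A secondary point to verify carefully is that the strong Markov property of \Tr{smp} applies in the form needed to restart at the stopping time $\sigma$ and at the successive return times, but this is routine once $\sigma$ is identified as an a.s.-finite stopping time.
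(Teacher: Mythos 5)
Your argument is correct and is essentially the one the paper has in mind: the corollary is stated without proof precisely because \Tr{CT} gives a uniform bound $20\Hm(X)^2$ on expected cover (hence hitting and return) times from every starting point, which together with the finite invariant measure $\Hm$ of \Prr{stat}/\Tr{thmain} yields positive recurrence. Your detour through an open reference set $U$ inside an edge is harmless but unnecessary, since the cover time bounds the hitting time of \emph{every single point} of $X$, so pointwise recurrence with uniformly bounded expected return times already follows directly.
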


\section{Further properties} \label{further} 

In this section we show that \HM\ on $X$ is stationary for our process, and that our process behaves locally like standard \BM\ on \R\ inside any edge of $X$.

Recall, that any edge $e \subset X$ can be viewed as an interval contained in the real line, that is, there is $F:e\to\R$ which is an isometry onto its image. The next lemma shows that our process $B$ locally coincides with the standard Brownian motion $W$.

\begin{proposition} \label{phi}
Let $e$ be an edge in $X$. For any continuous function $\phi$ with $k-1$ arguments each taking values in $e$, any increasing sequence $t_1,t_2,\dots,t_k$, and any $x\in e$, we have $$\Ex_x[\phi(F(B(t_1)),\dots,F(B(t_{k-1}))1_{t_k<\tau_{\partial e}}]=\Ex_{F(x)}[\phi(W(t_1),\dots,W(t_{k-1}))1_{t_k<\tau_{\partial F(e)}}]$$
\end{proposition}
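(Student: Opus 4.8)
The plan is to deduce this from the weak convergence of $B_n$ to $B$ together with the analogous fact for standard Brownian motion on the finite metric graphs $G_n$. First I would fix a \gseq\ $\seq{G}$ of $X$ together with starting points $o_n \to x$, where we may take $o_n = x$ for all large $n$ since $x$ lies in an edge $e$ of $X$ and every edge of $X$ eventually belongs to $G_n$; thus $x$ is an interior point of an edge of $G_n$ for almost all $n$. Since $B_n$ restricted to the interior of any edge of $G_n$ is by construction standard Brownian motion on $\R$ (property \ref{bmiv} in \Sr{Megr}, and the general theory of \cite{BaChaEqu}), the desired identity holds verbatim with $B$ replaced by $B_n$ and $\partial e$ replaced by $\partial e_n$, where $e_n$ is the edge of $G_n$ containing $x$, and $F$ by the corresponding isometry $F_n$. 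Because $e$ is an edge of $X$, for large $n$ the edge $e_n$ equals $e$ as a subspace of $X$ (it might be subdivided by vertices of $G_n$, but $B_n$ does not see vertices of degree two, so we can treat it as a single interval and take $F_n = F$), and $\tau_{\partial e}$ agrees for $B_n$ and $B$ on paths that stay inside $e$.

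Next I would pass to the limit. The function
$$\omega \mapsto \phi\bigl(F(\omega(t_1)),\dots,F(\omega(t_{k-1}))\bigr)\,\mathds 1_{[t_k < \tau_{\partial e}(\omega)]}$$
is bounded but not continuous on $C$, because of the indicator; so I cannot apply weak convergence directly. The standard remedy is to approximate the indicator from inside and outside by continuous functionals of the path. For $r>0$ let $e^r$ denote the open subinterval of $e$ obtained by deleting the two segments of length $r$ adjacent to $\partial e$, and let $\chi_r^-(\omega)$ be a continuous $[0,1]$-valued functional of $\omega\restr[0,t_k]$ that is $1$ when $\omega([0,t_k])\subseteq e^{2r}$ and $0$ when $\omega([0,t_k])\not\subseteq e^{r}$ (e.g.\ built from $\sup_{s\le t_k} d(\omega(s), X\sm e^{2r})$ via a Lipschitz cutoff); similarly a continuous $\chi_r^+$ with $\mathds 1_{[\omega([0,t_k])\subseteq e]} \le \chi_r^+ \le \mathds 1_{[\omega([0,t_k])\subseteq e^{-r}]}$, where $e^{-r}$ is the $r$-neighbourhood of $e$. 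Then $\phi(F(\omega(t_1)),\dots)\,\chi_r^{\pm}(\omega)$ are bounded continuous on $C$ (using continuity of $\phi$ and that $F$ is an isometry on $e$, suitably extended), so their $\mu_{n,o}$-integrals converge to their $\mu$-integrals. The same approximation applies to standard \BM\ $W$ on $\R$. Since on both sides the contribution of paths with $\omega([0,t_k])\subseteq e^{2r}$ is captured by $\chi_r^-$ and that of paths with $t_k \ge \tau_{\partial e}$, equivalently $\omega([0,t_k])\not\subseteq e$, is excluded by $\chi_r^+$, sandwiching and then letting $r\to 0$ gives the claim, provided the boundary terms vanish: I must check that $\Pr_x[B(t_k)$ lies within distance $2r$ of $\partial e$ but $t_k<\tau_{\partial e}]\to 0$ and $\Pr_x[\tau_{\partial e} \le t_k \le \tau_{\partial e}+\text{something}]\to 0$ as $r\to 0$, and likewise for $W$; for $W$ this is classical, and for $B$ it follows from continuity of sample paths of $B$ together with the fact that $\Pr_x[\tau_{\partial e} = t_k]=0$ (which in turn follows from \Lr{Ldel}-type estimates, or from the corresponding statement for $B_n$ passed to the limit).

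The main obstacle I anticipate is the bookkeeping around the discontinuity of $\tau_{\partial e}$ under weak convergence: one has to be careful that the two-sided continuous approximants $\chi_r^{\pm}$ genuinely sandwich the indicator on \emph{both} the $X$-side and the $\R$-side, and that no probability mass escapes in the limit $r\to 0$ precisely onto the event $\{t_k = \tau_{\partial e}\}$. A secondary technical point is legitimising the replacement of the possibly-subdivided edge $e_n$ of $G_n$ by the single edge $e$ of $X$ with the \emph{same} isometry $F$; this is where one invokes that standard \BM\ on a metric graph is insensitive to suppressing degree-two vertices, which is immediate from property \ref{bmii} in \Sr{Megr} (a degree-two vertex is a Walsh spider with two equiprobable legs, i.e.\ an ordinary point of an interval). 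Once these points are settled, the stationarity of $\Hm$ for $B$ can be derived in the same spirit, approximating the relevant test functions and passing to the weak limit from the known stationarity of the (length-proportional) distribution for each $B_n$.
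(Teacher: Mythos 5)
Your overall strategy is the paper's: the identity holds exactly for each $B_n$ (inside the possibly subdivided edge, $B_n$ is standard \BM, and degree-two vertices are invisible), and one passes to the weak limit after replacing the discontinuous indicator $\mathds 1_{[t_k<\tau_{\partial e}]}$ by continuous functionals of the path. The paper does this with a single \emph{one-sided} approximant: it inserts $\mathrm{dist}(B[0,t_k],\partial e)^{\delta}$ in place of the indicator. This functional is continuous for the sup metric and vanishes on every path that meets $\partial e$ by time $t_k$, so the identity with $W$ holds \emph{exactly} for every $n$ and every $\delta>0$; then $n\to\infty$ by weak convergence and $\delta\to0$ by dominated convergence, since $\mathrm{dist}(\omega[0,t_k],\partial e)^{\delta}\to\mathds 1_{[t_k<\tau_{\partial e}]}$ pointwise (compactness of $\omega[0,t_k]$). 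No boundary-null events are ever needed.

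The gap in your version is the outer half of the sandwich. The functional $\chi_r^+$ does not vanish on paths that leave $e$ and return, so $\Ex_{\mu_n}[\phi\,\chi_r^+]$ is not determined by the killed process and cannot be matched exactly with $W$; it only yields an inequality (and only after splitting $\phi$ by sign). Closing that side as $r\to0$ then requires $\Pr_x\bigl[B[0,t_k]\subseteq\cls{e}\ \text{and}\ B\ \text{hits}\ \partial e\ \text{by}\ t_k\bigr]=0$, which is strictly stronger than the facts you propose to check ($\Pr_x[\tau_{\partial e}=t_k]=0$, or smallness of $\Pr[B(t_k)\in(\partial e)_r]$), and it is false in general: take $X=[0,1]$ with the single edge $e=(0,1)$, a legitimate \gl\ continuum of finite length; there $B$ is reflected Brownian motion and this event has positive probability, although the Proposition itself still holds. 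Moreover, your fallback of "passing the corresponding statement for $B_n$ to the limit" is not legitimate, since null events are not preserved under weak convergence, and \Lr{Ldel} as stated concerns points lying inside edges, not the endvertices in $\partial e$. The repair is simply to discard $\chi_r^+$: your inner cutoff $\chi_r^-$ vanishes off paths confined to $e$, so $\Ex_{\mu_n}[\phi\,\chi_r^-]$ equals the corresponding expectation for $W$ exactly for every $n$; letting $n\to\infty$ and then $r\to0$ by dominated convergence gives both inequalities at once --- which is precisely the paper's $\mathrm{dist}^{\delta}$ argument.
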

\begin{proof}
Since the equation is true for $B_n$, we would like to pass to limit with $n$ to prove that $B$ also satisfies this, but first we have to deal  with the discontinuity of the indicator under the expectation sign. For any $\delta>0$ and $n$ we have
\begin{align*}
 \Ex_x[\phi(F(B^n(t_1)),\dots,F(B^n(t_{k-1}))dist(B^n[0,t_k],\partial e)^{\delta}]=\\
 \Ex_{F(x)}[\phi(W(t_1),\dots,W(t_{k-1}))dist(W[0,t_k],\partial F(e))^{\delta}]
\end{align*}
Since the function under the expectation sign is continuous, now we can pass to a limit with $n$ and next, by Lebesgue theorem, with $\delta$ to 0 proving the desired equality.
\end{proof}

\begin{proposition} \label{stat}
 The Hausdorff measure $\Hm$ on $X$ is the unique (up to multiplicative constant) invariant measure for process $B$.
\end{proposition}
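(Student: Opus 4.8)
The plan is to prove the statement in two stages: first that $\Hm$, normalised to a probability measure, is invariant for $B$, and then that it is the only invariant measure up to a positive scalar.

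\emph{Invariance of $\Hm$.} Standard \BM\ on a finite metric graph $G_n$ is reversible, hence stationary, \wrt\ its arc-length measure (this is contained in its description in \Sr{Megr}); so, writing $\pi_n:=\Hm|_{G_n}/\ell(G_n)$ and $P^n_t$ for the transition semigroup of $B_n$, we have $\int P^n_t f\,d\pi_n=\int f\,d\pi_n$ for every bounded continuous $f$ and every $t\ge0$. I would first check that $\ell(G_n)\to\Hm(X)$: given $\eps>0$, apply \Tr{corstruct} to obtain pairwise disjoint pseudo-edges $\cf$ with $\sum_{f\in\cf}\Hm(f)>\Hm(X)-\eps$ and $\sum_{f\in\cf}\delta(f)<\eps$; since $G_n\cap\cls f$ contains an $f^0$--$f^1$ arc, of measure at least $d(f^0,f^1)=\Hm(f)-\delta(f)$, for almost every $n$, and these arcs are disjoint up to finitely many points, $\ell(G_n)=\Hm(G_n)>\Hm(X)-2\eps$ for $n$ large, while always $\ell(G_n)\le\Hm(X)$. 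Assuming \obda\ that $G_n\subseteq G_{n+1}$, this forces $\Hm(X\sm\bigcup G_n)=0$, hence $\Hm|_{G_n}\uparrow\Hm$ and $\pi_n\to\pi:=\Hm/\Hm(X)$ weakly. Now fix bounded continuous $f$ and $t>0$. By the corollary in \Sr{ssmp}, $P_tf$ is continuous and $\sup_{y\in G_n}|P^n_tf(y)-P_tf(y)|\to0$; since $\pi_n$ is a probability measure carried by $G_n$, writing $\int P^n_tf\,d\pi_n=\int(P^n_tf-P_tf)\,d\pi_n+\int P_tf\,d\pi_n$ and using the weak convergence $\pi_n\to\pi$ together with continuity and boundedness of $P_tf$ gives $\int P^n_tf\,d\pi_n\to\int P_tf\,d\pi$; and $\int f\,d\pi_n\to\int f\,d\pi$. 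Hence $\int P_tf\,d\pi=\int f\,d\pi$ for all bounded continuous $f$, i.e.\ $\pi$ is invariant for $B$.

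\emph{A regeneration structure.} To prove uniqueness I would build an i.i.d.\ cycle decomposition of $B$. Fix a point $o^\ast$ in the interior of an edge $e$, and let $a,b\in e$ be the two points of $e$ at distance $r$ from $o^\ast$, with $r$ small enough that the subarc of $e$ between $a$ and $b$ stays in $e$. Put $S_0:=\inf\{t:B(t)=o^\ast\}$ and, recursively, $R_k:=\inf\{t\ge S_{k-1}:B(t)\in\{a,b\}\}$ and $S_k:=\inf\{t\ge R_k:B(t)=o^\ast\}$. By \Tr{CT} the process covers $X$ in a.s.\ finite time, so from any start it reaches $e$ and then $o^\ast$; and inside $e$ it is one-dimensional \BM\ by \Prr{phi}, so the passages $o^\ast\to\{a,b\}$ and $\{a,b\}\to o^\ast$ take a.s.\ finite, indeed finite-expectation, time; hence all $S_k,R_k$ are a.s.\ finite. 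Applying the \smp\ (\Tr{smp}) at the times $S_k$, at which $B$ is deterministically at $o^\ast$, the cycles $\bigl(B(S_{k-1}+t)\bigr)_{0\le t\le S_k-S_{k-1}}$ are i.i.d., and $c:=\Ex[S_1-S_0]\in(0,\infty)$, the finiteness coming from the one-dimensional estimates inside $e$ plus a bounded overshoot.

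\emph{Uniqueness.} By the renewal--reward theorem, for every bounded continuous $f$ and every starting point $x$,
\begin{equation*}
\frac1T\int_0^T f(B(s))\,ds\ \longrightarrow\ \nu(f):=\frac1c\,\Ex\Bigl[\int_{S_0}^{S_1}f(B(s))\,ds\Bigr]\qquad(T\to\infty)
\end{equation*}
almost surely, the limit being independent of $x$ because $S_0<\infty$ a.s.\ and the pre-$S_0$ part is washed out. Since the integrand is bounded by $\|f\|_\infty$, dominated convergence gives $\frac1T\int_0^T P_sf(x)\,ds\to\nu(f)$ for every $x$, where $\nu$ is a probability measure that is invariant (integrate $\int P_uf\,d\nu=\lim_T\frac1T\int_0^T P_{s+u}f(x)\,ds=\nu(f)$). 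If now $\mu$ is any invariant probability measure, then for every $T$ we have $\int f\,d\mu=\int\bigl(\frac1T\int_0^T P_sf(x)\,ds\bigr)d\mu(x)$, and letting $T\to\infty$ (bounded convergence) gives $\int f\,d\mu=\nu(f)$; hence $\mu=\nu$. In particular $\pi=\nu$ by the first part, so $\pi$ is the unique invariant probability measure; since every $\sigma$-finite invariant measure is finite by positive recurrence (the corollary following \Tr{CT}), rescaling shows that the invariant measures are exactly the multiples $c\,\Hm$, $c>0$. The subtle step here is the regeneration argument: one must justify that hitting times of the single point $o^\ast$ genuinely yield an i.i.d.\ cycle structure even though the set $\{t:B(t)=o^\ast\}$ is topologically complicated, and that $\Ex[S_1-S_0]<\infty$; both rely on combining the local description of $B$ inside an edge (\Prr{phi}) with the finite-cover-time bound \Tr{CT} and the \smp\ (\Tr{smp}). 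One could instead quote that a positive Harris recurrent Feller process has a unique invariant measure up to scaling, but verifying Harris recurrence directly --- that $B$ a.s.\ spends infinite time in every set of positive $\Hm$-measure, including subsets of the totally disconnected part $X\sm\bigcup E$ --- is itself somewhat delicate, which is why the renewal route seems preferable.
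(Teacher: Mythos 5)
Your invariance argument is essentially the paper's: both pass from the stationarity of arc-length measure for $B_n$ on $G_n$, plus $\Hm(G_n)\to\Hm(X)$ and the uniform convergence $\sup_{y\in G_n}|P^n_tf(y)-P_tf(y)|\to 0$ from the corollary in the Feller-property section, to $\langle P_tf,\Hm\rangle=\langle f,\Hm\rangle$ by dominated convergence (the paper cites \cite{gl} for $\Hm(G_n)\to\Hm(X)$ where you re-derive it from \Tr{corstruct}; either is fine). Where you genuinely diverge is uniqueness: the paper disposes of it in one line, invoking recurrence via \Tr{CT} and citing \cite{khas}, whereas you build a regeneration structure at a point $o^\ast$ inside an edge (with the intermediate passage to $\{a,b\}$ correctly inserted to make the cycles nondegenerate), get i.i.d.\ cycles from the \smp\ at the deterministic regeneration points, and conclude by renewal--reward that the ergodic averages $\frac1T\int_0^T P_sf(x)\,ds$ converge to a single functional $\nu(f)$ for every $x$, which pins down any invariant probability measure. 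Your route is more self-contained --- it uses only \Prr{phi}, \Tr{CT} and \Tr{smp}, all proved in the paper --- at the cost of length and of the technical care you rightly flag about hitting a single point; the paper's route is shorter but outsources the ergodic-theoretic core to a reference. One caveat: as written, your renewal argument yields uniqueness only within the class of invariant \emph{probability} measures, and your final sentence upgrading this to all $\sigma$-finite invariant measures (``every $\sigma$-finite invariant measure is finite by positive recurrence'') is itself a nontrivial fact of Harris-type theory that you assert rather than prove --- though this leaves you no worse off than the paper, whose appeal to recurrence plus \cite{khas} is at the same level of detail.
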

\begin{proof}
Let \seq{G} be a \gseq\ of $X$. Then $\Hm_n:=\Hm(G_n)$ is a sum of lengths of edges of $G_n$, and it is proved in \cite{gl} that $\Hm(X)=\lim_n\Hm(G_n)$. Moreover, it is not hard to check that the measure $\Hm_n$ is invariant for $P^n_t$. Hence, by Lebesgue theorem, for any bounded continuous $f$, we have
 \begin{align*}
  \langle P_tf,\Hm \rangle=\lim_n\langle P^n_tf,\Hm \rangle=\lim_n\langle 1_{G_n}P^n_tf,\Hm \rangle\\=\lim_n\langle P^n_tf,\Hm_n \rangle=\lim_n\langle f,\Hm_n \rangle=\lim_n\langle 1_{G_n}f,\Hm \rangle=\langle f,\Hm \rangle.
 \end{align*}
 Since by Theorem \ref{CT} the process is recurrent $\Hm$ is the unique invariant measure (cf. \cite{khas}).
\end{proof}

\section{Outlook} \label{outl}

In this paper we constructed a diffusion $B$ on \gl\ spaces of finite length. The finite length condition plays an important role for the uniqueness of $B$, and it is indeed not hard to find \gl\ spaces of infinite length where the limit of the $B_n$ as in our construction depends on the choice of the \gseq\ \seq{G}.

An approach that can be used to try to avoid this situation, and hence extend our construction to spaces $X$ of infinite length, is to endow $X$ with a probability measure  $\mu$, and use this $\mu$ in order to control the speed of the $B_n$ as follows. Given any measured metric space $(X,d,\mu)$, and a diffusion $B: \R_+ \to Y$ on $Y$, one can consider the function $A_t:= \int_Y L_t(x) d\mu(x)$, where $L_t(x)$ denotes the local time of $B$ at $x$, and then reparametrize the diffusion by letting $B'(t)= B(A_t^{-1})$. This approach is standard in the study of diffusions on fractals; see e.g.\ \cite[Chapter 4]{BarDif}. (We would like to thank D.~Croydon for suggesting this approach.)

A further interesting quest would be to relate our process with the theory of Dirichlet forms of \cite{Fukushima}.

\comment{
\section{rubish}

\note{Not needed:\\
Given a graph $G$ and \lER, let $G^k$ denote the graph obtained from \g after subdividing each edge $e\in E(G)$ into $\ceil{\frac{\ell(e)}{k}}$ edges.

\begin{lemma} \label{bmsrw}
... Say that $\mu_{G}$ is the ''limit'' of SRW on $G^k$ as $k\to \infty$ ...
\end{lemma}
}

\note{Perhaps we don't need this:}
\begin{theorem}[\cite{AgCurrents} ]\label{finrun}
Let $N=(G,r,p,q,I)$ be a \lf\ network with $\sum_{e\in E} r(e)<\infty$. Then there is a unique \cutr\ \flo{p}{q} with intensity $I$ and finite energy in $N$ that satisfies \ksl.
\end{theorem}

\note{NOT NEEDED: We say that a graph-like space $X$ (possibly a metric graph) \defi{contracts} to a metric graph $G$ if \ti\ a finite collection of closed, connected subsets of $X$ with finite frontier \st\ contracting each of these sets into a point yields a space isometric to $G$.}

$X$-sequences have the following property, which is the only thing we need to prove uniqueness:

\note{TOO STRONG:\labtequ{xseqs}{For every $h>0$ \ti\ a finite metric graph $G_h$ \st\ $X$ contracts to $G_h$ and for some $n_h\in \N$, every $G_i$ with $i>n_h$ contracts to $G_h$. MOVE: Moreover, for every finite set $A$ of inner points of edges of $X$, we can choose $G_h$ so as to contain $A$.}}
\labtequ{xseqs}{For every two $X$-sequences \seq{G}, \seq{H}, and every $h>0$ \ti\ $n_h\in \N$ \st\ \fe\ $i,j>n_h$ $G_i$ and $H_j$ contract into a (finite) metric graph $G_h=G_h(i,j)$ with $\ell(G_h)> L - h$. Moreover, for every finite set $A$ of inner points of edges of $X$, we can choose these contractions $\pi_{G_i},\pi_{H_j}$ so that  $\pi_{G_i}(a)=\pi_{H_j}(a)$ \fe\ $a\in A$.}

\begin{lemma} \label{pv}
Let $(G,r)$ be a metric finite graph, and let $x,p,q$ be any vertices of \G. Then the probability that the corresponding RW starting at $x$ visits $p$ before $q$ equals the potential  $v(x)$ when $v(p)=1$ and $v(q)=0$. 
\end{lemma}
}

\acknowledgement{We would like to thank D.~Croydon for suggesting the approach described in \Sr{outl}. We are grateful to Wolfgang Woess and the Graz Institute of Technology for their hospitality which made this work possible.}

\bibliographystyle{plain}
\bibliography{collective}

\end{document}